\newtheorem{thm}{Theorem}[section]
\newtheorem{prop}[thm]{Proposition}
\newtheorem{lem}[thm]{Lemma}
\newtheorem{cor}[thm]{Corollary}
\theoremstyle{definition}
\newtheorem{defi}[thm]{Definition}
\newtheorem{rem}[thm]{Remark}
\newtheorem{ex}[thm]{Example}
\newtheorem*{setting}{Setting}
\DeclareMathOperator{\supp}{supp}
\DeclareMathOperator{\card}{card}
\DeclareMathOperator{\diam}{diam}
\DeclareMathOperator{\Ker}{Ker}
\DeclareMathOperator{\spec}{sp}
\DeclareMathOperator{\spa}{span}
\newcommand{\Od}{\mbox{O}(d)}
\newcommand{\Zpo}{\mathbb{Z}_{>0}}
\newcommand{\Znn}{\mathbb{Z}_{\geq0}}
\newcommand{\calT}{\mathcal{T}}
\newcommand{\calA}{\mathcal{A}}
\newcommand{\calS}{\mathcal{S}}
\newcommand{\calP}{\mathcal{P}}
\newcommand{\calQ}{\mathcal{Q}}
\newcommand{\e}{\varepsilon}
\newcommand{\Rd}{\mathbb{R}^d}
\newcommand{\OmegaT}{X_{\mathcal{T}}}
\newcommand{\simt}{\sim_{t}}
\newcommand{\LD}{\overset{\mathrm{LD}}{\rightsquigarrow}}
\newcommand{\MLD}{\overset{\mathrm{MLD}}{\leftrightsquigarrow}}
\newcommand{\rhoT}{\rho_{\mathbb{T}}}
\title{Distribution of Patches in Tilings and Spectral Properties of Corresponding Dynamical Systems}
\author[1]{Yasushi Nagai}
\affil[1]{Keio Univ., 3-14-1 Hiyoshi, Kohoku-ku, Yokohama, Kanagawa 223-8522, Japan. (e-mail: nagai@math.keio.ac.jp)}
\date{\today}							
\begin{document}
\maketitle

\begin{abstract}
	A tiling is a cover of $\Rd$ by tiles such as polygons that overlap only on their borders. 
	A patch is a configuration consisting of finitely many tiles that appears in tilings.
	From a tiling, we can construct a dynamical system which encodes the nature of the tiling.
	In the literature, properties of this dynamical system were investigated by studying 
	how patches distribute in each tiling. In this article we conversely research distribution of patches from 
	properties of the corresponding dynamical systems.
	We show periodic structures are hidden in tilings which are not necessarily periodic.
	Our results throw light on inverse problem of deducing information of tilings from information of diffraction measures,
	in a quite general setting.
\end{abstract}

\section{Introduction}
	In 1984 Shechtman et al.\  discovered quasicrystals \cite{shechtman1984metallic}.
	The diffraction patterns of quasicrystals consist of sharp peaks, which show the long-range order of the solid, 
	but have rotational symmetries which crystals can never have. These phenomena 
	required a mathematical explanation and the study of mathematical structures which are non-periodic but
	have some degree of order has been intense since then. Such mathematical structures include for example 
	tilings, Delone sets and translation bounded measures. 
	
	The physical connection between solids and their diffraction images is modeled by considering the Fourier transforms (the diffraction measures) of
	 the autocorrelation measures
	defined by mathematical models such as Delone sets and translation bounded measures.
	There are many researches on this mathematical diffraction: \cite{hof1995diffraction}, \cite{MR2106769}, \cite{MR1798986}, to name a few.
	One of the central questions in the above context is the inverse problem: 
	can we deduce information of the original mathematical objects from information of its diffraction measure?
	(See for example \cite{lenz2011stationary}, \cite{terauds2013inverse}, \cite{terauds2013some}, \cite{grimm2008homometric}.)

	On the other hand, for the study of mathematical objects which show
	 long-range order,
	the study of the corresponding dynamical systems is important.
	(See for example \cite{Solomyak_dynamics}, \cite{So}, \cite{MR2851885}, to name a few.)
	The corresponding dynamical system consists of the closure of the set of all translations of given mathematical object and
	$\Rd$-action on it.
	There are relations between this dynamical system and the diffraction measure.
	For example, a translation bounded measure has a pure point diffraction measure
	if and only if the corresponding dynamical system has pure point spectrum (\cite{MR2106769}), that is,
	the set of eigenfunctions spans a dense subspace of $L^2(\mu)$, $\mu$ being
	an invariant probability measure for the dynamical system.
	
	The diffraction measure typically includes  information on the dynamical system: 
	for example, in \cite{MR2106769} it is shown that the set of measurable eigencharacters (Definition \ref{def_measurable_eigenfunction}) can be seen from 
	the diffraction measure, if the diffraction measure is pure point.
	Furthermore, in some situations all the measurable eigenchracters are topological eigencharacters (Definition \ref{Def-eigenfunction}).
	This is the case for pseudo-self-affine tilings (Theorem \ref{Thm-Solomyak}, Theorem \ref{thm_pseudo_affine}) and 
	repetitive regular model sets (\cite{hof1995diffraction}).
	Thus if we deduce a property on the original mathematical object
	from a property on the set of topological eigenvalues,  that will throw light on
	inverse problem in situations where topological and measurable eigencharacters are the same.
	
	In this article we research properties of tilings of $\mathbb{R}^d$ from properties of the set of topological eigenvalues.
	In the literature, many properties on dynamical system are deduced from the properties on distribution of patches in given tilings.
	The phrase ``distribution of patches'' is not rigorously defined, and may be captured in various ways, for example by
	the number $L(\calP_1, \calP_2)=\card\{x\in\mathbb{R}^d\mid \calP_1+x\subset\calP_2\}$
	 ($\card$ means cardinality, and both $\calP_1$ and $\calP_2$ are patches)
	as follows.
	For example, the corresponding topological dynamical system for a tiling $\calT$ is uniquely ergodic if and only if
	the tiling $\calT$ has uniform patch frequency, that is, for any finite patch $\calP$ there is a limit
	\begin{align*}
		\lim_{n\rightarrow\infty}\frac{L(\calP, \calT\cap A_n)}{m(A_n)}
	\end{align*}
	which is independent of the choice of van Hove sequence $(A_n)$ (\cite{MR1976605}, \cite{Solomyak_dynamics}) .
	Here $m$ is the Lebesgue measure and $\cap$ is defined in Definition \ref{def_cap_sqcap}.
	Also, for a self-affine tiling $\calT$, Solomyak \cite{Solomyak_dynamics} showed that if
	there is $c>0$ such that
	 for any finite patch $\calP$ and a return vector $x$
	we have
	\begin{align*}
		\lim_N \frac{L(\calP\cup(\calP+\phi^n(x)),\calT\cap A_N)}{m(A_N)}\geqq c\frac{L(\calP,\omega^n(T))}{\phi^nm(T)}		
	\end{align*}
	then the corresponding dynamical system is not mixing. 
	(This is the tiling version of Dekking-Keane's result \cite{MR0466485}.)
	For many self-affine tilings, the conditions on distribution of patches
	described by $L$ are first shown, and then the properties of the corresponding dynamical systems are deduced by the above argument.

	In this article we conversely deduce properties on distribution of patches in tilings
	from properties on the dynamical systems. 
	As to distribution of patches, we study it with respect to $\calT$-legality.
	If $\calT$ is a tiling, a patch $\calP$ is said to be $\calT$-legal if a translate of $\calP$ appears somewhere in $\calT$.
	If $\calT$ is repetitive, this is equivalent to $\liminf\frac{L(\calP,\calT\cap A_n)}{m(A_n)}>0$ for some van Hove sequence $(A_n)$.
	As to property of the dynamical systems, as we discussed above, we focus on the set of topological eigenvalues, in particular
	 the property that $0$ is a limit point of the set of eigenvalues.
	For certain class of mathematical models of quasicrystals, for example regular model sets and certain pseudo-self-affine tilings, it can be shown that
	$0$ is a limit point of the set of eigenvalues.
	We deduce, for general repetitive tilings of FLC and of finite tile type (Definition \ref{def_FTT}),
	 from the property that $0$ is a limit point of topological eigenvalues, a property on distribution of patches
	which is described by the term ``$\calT$-legal''.
	For pseudo-self-affine tilings with certain expansion map , more precise conditions are deduced.

	We elaborate. First, for tilings $\calT$
	 of finite tile type, we define a language $\Pi_{\calT,R,\calA}$ for all $R>0$ (Definition \ref{def_language}), which is analogous to the language for sequences 
	 (Definition \ref{def_language_for_sequences}).
	 This set is a collection of all patches in $\calT$ which can be seen from spherical windows of radius $R$.
	 In Theorem \ref{main_thm1}, under the assumption of finite tile type, FLC and repetitivity of tilings,
	we prove the following.
	Assume that $0$ is a limit point of the set of topological eigenvalues of the corresponding topological dynamical system.
	From these assumptions
	we deduce that
	for any bounded neighborhood $U$ of $0\in\mathbb{R}^d$,
	there are $R>0$ and $a\in\mathbb{R}^d\setminus\{0\}$ such that,
	the relative position of copies of two patches $\calP_1$ and $\calP_2$ in $\Pi_{\calT,R,\calA}$ 
	 is not free (Theorem \ref{main_thm1}).
	The relative position is not free in the sense that if we observe a translate of the first patch $\calP_1$ in the tiling then relatively from that position there
	is a periodic region where the second patch $\calP_2$ will never appear.
	The periodic region is a translate of a set $U+\Ker\chi_a$, where $\chi_a(x)=e^{2 \pi i\langle a,x\rangle}$.
	($\langle\cdot,\cdot\rangle$ is the standard inner product.)
	In other words, if $x$ is from that region then $\calP_1\cup(\calP_2+x)$ is not $\calT$-legal.
	The situation is depicted in Figure \ref{figure_main_thm} in page \pageref{figure_main_thm}.
	
%
	
	
	If the tilings are self-affine or more generally pseudo-self-affine (\cite{MR2183221}, \cite{MR1854103}), 
	we can say more (Section \ref{section_substitution_Pisot}).
	 Let $\calT$ be a pseudo-self-affine tiling.
	 Suppose the expansion map for $\calT$ is diagonalizable over $\mathbb{C}$ and all the eigenvalues
	 are algebraic conjugates of the same multiplicity. Suppose also that
	 the spectrum of the map forms a Pisot family.
	We prove that, 
	for any $R_0>0, \e>0$ and finite $F\subset B(0,\frac{1}{8R_0})$, there are $R>0$, $F'\subset \Rd\setminus\{0\}$, and $x_a(\calP)\in\Rd$ for each $a\in F'$ and 
	each patch $\calP\in\Pi_{\calT,R,\calA}$, such that the following conditions hold
	 (Theorem \ref{thm1_substitution}). 
	 First, $F$ and $F'$ are ``close with respect to $\e$'' (Definition \ref{def_epsilon_close}).
	 Second, relative position of two patches $\calP_1,\calP_2\in\Pi_{\calT,R,\calA}$ in $\calT$ is not free
	 in the sense that $(\calP_1+x_a(\calP_1))\cup(\calP_2+x_a(\calP_2)+\frac{1}{2\|a\|^2}a+y+z)$ is not $\calT$-legal for any $y\in\Ker\chi_a, z\in B(0,R_0)$.
	 Thus relative to $\calP_1$, there is a ``forbidden area'' for the appearance of $\calP_2$. The area is obtained by juxtaposing translates of periodic regions
	 $\Ker\chi_a+B(0,R_0)$, $a\in F'$. 
	 
	 By using self-similar structure of pseudo-self-affine tilings, we can see another condition for distribution of patches holds.
	 In Theorem \ref{thm2_substitution}, under the same assumption as above and mild assumption on pseudo-substitution rule,
	 we prove relative position of any two $\calT$-legal patches is not free.
	 The situation is depicted in Figure \ref{figure_introduction} in page
	\pageref{figure_introduction}. The differences from the case of general tilings (Theorem \ref{main_thm1}) in the above paragraph
	are; (1) we are now free from the constraint that the patches $\calP_1,\calP_2$ we
	deal with must be large enough, and (2) the area where $\calP_2$ never appears may be grid-like.

		\begin{figure}[h]
		\begin{center}
		\includegraphics[width=1.0\columnwidth]{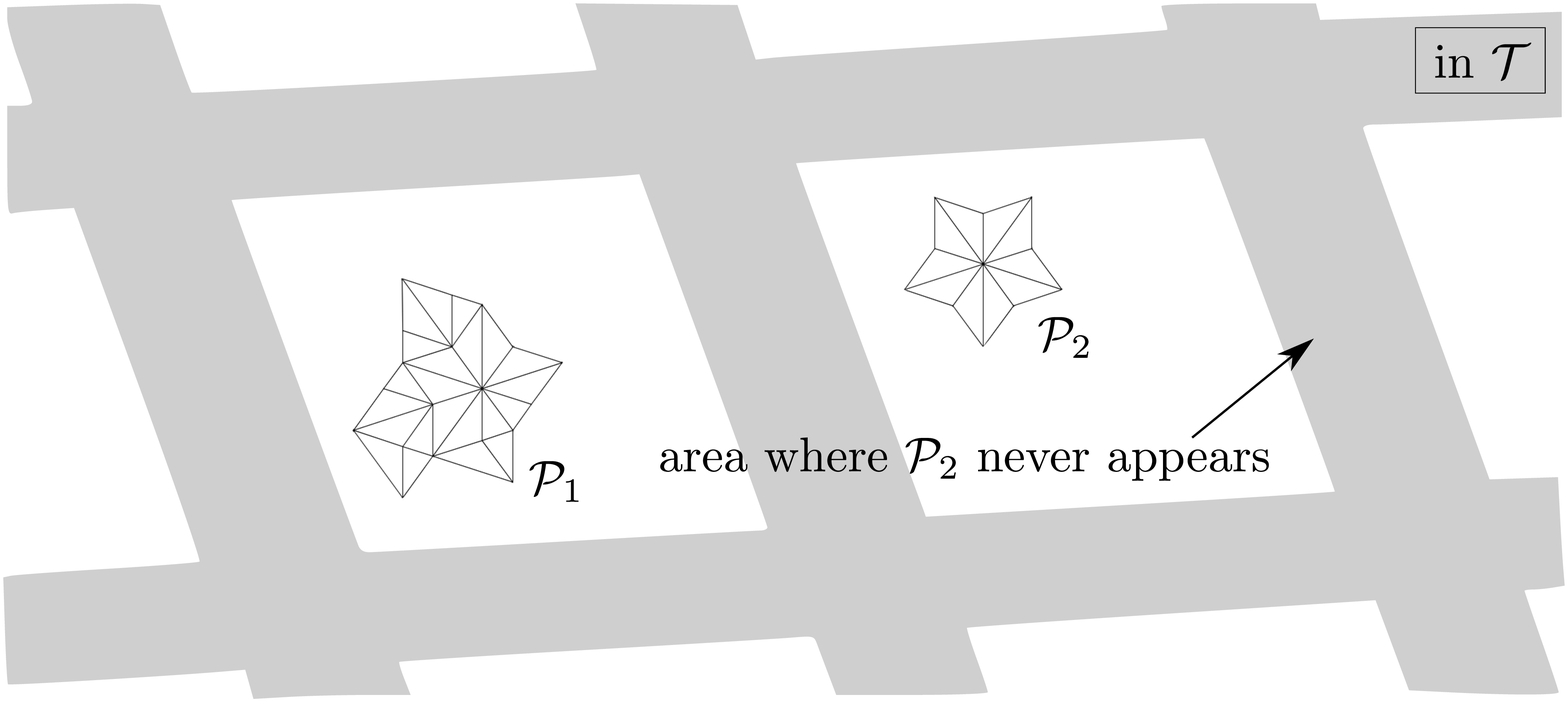}%
		\caption{Theorem \ref{thm2_substitution}}
		\label{figure_introduction}
		\end{center}
		\end{figure}

	On the other hand,
	 in sequences from word substitutions with weakly mixing dynamical systems, 
	 the analogous situation is not repeated. That is,
	 the relative position of words
	is 
	not free.
	See Proposition \ref{prop1_sequence_weak_mixing} and Remark \ref{rem_after_prop1_sequence}.

	Note that in Theorem \ref{main_thm1} and Theorem \ref{thm2_substitution}, periodic structures arise in the area where a patch $\calP_2$ never appears.
	That is, there are hidden symmetries in tilings that are not necessarily periodic (translationally symmetric).
	 Existence of eigenfunctions implies existence of factor maps $(X_{\calT},\Rd)\rightarrow (\mathbb{T},\Rd)$ where
	$\Rd$ acts on $\mathbb{T}=\{z\in\mathbb{C}\mid |z|=1\}$ by $x\cdot z=e^{2\pi i\langle x,a\rangle}z$.
	The dynamical system $(\mathbb{T},\Rd)$ has a wealth of symmetries (there are many $x\in\Rd$ such that $x\cdot z=z$ for any $z\in\mathbb{T}$)
	and these symmetries are propagated to those of $\calT$.

	Theorem \ref{thm2_substitution} is for certain pseudo-self-affine tilings.
	Theorem \ref{main_thm1} is for more general tilings.
	By recognizability (Theorem \ref{thm_recognizability}) and 
	Theorem \ref{thm_lee_solomyak}, Theorem \ref{main_thm1}  is applicable to tilings in the class of non-periodic 
	pseudo-self-affine tilings
	such that the expansion maps are diagonalizable over $\mathbb{C}$, all the eigenvalues are algebraic conjugates
	 of the same multiplicity, and the spectrum of each expansion map forms a Pisot family. 
	We do not rule out the possibility that these can be applicable to tilings in another class, such as Delone tilings for model sets.

	This article is organized as follows. Section \ref{section_def} is for preliminaries.
	Although some of these are known results, we do not omit the proofs for the reader's convenience, because it is hard to extract them from the literature.
	In Section \ref{section_general_tilings}, we deal with general tilings with sufficiently many eigenvalues, under mild assumptions.
	In Section \ref{section_substitution_Pisot} we deal with tilings constructed from tiling substitutions and pseudo-substitutions.
	In Section \ref{section_sequence_weak_mixing} we study sequences with weakly mixing dynamical systems.

\section{Preliminaries}\label{section_def}

	\subsection{Definition of Tiling}
	Throughout the article we set $\Zpo=\{1,2,\ldots\}$ and $\Znn=\{0,1,2,\ldots\}$.
	If $S$ is a set, $\card S$ denotes its cardinality.
	If $(X,\rho)$ is a metric space, $x\in X$ and $r>0$, we write the open ball of radius $r$ with its center $x$ by $B(x,r)$.
	For a metric space $(X,\rho)$ and $S\subset X$, its diameter is by definition $\diam S=\sup\{\rho (x,y)\mid x,y\in S\}$.
	For a topological space $X$ and its subset $A$, its closure, interior, and boundary are denoted by $\overline{A}, A^{\circ}$ and $\partial A$, respectively.

	\begin{defi}
		Take $d\in\Zpo$ and fix it.
		A tile of $\mathbb{R}^d$ is a subset of $\mathbb{R}^d$ which is nonempty, open and bounded.

		A patch of $\mathbb{R}^d$ is a set $\mathcal{P}$ of tiles of $\mathbb{R}^d$ such that, 
		if $S,T\in\mathcal{P}$ and $S\neq T$, then $S\cap T=\emptyset$.

		For a patch $\mathcal{P}$, its support is the subset $\overline{\bigcup_{T\in\mathcal{P}}T}$ of $\mathbb{R}^d$ and denoted by $\supp \mathcal{P}$.
		
		A patch $\mathcal{P}$ is called a tiling if $\supp\mathcal{P}=\mathbb{R}^d$.
	\end{defi}
	
	\begin{rem}\label{rem_definition_tile}
		In the literature, a tile is defined in various ways. For example, it is defined as
		(1) a subset of $\mathbb{R}^d$ which is homeomorphic to a closed unit ball of $\mathbb{R}^d$ (\cite{AP}),
		(2) a closed polygonal subset of $\mathbb{R}^d$ (\cite{Wh}), or
		(3) a subset of $\mathbb{R}^d$ which is compact and equal to the closure of its interior (\cite{BH}).
		
		In all these definitions tiles are defined as compact sets. However we can do the same argument regarding tilings by considering the interiors of tiles.
		This way has one virtue. Often in tiling theory one has to consider labels for tiles, so that one can distinguish tiles of the same shape. If we deal with interiors,
		instead of 
		 compact sets, we can remove different points from two copies of the same tile, so that we can distinguish these two tiles by these ``punctures'' and do not
		need to consider labels separately. (See Example \ref{ex_penrose}).
	\end{rem}
	

	\begin{defi}
		For a patch $\mathcal{P}$ and a vector $x\in\mathbb{R}^d$, define a translate of $\mathcal{P}$ by $x$ via
		$\mathcal{P}+x=\{T+x\mid T\in\mathcal{P}\}$. For two patches $\calP_1$ and $\calP_2$ set $\calP_1\simt\calP_2$ if there is $x\in\Rd$ such that
		$\calP_1+x=\calP_2$.
	\end{defi}

	\begin{defi}
		A tiling $\mathcal{T}$ is said to be sub-periodic if there is $x\in\mathbb{R}^d\setminus\{0\}$ such that its translate by $x$ coincide with itself, that is, 
		$\mathcal{T}+x=\mathcal{T}$. Otherwise a tiling is said to be non-periodic.
		A tiling $\calT$ of $\Rd$ is said to be periodic if there is a basis $\{b_1,b_2,\ldots ,b_d\}$ of $\Rd$ such that $\calT+b_i=\calT$ for all $i$.
	\end{defi}
	
	\begin{ex}[Square tiling]
		For any dimension $d\in\Zpo$, a tiling $\mathcal{T}_s=\{(0,1)^d+v\mid v\in\mathbb{Z}^d\}$ is called Square tiling.
		This is an example of periodic tiling.
	\end{ex}
	
	Many interesting examples of non-periodic tilings can be constructed from substitution rules, which we will introduce later.
	We also note that sequences can be regarded as tilings.
	
	The theme of this article is $\mathcal{T}$-legality.
	\begin{defi}\label{def_legal}
		Given a tiling $\mathcal{T}$, a patch $\mathcal{P}$ is $\mathcal{T}$-legal if there is $x\in\mathbb{R}^d$ such that
		$\mathcal{P}+x\subset\mathcal{T}$.
	\end{defi}

	There are many interesting examples of tilings which have only finitely many types of tiles, up to translation. In this article we confine our interest
	only to such tilings.
	\begin{defi}\label{def_FTT}
		Let $\mathcal{T}$ be a tiling. If there is a finite set $\mathcal{A}$ of tiles such that for any $T\in\mathcal{T}$ there are a unique $P\in\mathcal{A}$ and 
		(necessarily unique) $x\in\mathbb{R}^d$ for which $T=P+x$, then we say $\mathcal{T}$ has finite tile type. $\mathcal{A}$ is called a set of proto-tiles
		for $\mathcal{T}$.
		If $\mathcal{T}$ is a tiling of finite tile type, then we always take a set of proto-tiles $\mathcal{A}$ such that $0\in P$ for each $P\in\mathcal{A}$.
	\end{defi}
	
	\begin{defi}\label{def_language}
		For $R>0$ and a tiling $\mathcal{T}$ of finite tile type with a set of proto-tiles $\mathcal{A}$, set
		\begin{align*}
			\Pi_{\mathcal{T},R,\calA}=\{(\mathcal{T}-x)\cap B(0,R)\mid x\in\mathbb{R}^d, (\mathcal{T}-x)\cap\mathcal{A}\neq\emptyset\}.
		\end{align*}
		Also set $\Pi_{\calT,\calA}=\{\calP\mid\text{$\calP$ is a $\calT$-legal patch and $\calP\cap\mathcal{A}\neq\emptyset$.}\}$.
	\end{defi}	
	
	\begin{rem}
		This is a tiling-version of language for sequences (Definition \ref{def_language_for_sequences}).
	\end{rem}
	
	Next we introduce an important concept in tiling theory, which is called FLC (Definition \ref{def_FLC}). We prove two characterizations of FLC 
	(Lemma \ref{lem_FLC_iikae1} and Proposition \ref{FLC_iikae}).
	\begin{defi}\label{def_cap_sqcap}
		For a patch $\calP$ and a subset $S\subset \Rd$, set $\calP\cap S=\{T\in\calP\mid T\subset S\}$ and 
		$\calP\sqcap S=\{T\in\calP\mid \overline{T}\cap S\neq\emptyset \}$.
	\end{defi}
	
	\begin{defi}\label{def_FLC}
		A tiling $\mathcal{T}$ has finite local complexity (FLC) if for any $R>0$ the set $\{\mathcal{T}\cap B(x,R)\mid x\in\mathbb{R}^d\}/{\simt}$ is finite.
	\end{defi}
	
	\begin{lem}\label{lem_FTT_sqcap_finite}
		Let $\calT$ be a tiling of finite tile type and $S$ be a bounded subset of $\Rd$. Then there is an integer $N=N_{\calT, S}$ such that
		$\card\calT\sqcap (S+x)\leqq N$ for any $x\in\Rd$.
	\end{lem}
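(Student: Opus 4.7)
The plan is a standard volume-counting argument, made available by the finite tile type hypothesis. First I fix a set $\calA$ of proto-tiles with $0\in P$ for every $P\in\calA$. Since $\calA$ is finite and each proto-tile is bounded, we have
\begin{align*}
D=\max_{P\in\calA}\diam P<\infty, \qquad m_0=\min_{P\in\calA}m(P)>0,
\end{align*}
where $m$ denotes Lebesgue measure; positivity of $m_0$ uses that each proto-tile is a nonempty open subset of $\Rd$. Similarly, since $S$ is bounded, choose $R>0$ with $S\subset B(0,R)$.

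Next I would bound the location of any tile $T\in\calT\sqcap(S+x)$. Such a $T$ has the form $T=P+y$ with $P\in\calA$ and $y\in\Rd$, and by the normalization $0\in P$ we have $y\in T$. Pick $z\in\overline T\cap(S+x)\neq\emptyset$; then $\|y-z\|\leq\diam T\leq D$, and $z\in S+x\subset B(x,R)$, so $y\in B(x,R+D)$ and consequently $T=P+y\subset B(x,R+2D)$. Thus every tile in $\calT\sqcap(S+x)$ lies inside the ball $B(x,R+2D)$ whose volume does not depend on $x$.

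Finally I invoke disjointness. By the definition of a patch, distinct tiles in $\calT$ are disjoint (not merely interior-disjoint, since tiles are already open), so Lebesgue measures are additive:
\begin{align*}
m_0\cdot\card\bigl(\calT\sqcap(S+x)\bigr)\;\leq\;\sum_{T\in\calT\sqcap(S+x)}m(T)\;\leq\;m\bigl(B(x,R+2D)\bigr)\;=\;m\bigl(B(0,R+2D)\bigr).
\end{align*}
Dividing by $m_0$ gives a uniform bound $N=N_{\calT,S}=\lfloor m(B(0,R+2D))/m_0\rfloor$, independent of $x$, which is the desired conclusion.

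There is no real obstacle: the only delicate point is ensuring $m_0>0$, which is immediate since proto-tiles are nonempty open sets, and ensuring the counted tiles are genuinely disjoint, which is part of the definition of a patch. Everything else is a textbook volume comparison.
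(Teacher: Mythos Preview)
Your proof is correct and follows essentially the same volume-counting argument as the paper: bound all tiles of $\calT\sqcap(S+x)$ inside a fixed-radius ball and then count disjoint sets of a uniform minimum size. The only cosmetic difference is that the paper phrases the lower size bound via an inscribed ball of radius $s$ in each proto-tile rather than via the minimum Lebesgue measure $m_0$, and obtains the slightly tighter containment $T\subset B(x,R+D)$ instead of your $B(x,R+2D)$; neither point affects the argument.
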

	\begin{proof}
		Set $r=\max_{T\in\calT}\diam T$. Take $R>0$ such that $S\subset B(0,R)$. For any $x\in\Rd$, $\supp(\calT-x)\sqcap S$ is contained in $B(0,R+r)$.
		There is $s>0$ such that any tile in $\calT$ contains a ball of radius $s$. There is an integer $N$ such that $B(0,R+r)$ contains at most 
		$N$ mutually disjoint balls of radius $s$. It follows that $(\calT-x)\sqcap S$ contains at most $N$ tiles.
	\end{proof}

	\begin{lem}\label{lem_FLC_iikae1}
		For a tiling $\calT$ of finite tile type, the following conditions are equivalent:
		\begin{enumerate}
			\item $\calT$ has FLC.
			\item $\{\calT\sqcap B(x,R)\mid x\in\Rd\}/{\simt}$ is finite for all $R>0$.
		\end{enumerate}
	\end{lem}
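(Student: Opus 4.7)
The plan is to leverage finite tile type through the uniform diameter bound $r := \max_{P \in \calA} \diam P$ together with the nested inclusions
\[
(\calT - x) \cap B(0, R) \subset (\calT - x) \sqcap B(0, R) \subset (\calT - x) \cap B(0, R+r),
\]
valid for every $x \in \Rd$. The first inclusion is immediate from the definitions; the second follows because any tile whose closure meets $B(0,R)$ has diameter at most $r$ and hence is contained in $B(0, R+r)$. Each direction of the equivalence will be reduced to the other by invoking the hypothesis at a nearby radius and then describing the smaller patch as a sub-patch of a translate of a fixed representative.

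For $(1)\Rightarrow(2)$, I would fix $R > 0$, apply (1) at radius $R+r$ to extract finitely many representatives $\calQ_1, \ldots, \calQ_N$ of the $\simt$-classes of $\calT \cap B(x, R+r)$, and, for each $x$, choose $i = i(x)$ and $y = y(x) \in \Rd$ with $\calT \cap B(x, R+r) = \calQ_i + y$. Using the translation-compatibility identity $(\calP + y) \sqcap S = (\calP \sqcap (S - y)) + y$, a direct computation gives
\[
\calT \sqcap B(x, R) = (\calQ_i \sqcap B(x - y, R)) + y \simt \calQ_i \sqcap B(x - y, R).
\]
Since $\calQ_i$ is a finite patch, the sub-patch $\calQ_i \sqcap B(z, R)$ takes only finitely many values as $z$ ranges over $\Rd$, so only finitely many $\simt$-classes of $\calT \sqcap B(x, R)$ arise in total.

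The direction $(2)\Rightarrow(1)$ is symmetric: apply (2) at radius $R$, pick representatives $\calQ_1, \ldots, \calQ_N$ of the $\simt$-classes of $\calT \sqcap B(x, R)$, write $\calT \sqcap B(x, R) = \calQ_i + y$, and use
\[
\calT \cap B(x, R) = (\calQ_i \cap B(x - y, R)) + y \simt \calQ_i \cap B(x - y, R),
\]
again a sub-patch of the finite patch $\calQ_i$, hence with only finitely many possibilities.

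The main obstacle is really bookkeeping around the translation $y$: a priori $y$ is unconstrained in the equivalence $\simt$, so one might worry that uncountably many sub-patches $\calQ_i \sqcap B(x-y, R)$ could occur. The rescue is that each $\calQ_i$ has only finitely many tiles (Lemma~\ref{lem_FTT_sqcap_finite}), so every sub-patch is contained in a fixed finite set and its number of possibilities is bounded by $2^{\card \calQ_i}$ regardless of $y$. Verifying the translation-compatibility identities for $\sqcap$ and $\cap$ is a routine check I would only record briefly.
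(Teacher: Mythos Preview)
Your proof is correct and follows essentially the same route as the paper: both use the inclusion $\calT\sqcap B(x,R)\subset\calT\cap B(x,R+r)$ together with Lemma~\ref{lem_FTT_sqcap_finite}, and then argue that only finitely many sub-patches of a fixed finite patch can occur. The only difference is packaging: the paper isolates the ``sub-patch of a finite representative'' step as Lemma~\ref{lem_family_patch_finite}, whereas you carry it out by hand via the bound $2^{\card\calQ_i}$.
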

	
	Note that if $r=\max_{T\in\calT}\diam T$, then $\calT\sqcap B(x,R)\subset\calT\cap B(x,R+r)$. The proof for Lemma \ref{lem_FLC_iikae1} is straightforward
	if we use Lemma \ref{lem_FTT_sqcap_finite} and the following lemma.
	\begin{lem}\label{lem_family_patch_finite}
		Let $\Pi_0,\Pi_1$ be families of patches such that (1) any $\calP\in\Pi_1$ is finite, (2) for any $\calP_0\in\Pi_0$ there are
		$\calP_1\in\Pi_1$ and  $x\in\Rd$ for which $\calP_0+x\subset\calP_1$, and (3) $\Pi_1/{\simt}$ is finite.
		Then $\Pi_0/{\simt}$ is finite.
	\end{lem}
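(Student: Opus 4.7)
The plan is to use condition (3) to reduce every patch in $\Pi_0$ (up to translation) to a subset of one of finitely many fixed representatives of $\Pi_1/{\simt}$, and then invoke condition (1) so that each representative admits only finitely many subsets in total.

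First I would fix representatives $\calQ_1, \calQ_2, \ldots, \calQ_n$ for the $\simt$-equivalence classes of $\Pi_1$, which is possible by (3). Given any $\calP_0 \in \Pi_0$, condition (2) furnishes some $\calP_1 \in \Pi_1$ and some $x \in \Rd$ with $\calP_0 + x \subset \calP_1$. By the choice of representatives, there exist an index $i$ and a vector $y \in \Rd$ such that $\calP_1 + y = \calQ_i$. Translating the inclusion $\calP_0 + x \subset \calP_1$ by $y$ yields $\calP_0 + (x+y) \subset \calQ_i$. Hence every $\calP_0 \in \Pi_0$ is $\simt$-equivalent to some subset of one of the patches $\calQ_1, \ldots, \calQ_n$.

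Next, since each $\calQ_i$ is finite by (1), the collection $\bigcup_{i=1}^{n} 2^{\calQ_i}$ of all subsets of the $\calQ_i$'s is a finite set of patches. The preceding paragraph shows that every $\simt$-class in $\Pi_0$ has a representative in this finite collection, which gives the desired finiteness of $\Pi_0/{\simt}$.

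There is no real obstacle here; the only care required is to check that the transitive composition of the two translations (first $x$ witnessing $\calP_0 + x \subset \calP_1$, then $y$ witnessing $\calP_1 + y = \calQ_i$) still realizes $\calP_0$ as a translate of a subset of $\calQ_i$, which is immediate from the compatibility of $+$ with set inclusion. Note in particular that finiteness of $\calP_0$ itself is not assumed but is forced, since a subset of the finite patch $\calP_1$ must be finite.
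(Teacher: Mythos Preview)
Your argument is correct. The paper states this lemma without proof, treating it as elementary; your write-up supplies exactly the natural argument one would expect (choose finitely many representatives of $\Pi_1/{\simt}$, translate each $\calP_0$ into one of them, and use finiteness of the power sets), so there is nothing to compare beyond noting that you have filled in the details the paper omits.
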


	The following another characterization of FLC will be useful. Recall Definition \ref{def_language}.
	
	\begin{prop}\label{FLC_iikae}
		Let $\mathcal{T}$ be a tiling of finite tile type with a set of proto-tiles $\mathcal{A}$. Then the following conditions are
		equivalent:
		\begin{enumerate}
			\item $\mathcal{T}$ has FLC.
			\item $\Pi_{\calT,R,\calA}$ is finite for all $R>0$.
		\end{enumerate}
	\end{prop}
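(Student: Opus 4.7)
The plan is to prove both implications by exploiting the fact that every element of $\Pi_{\calT,R,\calA}$ is, up to translation, a patch of the form $\calT\cap B(x,R)$: from the identity $(\calT-x)\cap B(0,R)=(\calT\cap B(x,R))-x$ one reads off a canonical surjection $\Pi_{\calT,R,\calA}/{\simt}\to\{\calT\cap B(x,R)\mid x\in\Rd\}/{\simt}$, and this is the bridge between the two conditions.

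For $(1)\Rightarrow(2)$, I would start from FLC, which gives finitely many $\simt$-classes among the patches $\calT\cap B(x,R)$; by the bridge above, $\Pi_{\calT,R,\calA}$ splits into finitely many $\simt$-classes. It remains to show each such class meets $\Pi_{\calT,R,\calA}$ in only finitely many elements. Fix a class represented by $\calQ$: any other translate $\calQ+y$ lies in $\Pi_{\calT,R,\calA}$ only if some tile $T\in\calQ$ satisfies $T+y\in\calA$, and by finite tile type this forces $y$ to be a prescribed function of $T$. Since $\calQ$ has at most $N=N_{\calT,B(0,R)}$ tiles by Lemma \ref{lem_FTT_sqcap_finite}, there are at most $N$ admissible $y$, hence each class contributes only finitely many patches.

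For $(2)\Rightarrow(1)$, set $r=\max_{P\in\calA}\diam P$ and $R'=R+r$. I would invoke Lemma \ref{lem_family_patch_finite} with $\Pi_0=\{\calT\cap B(x,R)\mid x\in\Rd\}$ and $\Pi_1=\Pi_{\calT,R',\calA}$: $\Pi_1$ is finite by hypothesis, and each of its elements is finite by Lemma \ref{lem_FTT_sqcap_finite}. Given $\calP_0=\calT\cap B(x,R)$, choose $T_0\in\calT$ with $x\in\overline{T_0}$ and write $T_0=P+y$ with $P\in\calA$, $0\in P$; then $|x-y|\leq r$, so
\begin{equation*}
\calP_0-y\subset(\calT-y)\cap B(0,R+r)=(\calT-y)\cap B(0,R')\in\Pi_1,
\end{equation*}
verifying hypothesis (2) of Lemma \ref{lem_family_patch_finite}. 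The lemma then gives $\Pi_0/{\simt}$ finite, i.e., condition (2) of Lemma \ref{lem_FLC_iikae1}, which is equivalent to FLC.

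The only slightly delicate point is the multiplicity in $(1)\Rightarrow(2)$: because $\Pi_{\calT,R,\calA}$ distinguishes patches that differ only by a translation placing a different proto-tile puncture at the origin, one cannot merely quote FLC but must bound, via Lemma \ref{lem_FTT_sqcap_finite} and the uniqueness clause in the definition of finite tile type, the number of punctures available inside any $\simt$-class. Apart from that bookkeeping, the argument is routine.
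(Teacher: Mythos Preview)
Your argument follows the same two-step strategy as the paper's (finitely many $\simt$-classes, then finitely many representatives per class for $(1)\Rightarrow(2)$; embed each ball-patch into an element of $\Pi_{\calT,R',\calA}$ and invoke Lemma~\ref{lem_family_patch_finite} for $(2)\Rightarrow(1)$), and the $(2)\Rightarrow(1)$ direction is fine as written.

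There is one genuine omission in $(1)\Rightarrow(2)$. Your key claim---that $\calQ+y\in\Pi_{\calT,R,\calA}$ forces some $T\in\calQ$ with $T+y\in\calA$---relies on the anchoring proto-tile $P\in(\calT-x')\cap\calA$ actually lying inside $B(0,R)$, so that $P\in\calQ+y$. This is guaranteed only when $R>\max_{P\in\calA}\diam P$; for smaller $R$ the proto-tile witnessing $(\calT-x')\cap\calA\neq\emptyset$ may fail to be contained in $B(0,R)$, and then no tile of $\calQ+y$ need be a proto-tile. The paper fixes this with a one-line reduction: the restriction $\calP\mapsto\calP\cap B(0,R')$ gives a surjection $\Pi_{\calT,R,\calA}\to\Pi_{\calT,R',\calA}$ for $R'<R$, so it suffices to prove finiteness for large $R$. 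Add that reduction and your argument goes through. (A terminological slip: the ``bridge'' map $\Pi_{\calT,R,\calA}/{\simt}\to\{\calT\cap B(x,R)\mid x\in\Rd\}/{\simt}$ is an \emph{injection}, not a surjection; fortunately injectivity is exactly what you use.)
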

	
	\begin{proof}
		(1)$\Rightarrow$ (2): 
		It is enough to show that $\Pi_{\calT,R,\calA}$ is finite for any $R>\max_{P\in\mathcal{A}}\diam P$ since
		there is a surjection $\Pi_{\calT,R,\calA}\rightarrow\Pi_{\calT,R',\calA}$ for $R'<R$.
		
		Suppose $R>\max_{P\in\mathcal{A}}\diam P$.
		It suffices to show for each $\calP\in\Pi_{\calT,R,\calA}$, the set $Z_{\calP}=\{z\in\Rd\mid \calP+z\in\Pi_{\calT,R,\calA}\}$ is finite since 
		$\Pi_{\calT,R,\calA}/{\simt}$ is
		finite by (1). Take $\calP\in\Pi_{\calT,R,\calA}$. Define a map $\varphi\colon Z_{\calP}\rightarrow\calP$ as follows. For $z\in Z_{\calP}$, there is a unique
		$P\in(\calP+z)\cap\mathcal{A}$. In fact, there are
		 $y\in\Rd$ and $P\in\mathcal{A}$ such that $\calP+z=(\calT-y)\cap B(0,R)$ and $P\in (\calT-y)\cap\mathcal{A}$.
		Since we assumed $R$ was large enough, $P\in\calP +z$. Set $\varphi(z)=P-z$. This map $\varphi$ is injective. In fact, if $\varphi(z_1)=\varphi(z_2)$,
		take $P_i\in(\calP+z_i)\cap\mathcal{A}$. Then $P_1-z_1=P_2-z_2$ and so $P_1=P_2$ and $z_1=z_2$. Since $\calP$ is finite, we see $Z_{\calP}$ is finite.

		(2)$\Rightarrow$(1):
		Take $R>0$. Set $\Pi_1=\{(\calT-x)\cap B(0,R)\mid x\in\Rd\}$ and we show $\Pi_1/{\simt}$ is finite. Take $L>\max \{R,\max_{P\in\mathcal{A}}\diam P\}$ 
		and set
		$\Pi_2=\{(\calT-x)\cap B(0,L)\mid x\in\Rd\}$. By Lemma \ref{lem_family_patch_finite} and Lemma 
		\ref{lem_FTT_sqcap_finite}, it suffices to show $\Pi_2/{\simt}$ is finite. Take
		$(\calT-x)\cap B(0,L)\in\Pi_2$. Choose $P\in\mathcal{A}$ and $y\in\Rd$ such that $P+y\in (\calT-x)\cap B(0,L)$. Then $\|y\|<L$ and 
		$P\in\calT-x-y$. We have
		\begin{align*}
			(\calT-x)\cap B(0,L)-y=(\calT-x-y)\cap B(-y,L)\subset (\calT-x-y)\cap B(0,2L).
		\end{align*}
		We have proved that, for any $\calP\in\Pi_2$ there are $y\in\Rd$ and $\calP'\in\Pi_{\calT,2L,\calA}$ such that $\calP-y\subset\calP'$.
		$\Pi_{\calT,2L,\calA}/{\simt}$
		 is finite since $\Pi_{\calT,2L,\calA}$ is finite by assumption. Therefore by Lemma \ref{lem_family_patch_finite}, $\Pi_2/{\simt}$ is finite.
	\end{proof}

	\subsection{Tiling Space and Tiling Dynamical Systems}
	To study the nature of tilings, researching corresponding continuous hulls and tiling dynamical systems is useful. Let $\|\cdot\|$ be the 
	standard $2$-norm of $\mathbb{R}^d$. 

	\begin{defi}
		The set of all patches of $\mathbb{R}^d$ is denoted by $\Omega(\mathbb{R}^d)$.
	\end{defi}

	First we define a metric on $\Omega(\mathbb{R}^d)$, which is based on a well-known idea: to regard two patches close if,
	 after small translation,
	they precisely agree on a large ball about the origin.

	Recall that for a patch $\mathcal{P}$ and $S\subset\mathbb{R}^d$, we set $\mathcal{P}\cap S=\{T\in\mathcal{P}\mid T\subset S\}$.
	For two patches $\mathcal{P}_1,\mathcal{P}_2$ of $\mathbb{R}^d$, set
	
	\begin{align*}
		\Delta(\mathcal{P}_1, \mathcal{P}_2)=\biggl\{0<r<\frac{1}{\sqrt{2}}\mid &\text{there exist $x, y\in B(0,r)$ such that}\\
		&(\mathcal{P}_1+x)\cap B(0,\frac{1}{r})=(\mathcal{P}_2+y)\cap B(0,\frac{1}{r})\biggr\}.
	\end{align*}
	
	Then define
	\begin{align}\label{def_metric}
		\rho(\mathcal{P}_1, \mathcal{P}_2)=\inf(\Delta(\mathcal{P}_1, \mathcal{P}_2)\cup\{\frac{1}{\sqrt{2}}\}).
	\end{align}

	\begin{rem}
		It is tempting in the definition of tiling metric to replace $\Delta(\calP_1,\calP_2)$ above with 
		\begin{align*}
			\{0<r<\frac{1}{\sqrt{2}}\mid \text{there exists $y\in B(0,r)$ such that 
		$\mathcal{P}_1\cap B(0,\frac{1}{r})=(\mathcal{P}_2+y)\cap B(0,\frac{1}{r})$}\}
		\end{align*}
		because this definition seems to simplify the following proofs. However if we define the function $\rho$ in this way $\rho$ does not become a metric;
		it is not necessarily true that $\rho(\calT_1,\calT_2)=\rho(\calT_2,\calT_1)$ for two tilings $\calT_1$ and $\calT_2$. Here is an easy counterexample;
		take small $r>0$, and consider three copies of a tile $(-1,1)^d$. Give each of them a puncture in three different ways so that we obtain
		three different tiles (or equivalently, put three different labels to each of the copies so that we can distinguish them). Let
		$S,T,U$ denote the three tiles. Set 
		$\calT_1=\{S\}\cup(\{T\}+2\mathbb{Z}^d\setminus\{0\})$ and $\calT_2=(\{S\}\cup(\{U\}+2\mathbb{Z}^d\setminus\{0\}))+(r,0,0,\ldots 0)$. Then
		$\rho(\calT_1,\calT_2)=\frac{1}{\sqrt{8+d}}$ and $\rho(\calT_2,\calT_1)=\frac{1}{\sqrt{(3-r)^2+d-1}}$.
	\end{rem}
	
	
	It is easy to prove that $\rho$ in (\ref{def_metric}) is a metric on $\Omega(\mathbb{R}^d)$. To prove 
	$\rho(\calT_1,\calT_2)=0$ implies $\calT_1=\calT_2$, we use the following lemma.
	
	\begin{lem}
		Let $T$ be a tile and $\calP$ be a patch. Suppose $x_1,x_2,\cdots$ are elements of $\Rd$ such that 
		$x_n\rightarrow 0$ as $n\rightarrow\infty$ and $T+x_n\in\calP$ for all $n$. Then $T\in\calP$.
	\end{lem}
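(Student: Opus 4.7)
The plan is to show that the sequence of tiles $(T+x_n)$ is eventually constant, and that this constant tile must be $T$ itself; then membership of $T$ in $\calP$ follows.

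First, I would fix a point $p\in T$ (recall $T$ is nonempty) and, using that $T$ is open, choose $\e>0$ with $B(p,\e)\subset T$. Since $x_n\to 0$, pick $N$ so large that $\|x_n\|<\e/2$ for all $n\geqq N$. The goal of this first step is to get room to compare translates of $T$ by small vectors.

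Second, I would show that $T+x_n=T+x_m$ for all $n,m\geqq N$. The idea is to exhibit an explicit point of intersection: consider $q=p+x_n$. Trivially $q\in T+x_n$. Moreover $q\in T+x_m$ because $q-x_m=p+(x_n-x_m)$ lies in $B(p,\e)\subset T$, using $\|x_n-x_m\|\leqq\|x_n\|+\|x_m\|<\e$. Both $T+x_n$ and $T+x_m$ belong to $\calP$, so by the patch disjointness axiom (distinct tiles in a patch are disjoint) we must have $T+x_n=T+x_m$.

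Third, having shown $T+(x_n-x_N)=T$ for every $n\geqq N$, I would argue $x_n-x_N=0$ by noting that a nonempty bounded subset of $\Rd$ admits no nonzero translational period: if $y\neq 0$ satisfied $T+y=T$, then iterating would give $p+ky\in T$ for all $k\in\mathbb{Z}$, contradicting boundedness. Hence $x_n=x_N$ for all $n\geqq N$, and combined with $x_n\to 0$ this forces $x_N=0$, so $T=T+x_N\in\calP$.

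The argument has no real obstacle; the only thing to watch is that one cannot use disjointness between $T$ and $T+x_n$ directly (since $T\in\calP$ is exactly what we are trying to prove), which is why I compare two translates $T+x_n$ and $T+x_m$ both known to lie in $\calP$ instead.
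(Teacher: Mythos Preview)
Your proof is correct. The paper does not actually supply a proof of this lemma; it is stated as an auxiliary fact used to verify that $\rho(\calT_1,\calT_2)=0$ implies $\calT_1=\calT_2$, and the argument is left to the reader. Your approach---using disjointness of distinct tiles in $\calP$ to force $T+x_n=T+x_m$ for large $n,m$, then invoking boundedness of $T$ to rule out a nonzero period and conclude $x_n$ is eventually zero---is a clean and complete way to fill this gap, and your care in comparing two translates known to lie in $\calP$ (rather than $T$ itself) is exactly the right move.
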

	
	To prove the triangle inequality, one has to use the fact that $\frac{1}{\e}-\eta>\frac{1}{\e+\eta}$ whenever $0<\e<\frac{1}{\sqrt{2}}$ and 
	$0<\eta<\frac{1}{\sqrt{2}}$.

	\begin{prop}\label{prop_complete}
		The metric space $(\Omega(\mathbb{R}^d), \rho)$ is complete.
	\end{prop}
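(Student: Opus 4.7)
The plan is to take a Cauchy sequence $(\calP_n)$ in $\Omega(\mathbb{R}^d)$, extract a rapidly convergent subsequence, and use the matching conditions encoded in the definition of $\Delta$ to build a limit patch by a ``stabilization on every compact set'' argument. Since a Cauchy sequence with a convergent subsequence converges to the same limit, it suffices to handle the subsequence.

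First I would pass to a subsequence $(\calQ_k) = (\calP_{n_k})$ with $\rho(\calQ_k, \calQ_{k+1}) < 2^{-k-2}$. By definition of $\rho$, for each $k$ there exist $u_k, v_k \in B(0, 2^{-k-2})$ such that
\begin{align*}
    (\calQ_k + u_k) \cap B(0, 2^{k+2}) = (\calQ_{k+1} + v_k) \cap B(0, 2^{k+2}).
\end{align*}
Next I would define translations $t_k$ accumulating the discrepancies: set $t_1 = 0$ and $t_{k+1} = t_k + v_k - u_k$. Translating the displayed equality by $t_k - u_k$ gives
\begin{align*}
    (\calQ_k + t_k) \cap B(t_k - u_k, 2^{k+2}) = (\calQ_{k+1} + t_{k+1}) \cap B(t_k - u_k, 2^{k+2}).
\end{align*}
Since $\|t_{k+1}-t_k\| \le 2^{-k-1}$, the sequence $(t_k)$ is Cauchy in $\Rd$, hence bounded, hence the balls on both sides eventually contain any prescribed $B(0,R)$.

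Having arranged this, I would define a patch $\calP$ by setting $\calP \cap B(0, R)$ to be the eventual common value of $(\calQ_k + t_k) \cap B(0, R)$ for $k$ large (consistency across different $R$ is immediate from the stabilization), and declaring $\calP = \bigcup_R (\calP \cap B(0,R))$. Two tiles in $\calP$ must be disjoint because they came from a genuine patch $\calQ_k + t_k$ at some stage. Then I would verify that $\rho(\calQ_k + t_k, \calP) \to 0$ directly from the definition of $\rho$ (taking $x = y = 0$ in the defining set $\Delta$), and thus $\rho(\calQ_k, \calP - t_\infty) \to 0$ where $t_\infty = \lim_k t_k$, using translation-continuity of the $\rho$-metric with appropriate adjustment of the witnesses $x, y \in B(0, r)$. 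Finally, since $(\calP_n)$ is Cauchy and has a convergent subsequence $(\calQ_k)$, the full sequence converges to the same limit, completing the proof.

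The main obstacle I expect is the bookkeeping of the two translation witnesses $u_k, v_k$ through the accumulated shift $t_k$: one must choose ball radii in $\Delta$ with enough slack that after translating by $t_k - u_k$ the stabilization still occurs on $B(0, R)$ for any prescribed $R$, and that the $\rho$-distance from $\calQ_k+t_k$ to $\calP$ genuinely shrinks. The inequality $\frac{1}{\e} - \eta > \frac{1}{\e + \eta}$ flagged in the excerpt for the triangle inequality will be useful here as well, when converting the exact equality on $B(t_k - u_k, 2^{k+2})$ into a valid element of $\Delta(\calQ_k + t_k, \calP)$ with a smaller ``effective radius.''
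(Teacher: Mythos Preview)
Your proposal is correct and follows essentially the same route as the paper: pass to a rapidly Cauchy subsequence, use the witnesses $u_k,v_k$ from $\Delta$ to build cumulative translations, observe that the translated patches stabilize on every ball, and take the union as the limit. The only cosmetic difference is that the paper accumulates the shifts from infinity, setting $z_n=\sum_{k\ge n}(x_k-y_k)$ and working directly with $\calP_n+z_n$, whereas you accumulate from the start via $t_k$ and then translate back by $t_\infty$ at the end; the paper's choice spares you that final translation-continuity argument, but both are fine.
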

	\begin{proof}
		Take a Cauchy sequence $(\mathcal{P}_n)_{n\in\Zpo}$.  We may assume
		\begin{align*}
			\rho(\calP_n,\calP_{n+1})<\frac{1}{2^n}
		\end{align*}
		holds for each $n\in\Zpo$. For any $n\in\Zpo$ there are $x_n,y_n\in B(0,\frac{1}{2^n})$ such that
		\begin{align*}
			(\calP_n+x_n)\cap B(0,2^n)=(\calP_{n+1}+y_n)\cap B(0,2^n).
		\end{align*}
		Set $z_n=\sum_{k=n}^{\infty}(x_k-y_k)$. Then $\|z_n\|<\frac{1}{2^{n-2}}$. Set
		\begin{align*}
			\calQ_n=(\calP_n\cap B(0,2^n-1))+z_n	
		\end{align*}
		for each $n\in\Zpo$. For any $n\in\Zpo$ we have $Q_n\subset Q_{n+1}$ and $\calP=\bigcup_{n=1}^{\infty}\calQ_n$ is a patch.
		 Also, one can show that
		if $n<m$, $\calQ_{m+1}\cap B(0,2^n)=\calQ_m\cap B(0,2^n)$.
		 From this
		we can show
		\begin{align*}
			\calP\cap B(0,2^{n-1})=(\calP_{n}+z_n)\cap B(0,2^{n-1}).
		\end{align*} 
		and $\calP=\lim\calP_n$.
	\end{proof}	
	
	\begin{defi}
		For a tiling $\mathcal{T}$, its continuous hull is $X_{\mathcal{T}}=\overline{\{T+x\mid x\in\mathbb{R}^d\}}$ (the closure in 
		$\Omega(\mathbb{R}^d)$ with respect to the tiling metric defined above).
	\end{defi}

	\begin{rem}
		For any tiling $\calT$, $\calS\in X_{\calT}$ and bounded $\calP\subset\calS$, there is $x\in\Rd$ such that $\calP+x\subset \calT$.
	\end{rem}
	If a tiling has FLC, we have the following result:
		
%

	\begin{prop}
		If a tiling $\mathcal{T}$ has FLC,  then its continuous hull $X_{\mathcal{T}}$ is compact.
	\end{prop}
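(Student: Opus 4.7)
The plan is to show $X_{\calT}$ is sequentially compact, which suffices for compactness since it is a metric space. $X_{\calT}$ is a closed subset of the complete space $(\Omega(\Rd), \rho)$ by Proposition \ref{prop_complete}, so it is enough to extract a Cauchy subsequence from any sequence $(\calS_n) \subset X_{\calT}$.

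Given such $(\calS_n)$, by the definition of $X_{\calT}$ as the closure of the translates of $\calT$, I choose $y_n \in \Rd$ with $\rho(\calT - y_n, \calS_n) < 1/n$; this reduces the problem to extracting a convergent subsequence of $\calT_n := \calT - y_n$. The heart of the proof is a diagonal argument driven by FLC. For each $k \in \Zpo$, Definition \ref{def_FLC} implies that the patches $\{\calT_n \cap B(0, 2^k)\}_n$ fall into finitely many translation classes, so by pigeonhole I can pass to a subsequence along which $\calT_n \cap B(0, 2^k) = \calQ_k + v_n^{(k)}$ for a single fixed representative $\calQ_k$. For $k$ large enough, $\calQ_k$ is nonempty, so the vectors $v_n^{(k)}$ are confined to a bounded subset of $\Rd$, and Bolzano--Weierstrass lets me thin further so that $v_n^{(k)}$ converges. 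A standard diagonal extraction yields a single subsequence $(\calT_{m_l})$ for which, at every scale $k$, $\calT_{m_l} \cap B(0, 2^k) = \calQ_k + v_l^{(k)}$ with $v_l^{(k)}$ convergent in $\Rd$ as $l \to \infty$.

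To verify the Cauchy property: for fixed $k$ and all sufficiently large $l, l'$, the vector $\delta := v_l^{(k)} - v_{l'}^{(k)}$ is arbitrarily small, and the resulting patch identity $\calT_{m_l} \cap B(0, 2^k) = (\calT_{m_{l'}} + \delta) \cap B(\delta, 2^k)$ forces the agreement $\calT_{m_l} \cap B(0, r) = (\calT_{m_{l'}} + \delta) \cap B(0, r)$ whenever $r + |\delta| \leq 2^k$. Plugging $(x, y) = (0, \delta)$ into the definition of $\Delta$ in (\ref{def_metric}), with $r$ taken just above $2^{-k}$, then yields $\rho(\calT_{m_l}, \calT_{m_{l'}}) = O(2^{-k})$. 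Hence the diagonal subsequence is Cauchy, it converges in the complete space $\Omega(\Rd)$, and its limit lies in the closed set $X_{\calT}$.

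I expect the main technical obstacle to be the translation bookkeeping in the diagonal step: the two scales $|\delta|$ and $2^{-k}$ must be balanced against the metric's inverse radius $1/r$ so that both $|\delta| < r$ and $1/r + |\delta| \leq 2^k$ hold simultaneously, which pins $r$ to be only slightly above $2^{-k}$ and must be checked against the constraint $r < 1/\sqrt{2}$ imposed by $\Delta$. A minor nuisance, easily sidestepped, is that for very small $k$ the representative $\calQ_k$ may be empty so that the $v_n^{(k)}$ are unbounded; starting the diagonal at $k_0$ large enough that each $\calT_n$ has a tile contained in $B(0, 2^{k_0})$ (which holds by finite tile type and boundedness of proto-tile diameters) eliminates this issue.
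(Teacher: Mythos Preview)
Your argument is correct and complete as a sequential-compactness proof. The paper, however, takes the complementary route: it uses Proposition~\ref{prop_complete} (completeness) together with a direct verification that $X_{\calT}$ is \emph{totally bounded}. Concretely, for a given $\e>0$ the paper chooses $\eta<\e$ with $1/\eta-\eta>1/\e$, uses FLC to produce a finite list $\Pi$ of translation representatives for the patches $\calS\cap B(0,1/\eta)$ with $\calS\in X_{\calT}$, and for each $\calP\in\Pi$ takes a finite $\eta$-net $Z_{\calP}$ of the (bounded) set of admissible translation vectors; the finite family $\{\calS_{z,\calP}:\calP\in\Pi,\ z\in Z_{\calP}\}$ is then shown to be an $\e$-net. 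So the paper builds an explicit $\e$-net once and for all, whereas you run a diagonal extraction scale by scale. Both are standard; the paper's approach is slightly shorter and avoids the Bolzano--Weierstrass/diagonal bookkeeping, while yours gives a more hands-on picture of how limits in $X_{\calT}$ arise.

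One small correction: the proposition as stated does \emph{not} assume finite tile type, so your appeal to it when disposing of the possibly empty $\calQ_k$ is not available. Fortunately the fix is simpler than what you propose: if $\calQ_k=\emptyset$ along the chosen subsequence, then $\calT_{m_l}\cap B(0,2^k)=\emptyset=\calT_{m_{l'}}\cap B(0,2^k)$ for all relevant $l,l'$, and the metric estimate $\rho(\calT_{m_l},\calT_{m_{l'}})\le 2^{-k}$ follows immediately with $x=y=0$ in the definition of $\Delta$. So you may simply set $v_l^{(k)}=0$ in that case and proceed; no assumption on tile diameters is needed.
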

	\begin{proof}
		By Proposition \ref{prop_complete}, it suffices to show that $X_{\mathcal{T}}$ is totally bounded.
		Take $\e>0$ arbitrarily and we shall prove the existence of $\e$-net of $\OmegaT$.
		If $\e\geq\frac{1}{\sqrt{2}}$ then $B(\calT, \e)=\OmegaT$. We may assume $\e<\frac{1}{\sqrt{2}}$.
		Take $\eta<\e$ such that $\frac{1}{\eta}-\eta>\frac{1}{\e}$. By FLC there is a finite set $\Pi$ of patches included in $B(0,\frac{1}{\eta})$ such that
		if $\calS\in\OmegaT$, there are $\calP\in\Pi$ and $y\in\Rd$ for which $\calS\cap B(0,\frac{1}{\eta})=\calP+y$.
		For any $\calP\in\Pi$ set
		\begin{align*}
			Y_{\calP}=\{y\in B(0,\frac{2}{\eta})\mid \text{there is $\calS\in\OmegaT$ such that $\calS\cap B(0,\frac{1}{\eta})=\calP+y$}\}.
		\end{align*}
		Since $Y_{\calP}$ is a bounded set, we can take its finite $\eta$-net $Z_{\calP}$.
		For any $z\in Z_{\calP}$, there is $\calS_{z,\calP}\in\OmegaT$ such that $\calS_{z,\calP}\cap B(0,\frac{1}{\eta})=\calP+z$.
		We claim $\OmegaT\subset\bigcup_{\calP\in\Pi}\bigcup_{z\in Z_{\calP}}B(\calS_{z,\calP},\e)$.
		 If $\mathcal{S}\in X_{\calT}$, then there are $\calP\in\Pi$ and $y\in\Rd$ such that $\mathcal{S}\cap B(0,\frac{1}{\eta})=\calP+y$.
		 If $\calP=\emptyset$, we can take $y=0$. Otherwise we have $y\in B(0,\frac{2}{\eta})$ because
		  $T, T+y\subset B(0,\frac{1}{\eta})$ for any $T\in\calP$.
		 In both cases we can take $z\in Z_{\calP}$ such that $\|z-y\|<\eta$. We have 
		 $\mathcal{S}\cap B(0,\frac{1}{\e})=(\mathcal{S}_{z,\calP}+y-z)\cap B(0,\frac{1}{\e})$ and $\rho(\mathcal{S},\mathcal{S}_{z,\calP})<\e$.
	\end{proof}

%

	We then introduce cylinder sets, which form a basis for the relative topology of the metric topology on certain continuous hulls.

	\begin{defi}
		Take a tiling $\mathcal{T}$, a $\mathcal{T}$-legal patch $\mathcal{P}$ and an open neighborhood $U$ of $0$ in $\mathbb{R}^d$. Set
		\begin{align*}
			C_{\mathcal{T}}(U,\mathcal{P})=\{\mathcal{S}\in X_{\mathcal{T}}\mid \text{there is $x\in U$ such that $\mathcal{P}+x\subset\mathcal{S}$}\}.
		\end{align*}
	\end{defi}

	We can show that if $\calP$ is finite $\calT$-legal and $U$ is an open neighborhood of $0\in\Rd$, then $C_{\calT}(U,\calP)$ is open
	with respect to the relative topology of the metric topology on $X_{\calT}$.
	If $\calT$ has finite tile type, then 
	\begin{align*}
			\{C_{\mathcal{T}}(U,\mathcal{P})\mid \text{$\mathcal{P}$: finite $\mathcal{T}$-legal patch and $U$: open neighborhood of $0$ in $\mathbb{R}^d$}\}
	\end{align*}
		generates the relative topology of metric topology on $X_{\mathcal{T}}$.

	The continuous hull of any tiling $\mathcal{T}$ has an $\mathbb{R}^d$-action via translation:
	\begin{align*}
		X_{\mathcal{T}}\times \mathbb{R}^d\ni (\mathcal{S}, x)\mapsto \mathcal{S}-x\in X_{\mathcal{T}}.
	\end{align*}
	This is jointly continuous. The topological dynamical system $(X_{\calT},\Rd)$ is called the tiling dynamical system associated to $\mathcal{T}$.
	
	Next we introduce a sufficient condition for the tiling dynamical system to be minimal. Recall that a dynamical system $\mathbb{R}^d\curvearrowright \Omega$,
	where $\Omega$ is a compact Hausdorff space, is minimal if any of its orbits is dense in $\Omega$. 
		
	\begin{defi}
		A subset $S\subset\mathbb{R}^d$ is relatively dense in $\mathbb{R}^d$
		if there is $R>0$ such that for any $x\in\mathbb{R}^d$, we have $B(x,R)\cap S\neq\emptyset$.
	\end{defi}
	
	\begin{defi}\label{def_repetitive}
		A tiling $\mathcal{T}$ is repetitive if for any finite patch $\mathcal{P}\subset\mathcal{T}$, the set $\{x\in\mathbb{R}^d\mid \mathcal{P}+x\subset\mathcal{T}\}$
		is relatively dense in $\mathbb{R}^d$.
	\end{defi}
	\begin{rem}
		In the literature repetitivity is defined in various ways. For example, a tiling $\calT$ may be defined to be repetitive if for any
		compact $K\subset\Rd$, there is a compact set $K'\subset\Rd$ such that whenever $x_1,x_2\in\Rd$ there is $y\in K'$ with
		$(\calT+x_1)\cap K=(\calT+x_2+y)\cap K$. For FLC tilings of finite tile type, this condition and our definition of repetitivity are equivalent.
		Definition \ref{def_repetitive} also coincides with the definition in \cite{So}.
	\end{rem}
	
	\begin{lem}
		Let $\calT$ be a tiling of $\Rd$ of finite tile type.
		If $\mathcal{T}$ is repetitive, then the associated tiling dynamical system is minimal.
	\end{lem}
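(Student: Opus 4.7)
The plan is to show that for any $\calS \in X_{\calT}$, the orbit $\{\calS - x : x \in \Rd\}$ is dense in $X_{\calT}$. Because this orbit closure is a closed, $\Rd$-invariant subset of $X_{\calT}$, and $X_{\calT}$ is itself the closure of the orbit of $\calT$, it suffices to find, for each $\e > 0$, a vector $x \in \Rd$ with $\rho(\calS - x, \calT) < \e$; then $\calT$ lies in the orbit closure of $\calS$, which therefore contains all of $X_{\calT}$.

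To construct such an $x$, I would set $R = 1/\e$ and $r = \max_{P \in \calA}\diam P$, and consider the finite $\calT$-legal patch $\calP := \calT \cap B(0, R+r)$. By finite tile type, every tile of $\calT$ whose closure meets $B(0,R)$ has diameter at most $r$ and hence lies in $B(0, R+r)$, so the tiles of $\calP$ cover $B(0,R)$. Repetitivity applied to $\calP$ yields an $M > 0$ such that every ball of radius $M$ in $\Rd$ contains a vector $a$ with $\calP + a \subset \calT$. Choose $\delta > 0$ small enough that $M + R + r < 1/\delta - \delta$; since $\calS \in X_{\calT}$, pick $y$ with $\rho(\calT + y, \calS) < \delta$. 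Unwinding the metric gives $u_1, u_2 \in B(0, \delta)$ such that $((\calT + y) + u_1) \cap B(0, 1/\delta) = (\calS + u_2) \cap B(0, 1/\delta)$, and writing $w = y + u_1 - u_2$ converts this to $(\calT + w) \cap B(-u_2, 1/\delta) = \calS \cap B(-u_2, 1/\delta)$. Applying repetitivity at the point $-w$ produces $a \in B(-w, M)$ with $\calP + a \subset \calT$; then $\calP + a + w \subset \calT + w$ has support inside $B(0, M + R + r)$, a region on which $\calT + w$ and $\calS$ agree, so $\calP + (a + w) \subset \calS$. Setting $x := a + w$ yields $\calP \subset \calS - x$.

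It then remains to verify $(\calS - x) \cap B(0, R) = \calT \cap B(0, R)$, from which $\rho(\calS - x, \calT) \leq 1/R = \e$ follows (strict inequality is arranged by taking $R$ slightly larger than $1/\e$). The inclusion $\supset$ is immediate from $\calT \cap B(0, R) \subset \calP \subset \calS - x$. For the reverse inclusion, take $T \in \calS - x$ with $T \subset B(0, R)$; since $T$ is open and nonempty and the union of tiles of $\calT$ is dense in $\Rd$, there is some $p \in T$ lying in a (necessarily unique) tile $T' \in \calT$, and because $\diam T' \leq r$ and $p \in B(0,R)$ we get $T' \subset B(0, R+r)$, so $T' \in \calP \subset \calS - x$. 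The patch axiom for $\calS - x$ combined with $p \in T \cap T'$ then forces $T = T' \in \calT$. The main subtlety is this final step: one must combine the covering property of $\calP$ (ensured by finite tile type) with the disjointness of distinct tiles in the patch $\calS - x$ to rule out spurious tiles of $\calS - x$ inside $B(0, R)$ not already coming from $\calT$.
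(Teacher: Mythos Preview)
Your proof is correct and uses essentially the same idea as the paper: repetitivity locates a translate of a large $\calT$-patch inside $\calS$, from which density of the orbit of $\calS$ follows. The paper packages the final step via cylinder sets (which form a basis for the topology on $X_{\calT}$), so once $\calP\subset\calS-x$ it concludes immediately that a translate of $\calS$ meets $C_{\calT}(U,\calP)$; this bypasses your direct metric computation and the extra argument ruling out spurious tiles of $\calS-x$ inside $B(0,R)$.
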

	\begin{proof}
		Take $\calS\in X_{\calT}$.
		Let  $\calP$ be a finite $\calT$-legal patch. 
		It suffices to show that a translate of $\calP$ appears in $\calS$, since if so
		a translate of $\calS$ intersects with $C_{\calT}(U,\calP)$ for any $U$.
		By repetitivity, there is $R>0$ such that
		$\{B(\lambda,R)\mid\lambda\in\Lambda\}$ cover $\Rd$, where
		$\Lambda=\{x\in\Rd\mid \calP+x\subset\calT\}$.
		Take $\e>0$ such that if $\|x\|<R$, then $\supp\calP+x\subset B(0,\frac{1}{\e})$. 
		There is $y\in\Rd$ such that $\rho(\calS, \calT+y)<\e$. $\calS$ and $\calT+y$ almost agree on a large ball
		around the origin, and on this large ball a copy of $\calP$ must appear. 
	\end{proof}

        \subsection{Eigenfunctions for dynamical systems}
        Recall that for a locally compact abelian group $G$ and $\mathbb{T}=\{z\in\mathbb{C}\mid |z|=1\}$, a continuous group homomorphism 
        $\chi\colon G\rightarrow \mathbb{T}$ is called a character.
	\begin{defi}\label{Def-eigenfunction}
		Let $G$ be a locally compact abelian group and $X$ be a compact space. Assume $G$ acts on $X$ continuously. 
		Then we call a continuous function $f\colon X\rightarrow \mathbb{C}$ a continuous eigenfunction if
		$f\neq 0$ and 
		there is a character $\chi\colon G\rightarrow \mathbb{T}$ such that $f(g\cdot x)=\chi(g)f(x)$ holds for any $g\in G$ and $x\in X$.
		We call this character $\chi$ the (topological) eigencharacter.
	\end{defi}

	\begin{defi}\label{def_measurable_eigenfunction}
		Let $(X,\mathcal{B},\mu)$ be a probability space and suppose a locally compact abelian group $G$ acts on $X$ in a measure-preserving way.
		Then $f\in L^2(\mu)$ is called an $L^2$-eigenfunction if $f\neq 0$ and there is a character $\chi$ such that 
		$U_g(f)=\chi(g)f$ for all $g\in G$, where $U_g$ is the corresponding unitary operator on $L^2(\mu)$.
		We call this $\chi$ the (measurable) eigencharacter.
	\end{defi}
	
	\begin{defi}
		For $a\in\mathbb{R}^d$, let $\chi_a$ be the character of $\Rd$ defined by $\chi_a(x)=e^{2\pi i\langle a,x\rangle}$, 
		where $\langle\cdot ,\cdot\rangle$ is the standard inner product of $\mathbb{R}^d$.
		 (Every character of $\mathbb{R}^d$ is of this form.)
		If $G=\mathbb{R}^d$ in Definition \ref{Def-eigenfunction} (resp.\  in Definition \ref{def_measurable_eigenfunction}),
		 and $\chi_a$ is the eigencharacter of a continuous 
		eigenfunction $f$ (resp.\  $L^2$-eigenfunction $f$), $a$ is called a topological eigenvalue (resp.\  measurable eigenvalue) for 
		the topological dynamical system $(X, \mathbb{R}^d)$ (resp.\ measure-preserving system $(X,\mathcal{B}, \mu, \mathbb{R}^d)$). 
	\end{defi}
	
	\begin{rem}
		If $(X,\mathbb{R}^d)$ is a uniquely ergodic topological dynamical system with a
		unique invariant measure $\mu$, in some cases
		for any  measurable eigenvalue $a$, we can always take an eigenfunction $f\in L^2(\mu)$ which is continuous.
		Thus in this situation topological eigenvalue and measurable eigenvalue are equivalent concepts
		and we say simply ``eigenvalue'' for topological and measurable eigenvalue.
		See Theorem \ref{Thm-Solomyak}.
	\end{rem}

	\begin{rem}
		If $X$ is compact and $G$ acts on $X$ minimally, for any eigencharacter $\chi$ we can always take
		a continuous eigenfunction $f$ which takes value in $\mathbb{T}$. In later sections all the topological dynamics are minimal,
		and we tacitly use this fact about eigenfunctions.
	\end{rem}

\section{General Tilings with Sufficiently Many Eigenvalues}\label{section_general_tilings}

	Let $\calT$ be a tiling of $\Rd$ of finite tile type with a set of proto-tiles $\mathcal{A}$.
	Recall Definition \ref{def_language}.
	 By
	Proposition \ref{FLC_iikae}, $\Pi_{\calT,R,\mathcal{A}}$ is finite  for all $R>0$ if and only if the tiling $\mathcal{T}$ has FLC.

	Recall also that we set $\chi_a(x)=e^{2\pi i\langle x,a\rangle}$.
	
	\begin{defi}
		We endow a metric $\rhoT$ on $\mathbb{T}$ by identifying $\mathbb{T}$ with $\mathbb{R}/2\pi\mathbb{Z}$. In other words we set 
		\begin{align*}
			\rhoT(e^{2\pi i\theta}, e^{2\pi i \theta'})=\min_{n\in\mathbb{Z}}|\theta-\theta'+n|
		\end{align*}
		for any $\theta, \theta'\in\mathbb{R}$. This gives a well-defined metric on $\mathbb{T}$ that generates the standard topology of $\mathbb{T}$.
	\end{defi}
	
	\begin{setting}
	In this section we assume $\calT$ is a repetitive tiling of $\Rd$ of FLC of finite tile type, with a set of proto-tiles $\calA$.
	\end{setting}
	
	\begin{lem}\label{Lem1_main_thm}
		Let $a\in\Rd\setminus\{0\}$ be a topological
		 eigenvalue for the topological dynamical system $(X_{\calT}, \Rd)$ with a continuous $\mathbb{T}$-valued 
		eigenfunction $f$.
		Also let $\e$ be a positive real number and $U$ be a bounded subset of $\Rd$.
		Assume $\sup_{x\in U}\rhoT(\chi_a(x),1)<\frac{\e}{2}$.
		Then there exists a positive real number $R$ such that
		 $\mathcal{P}\in\bigcup_{L>R}\Pi_{\mathcal{T},L,\calA}$ and 
		$\mathcal{S}_1,\mathcal{S}_2\in C_{\calT}(U,\mathcal{P})$ imply $\rhoT(f(\mathcal{S}_1),f(\mathcal{S}_2))<\e$.
	\end{lem}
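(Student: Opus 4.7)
The plan is to exploit that containment of a sufficiently large patch pins down a tiling on a correspondingly large ball around the origin, and then combine this geometric rigidity with uniform continuity of $f$ and the eigenfunction equation $f(\calS - x) = \chi_a(x) f(\calS)$. Write $\alpha = \sup_{x \in U}\rhoT(\chi_a(x), 1)$, so that $\alpha < \e/2$ by assumption, and set $\delta = \e - 2\alpha > 0$. Since $\calT$ has FLC, $X_{\calT}$ is compact and therefore $f\colon X_{\calT}\to\mathbb{T}$ is uniformly continuous; pick $\eta > 0$ such that $\rho(\calS,\calS') < \eta$ implies $\rhoT(f(\calS), f(\calS')) < \delta$.

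The geometric step is the following. Let $r = \max_{P\in\calA}\diam P$ and consider any $\calP = (\calT-y)\cap B(0,L)\in\Pi_{\calT,L,\calA}$. Every tile of $\calT-y$ meeting $B(0,L-r)$ has diameter at most $r$ and so is entirely contained in $B(0,L)$; hence $\supp\calP \supset B(0,L-r)$. If $\calS \in X_{\calT}$ contains $\calP$, disjointness of tiles forces $\calS$ to coincide with $\calT-y$ on $B(0,L-r)$. Now choose $R > r + \sqrt{2}$ large enough that $1/(R-r) < \eta$. For $\calP \in \Pi_{\calT,L,\calA}$ with $L > R$ and $\calS_1,\calS_2 \in C_{\calT}(U,\calP)$, pick $x_1,x_2 \in U$ with $\calP \subset \calS_i - x_i$. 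Then $\calS_1 - x_1$ and $\calS_2 - x_2$ both agree with $\calT-y$ on $B(0, L-r)$, so taking zero displacements in the definition of $\rho$ yields $\rho(\calS_1-x_1,\calS_2-x_2) \leq 1/(L-r) < \eta$, whence $\rhoT(f(\calS_1-x_1), f(\calS_2-x_2)) < \delta$.

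Finally, applying the eigenfunction relation $f(\calS_i) = \chi_a(-x_i) f(\calS_i - x_i)$ and using translation-invariance of $\rhoT$ on $\mathbb{T}$,
\begin{align*}
\rhoT(f(\calS_1), f(\calS_2)) &\leq \rhoT(f(\calS_1-x_1), f(\calS_2-x_2)) + \rhoT(\chi_a(-x_1), \chi_a(-x_2))\\
&< \delta + 2\alpha = \e,
\end{align*}
where the second summand is bounded via $\rhoT(\chi_a(-x_1),\chi_a(-x_2)) = \rhoT(\chi_a(x_1),\chi_a(x_2)) \leq \rhoT(\chi_a(x_1),1) + \rhoT(1,\chi_a(x_2)) < 2\alpha$.

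The step demanding the most care is the geometric one: making the dependence $L \mapsto L - r$ quantitatively explicit, so that $\rho(\calS_1-x_1,\calS_2-x_2)$ genuinely tends to $0$ as $L \to \infty$ and can be driven below $\eta$. The rest is a routine combination of uniform continuity of $f$ on the compact hull and the triangle inequality for $\rhoT$.
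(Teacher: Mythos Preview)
Your proof is correct and follows essentially the same route as the paper's: use uniform continuity of $f$ on the compact hull to get a $\delta$ (your $\eta$) with $\rhoT(f(\cdot),f(\cdot))<\e-2\alpha$, choose $R$ roughly equal to $1/\delta+\max_{P\in\calA}\diam P$, and then combine the geometric fact that two tilings in $C_{\calT}(U,\calP)$ become $\delta$-close after shifting by their respective $x_i\in U$ with the eigenfunction identity and the triangle inequality to absorb the extra $2\alpha$. The paper's proof is simply a terse two-line version of exactly this argument, leaving the geometric step and the final triangle-inequality bookkeeping implicit.
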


	\begin{proof}
		Set $\eta=\sup_{x\in U}\rhoT(\chi_a(x),1)$.
		Since $X_{\calT}$ is compact and $f$ is uniformly continuous, there is $\delta>0$ such that $\rho(\mathcal{S}_1,\mathcal{S}_2)<\delta$ implies 
		$\rhoT(f(\mathcal{S}_1),f(\mathcal{S}_2))<\e-2\eta$.
		
		Set $R=\frac{1}{\delta}+\max_{T\in\mathcal{T}}\diam T$. This $R$ has the desired property.
	\end{proof}

	\begin{lem}\label{Lem2_main_thm}
		Let $U$ be a neighborhood of 0 in $\mathbb{R}^d$, $\mathcal{P}$ and $\mathcal{P}'$ $\mathcal{T}$-legal patches, and
		$f$ a continuous eigenfunction for the action $\mathbb{R}^d\curvearrowright X_{\mathcal{T}}$ with an eigenvalue $a$.
		Suppose
		\begin{align}\label{condition_empty}
			f(C_{\calT}(U,\mathcal{P}))\cap f(C_{\calT}(U,\mathcal{P}'))=\emptyset.
		\end{align}
		Then $\mathcal{P}\cup(\mathcal{P}'+y+z)$ is \emph{not} $\mathcal{T}$-legal for any $y\in\Ker\chi_a$ and $z\in U$.
	\end{lem}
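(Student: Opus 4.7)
The plan is to argue by contradiction: assume $\mathcal{P}\cup(\mathcal{P}'+y+z)$ is $\mathcal{T}$-legal for some $y\in\Ker\chi_a$ and $z\in U$, and produce two points in $C_{\calT}(U,\calP)$ and $C_{\calT}(U,\calP')$ whose images under $f$ coincide, violating the hypothesis \eqref{condition_empty}.

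First, I would unpack legality: there exists $w\in\Rd$ with $\calP+w\subset\calT$ and $\calP'+y+z+w\subset\calT$. Translating, this means $\calP\subset\calT-w$ and $\calP'+z\subset\calT-w-y$. Set
\begin{align*}
    \calS_1=\calT-w,\qquad \calS_2=\calT-w-y.
\end{align*}
Both lie in $X_{\calT}$ since the continuous hull is $\Rd$-invariant. Since $0\in U$ and $\calP\subset \calS_1$, we have $\calS_1\in C_{\calT}(U,\calP)$; since $z\in U$ and $\calP'+z\subset\calS_2$, we have $\calS_2\in C_{\calT}(U,\calP')$.

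The second step is to exploit the eigenfunction equation with the action $x\cdot\calS=\calS-x$. Since $\calS_2=y\cdot\calS_1$ and $y\in\Ker\chi_a$,
\begin{align*}
    f(\calS_2)=f(y\cdot\calS_1)=\chi_a(y)f(\calS_1)=f(\calS_1).
\end{align*}
Thus $f(\calS_1)=f(\calS_2)$ belongs to $f(C_{\calT}(U,\calP))\cap f(C_{\calT}(U,\calP'))$, contradicting \eqref{condition_empty}. This contradiction will establish the lemma.

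There is no serious obstacle here: the argument is really just a bookkeeping exercise tracking how translations move a $\calT$-legal occurrence of $\calP\cup(\calP'+y+z)$ into two cylinder sets differing by the kernel-direction $y$. The only point that needs care is consistency of sign conventions for the $\Rd$-action on $X_{\calT}$ (so that $\calS_2$ is indeed obtained from $\calS_1$ by the action of $y$, and hence $f(\calS_2)=\chi_a(y)f(\calS_1)$), and the observation that $0,z\in U$ are legitimate witnesses in the definition of $C_{\calT}(U,\calP)$ and $C_{\calT}(U,\calP')$ respectively.
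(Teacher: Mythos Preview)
Your argument is correct and is essentially the contrapositive of the paper's proof: the paper takes any $\calS\in X_{\calT}$ with $\calP\subset\calS$, observes $\calS\in C_{\calT}(U,\calP)$, and uses $f(\calS-y)=f(\calS)\notin f(C_{\calT}(U,\calP'))$ to conclude $\calS-y\notin C_{\calT}(U,\calP')$, hence $\calP'+y+z\not\subset\calS$. The key step---transporting $f$-values along $\Ker\chi_a$ via the eigenfunction relation and reading off cylinder-set membership---is identical in both versions.
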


	\begin{proof}
		Take $\mathcal{S}\in X_{\mathcal{T}}$ such that $\mathcal{P}\subset\mathcal{S}$ and it suffices to prove that
		$\mathcal{P}'+y+z\not\subset\mathcal{S}$. 
		For such $\mathcal{S}$ we have $\mathcal{S}\in C_{\calT}(U,\mathcal{P})$. By condition (\ref{condition_empty}), for any $y\in\Ker\chi_a$,
		\begin{align*}
			f(\mathcal{S}-y)=f(\mathcal{S})\notin f(C_{\calT}(U,\mathcal{P}')).
		\end{align*}
		Then $\mathcal{S}-y\notin C_{\calT}(U,\mathcal{P}')$, and by the definition of cylinder set, the desired condition is deduced.
	\end{proof}
	
	Recall that if $X$ is a metric space, the open ball with its center $x$ and its radius $R>0$ is denoted by $B(x,R)$.

	\begin{prop}\label{prop_main_thm}
		Let $a\in\Rd\setminus\{0\}$ be a topological eigenvalue for the topological dynamical system $(X_{\calT},\Rd)$
		with a continuous $\mathbb{T}$-valued eigenfunction $f$.
		Also let $U$ be a bounded neighborhood for $0\in\Rd$.
		Assume $\sup_{x\in U}\rhoT(\chi_a(x),1)<\frac{1}{8}$.
		Then there exist $R>0$ and a map $x:\bigcup_{L>R}\Pi_{\calT,L,\calA}\rightarrow\mathbb{R}^d$ such that
		the patch
		\begin{align*}
			(\calP_1+x(\calP_1))\cup (\calP_2+x(\calP_2)+\frac{1}{2\| a\|^2}a+y+z)
		\end{align*}
		is \emph{not} $\calT$-legal for any $\calP_1,\calP_2\in\bigcup_{L>R}\Pi_{\mathcal{T},L,\calA}$, $y\in\Ker\chi_a$ and $z\in U$.
	\end{prop}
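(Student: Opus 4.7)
The plan is to combine Lemmas \ref{Lem1_main_thm} and \ref{Lem2_main_thm}, using the key observation that the vector $\frac{1}{2\|a\|^2}a$ satisfies $\chi_a\bigl(\frac{1}{2\|a\|^2}a\bigr)=e^{\pi i}=-1$, so that translation by it rotates the eigenfunction value to the antipode on $\mathbb{T}$. This antipodal gap of length $\tfrac{1}{2}$ in $\rhoT$ is exactly what we will exploit: two tiny balls around $f(\calT)$ and $-f(\calT)$ can be separated provided each has radius strictly less than $\tfrac14$, and this is precisely what the hypothesis $\sup_{x\in U}\rhoT(\chi_a(x),1)<\tfrac18$ buys us via Lemma \ref{Lem1_main_thm}.

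Concretely, let $\eta=\sup_{x\in U}\rhoT(\chi_a(x),1)<\tfrac18$. Choose $\e$ with $2\eta<\e<\tfrac14$ and apply Lemma \ref{Lem1_main_thm} to obtain $R>0$ such that for every $\calP\in\bigcup_{L>R}\Pi_{\calT,L,\calA}$, any two elements of $C_{\calT}(U,\calP)$ have $f$-values within $\rhoT$-distance $\e$. Since every $\calP\in\Pi_{\calT,L,\calA}$ is $\calT$-legal by definition, for each such $\calP$ pick $x(\calP)\in\Rd$ with $\calP+x(\calP)\subset\calT$; this defines the required map.

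Now fix $\calP_1,\calP_2\in\bigcup_{L>R}\Pi_{\calT,L,\calA}$ and set $\calR_1=\calP_1+x(\calP_1)$ and $\calR_2=\calP_2+x(\calP_2)+\frac{1}{2\|a\|^2}a$. A translation-covariance computation shows
\begin{align*}
f\bigl(C_{\calT}(U,\calR_1)\bigr)&=\chi_a(x(\calP_1))\cdot f\bigl(C_{\calT}(U,\calP_1)\bigr),\\
f\bigl(C_{\calT}(U,\calR_2)\bigr)&=-\,\chi_a(x(\calP_2))\cdot f\bigl(C_{\calT}(U,\calP_2)\bigr),
\end{align*}
using $\chi_a\bigl(\tfrac{1}{2\|a\|^2}a\bigr)=-1$. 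Because $\calT\in C_{\calT}(U,\calR_1)$ and $\calT$ after translation by $-\tfrac{1}{2\|a\|^2}a$ lies in $C_{\calT}(U,\calP_2+x(\calP_2))$ (after translating back, $f$ picks up the factor $-1$), the first set lies in the open $\e$-ball around $f(\calT)$ in $(\mathbb{T},\rhoT)$, while the second lies in the open $\e$-ball around $-f(\calT)$. Since $\rhoT(f(\calT),-f(\calT))=\tfrac12>2\e$, the triangle inequality forces these two balls, hence the two image sets, to be disjoint.

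Disjointness is exactly hypothesis (\ref{condition_empty}) of Lemma \ref{Lem2_main_thm} applied to $\calR_1$ and $\calR_2$, which immediately yields that $\calR_1\cup(\calR_2+y+z)$ fails to be $\calT$-legal for all $y\in\Ker\chi_a$ and $z\in U$, as required. The only delicate step is bookkeeping the constants so that the two open $\e$-balls really are disjoint, and this is where the sharp bound $\tfrac18$ in the hypothesis is used; everything else is formal manipulation with cylinder sets and the covariance $f(\calS-v)=\overline{\chi_a(v)}f(\calS)$.
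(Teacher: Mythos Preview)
Your proof is correct and follows the same strategy as the paper: apply Lemma~\ref{Lem1_main_thm} with $\e<\tfrac14$ to get $R$, then use the half-period shift $\tfrac{1}{2\|a\|^2}a$ to separate the two $f$-images on $\mathbb{T}$ and invoke Lemma~\ref{Lem2_main_thm}. The only difference is the choice of $x(\calP)$: the paper picks $x(\calP)$ abstractly so that $\chi_a(x(\calP))=f(\calS)$ for some $\calS\in C_{\calT}(U,\calP)$, which centers the images at $1$ and $-1$, whereas you pick $x(\calP)$ as a legality witness so that $\calT\in C_{\calT}(U,\calP+x(\calP))$, centering the images at $f(\calT)$ and $-f(\calT)$; these are equivalent normalizations. (One cosmetic point: your displayed covariance factor $\chi_a(x(\calP_i))$ and the formula $f(\calS-v)=\overline{\chi_a(v)}f(\calS)$ at the end are inconsistent with each other and with the paper's convention $f(\calS-v)=\chi_a(v)f(\calS)$, but since $\overline{-1}=-1$ this sign ambiguity is immaterial to the argument.)
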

	
	\begin{proof}
		For $a, \e=\frac{1}{4}$ and $U$, apply Lemma \ref{Lem1_main_thm}. We obtain a positive real number $R$ such that
		$\mathcal{P}\in\bigcup_{L>R}\Pi_{\mathcal{T},L,\calA}$ and 
		$\mathcal{S}_1,\mathcal{S}_2\in C_{\calT}(U,\mathcal{P})$ imply $\rhoT(f(\mathcal{S}_1),f(\mathcal{S}_2))<\frac{1}{4}$.
		For each $\calP\in\bigcup_{L>R}\Pi_{\calT,L,\calA}$, take $\calS\in C_{\calT}(U,\calP)$ and choose $x(\calP)\in\Rd$ such that
		$\chi_a(x(\calP))=f(\calS)$.
		If $\calP_1,\calP_2\in\bigcup_{L>R}\Pi_{\mathcal{T},L,\calA}$, then $f(C_{\calT}(U,\calP_1+x(\calP_1)))\subset B(1,1/4)$ and 
		$f(C_{\calT}(U,\calP_2+x(\calP_2)+\frac{1}{2\| a\|^2}a))\subset B(-1,1/4)$ 
		and so $f(C_{\calT}(U,\calP_1+x(\calP_1)))\cap f(C_{\calT}(U,\calP_2+x(\calP_2)+\frac{1}{2\| a\|^2}a))=\emptyset$. 
		Lemma \ref{Lem2_main_thm} applies.
	\end{proof}
	
	\begin{thm}\label{main_thm1}
		Let $A$ be a subgroup of the group of all topological eigenvalues for the topological dynamical system
		$(X_{\mathcal{T}},\Rd)$. Assume $0\in\overline{A\setminus\{0\}}$.
		Then for any $R_0>0$ and $\e>0$, there exist $R>0$, $a\in A\setminus\{0\}$,
		and $x:\bigcup_{L>R}\Pi_{\calT,L,\calA}\rightarrow\mathbb{R}^d$  
		that satisfy the following two conditions:
		\begin{itemize}
			\item for any
				$\mathcal{P}_1,\mathcal{P}_2\in\bigcup_{L>R}\Pi_{\mathcal{T},L,\calA}$, $y\in\Ker\chi_a$ and $z\in B(0,R_0)$, 
				the patch
				\begin{align*}
					(\calP_1+x(\calP_1))\cup (\calP_2+x(\calP_2)+\frac{1}{2\| a\|^2}a+y+z)
				\end{align*}
				is \emph{not} $\calT$-legal.
			\item \begin{align}
					8R_0<\frac{1}{\|a\|}<(8+\e)R_0.\label{eq_relation_band_interval}
				\end{align}
		\end{itemize}
	\end{thm}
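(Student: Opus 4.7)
The plan is to reduce the theorem to Proposition \ref{prop_main_thm}: given the proposition, the task is just to produce an eigenvalue $a \in A\setminus\{0\}$ satisfying (\ref{eq_relation_band_interval}) and then take $U := B(0,R_0)$ as the bounded neighborhood of $0$. The non-legality conclusion is then exactly the output of the proposition, so everything boils down to finding the right $a$ and verifying the smallness hypothesis $\sup_{x\in U}\rhoT(\chi_a(x),1) < 1/8$.

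First I would produce the eigenvalue $a$. The open interval $I = (1/((8+\e)R_0),\, 1/(8R_0))$ has positive length $\e/(8(8+\e)R_0)$. Since $0 \in \overline{A\setminus\{0\}}$, pick $a_0 \in A\setminus\{0\}$ with $\|a_0\| < \e/(8(8+\e)R_0)$. Because $A$ is a subgroup, every integer multiple $na_0$ lies in $A$, and the sequence $n\|a_0\| = \|na_0\|$ ascends through the positive reals in steps strictly smaller than the length of $I$. Let $n_0$ be the least positive integer with $n_0\|a_0\| > 1/((8+\e)R_0)$. Minimality gives $(n_0-1)\|a_0\| \leq 1/((8+\e)R_0)$ (with the convention $0 \leq 1/((8+\e)R_0)$ when $n_0 = 1$), and therefore
\begin{align*}
 n_0\|a_0\| \;\leq\; \tfrac{1}{(8+\e)R_0} + \|a_0\| \;<\; \tfrac{1}{(8+\e)R_0} + \tfrac{\e}{8(8+\e)R_0} \;=\; \tfrac{1}{8R_0}.
\end{align*}
Setting $a := n_0 a_0 \in A\setminus\{0\}$ then establishes (\ref{eq_relation_band_interval}).

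Next I would verify the hypothesis of Proposition \ref{prop_main_thm} for this $a$ and $U := B(0,R_0)$. For any $x\in U$, Cauchy--Schwarz gives $|\langle a,x\rangle| \leq \|a\|\,\|x\| < \|a\|R_0 < 1/8$, using the upper bound in (\ref{eq_relation_band_interval}). Since $|\langle a,x\rangle| < 1/2$, the definition of $\rhoT$ yields $\rhoT(\chi_a(x),1) \leq |\langle a,x\rangle|$, and so $\sup_{x\in U}\rhoT(\chi_a(x),1) < 1/8$. Proposition \ref{prop_main_thm} now applies directly and produces $R>0$ together with a map $x\colon\bigcup_{L>R}\Pi_{\calT,L,\calA}\to\Rd$ making $(\calP_1+x(\calP_1))\cup(\calP_2+x(\calP_2)+\tfrac{1}{2\|a\|^2}a+y+z)$ non-$\calT$-legal for all $\calP_1,\calP_2$ and all $y\in\Ker\chi_a$, $z\in B(0,R_0)$, which is the first bulleted condition. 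There is no serious obstacle; the only delicate point is the stepping argument in $A$ needed to land $\|a\|$ inside the prescribed interval, and this is a short pigeonhole using that $A$ is a subgroup of $\Rd$ accumulating at $0$.
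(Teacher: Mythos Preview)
Your proposal is correct and follows essentially the same approach as the paper: choose $a\in A\setminus\{0\}$ with $\|a\|$ in the interval $(\tfrac{1}{(8+\e)R_0},\tfrac{1}{8R_0})$ and apply Proposition~\ref{prop_main_thm} with $U=B(0,R_0)$. The paper's proof is two lines and simply asserts that such an $a$ exists and that the proposition applies; you have supplied the details the paper omits, namely the stepping argument using the subgroup structure to land $\|a\|$ in the prescribed annulus, and the verification that $\sup_{x\in U}\rhoT(\chi_a(x),1)<\tfrac{1}{8}$.
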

	\begin{proof}
		We can take $a\in A\setminus\{0\}$ such that $\frac{1}{(8+\e)R_0}<\|a\|<\frac{1}{8R_0}$.
		Proposition \ref{prop_main_thm} is applicable for this $a$ and $U=B(0,R_0)$.
	\end{proof}

		\begin{figure}[h]
		\begin{center}
		\includegraphics[width=1.0\columnwidth]{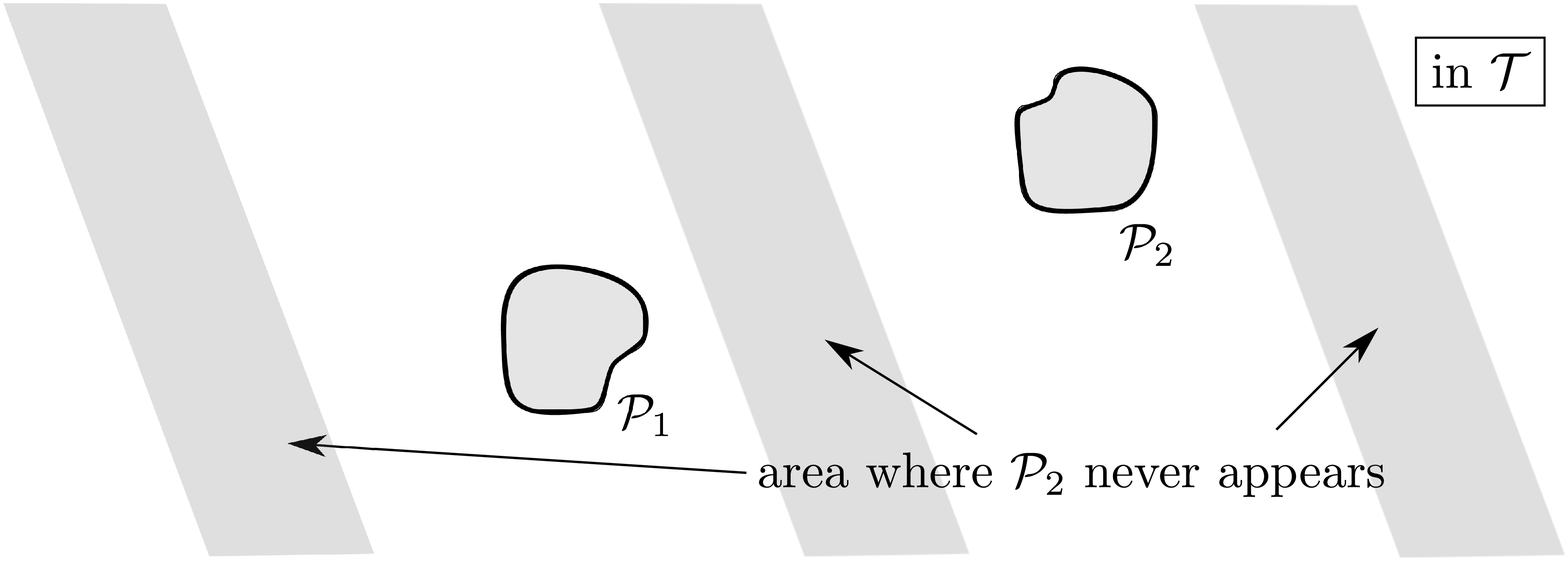}%
		\caption{Theorem \ref{main_thm1}}
		\label{figure_main_thm}
		\end{center}
		\end{figure}

	\begin{rem}	
		In Theorem \ref{main_thm1}, for any $R_0$ and $\e>0$, there are $R>0$ and $a\in A$ such that for any $\calP_1, \calP_2\in\bigcup_{L>R}\Pi_{\calT,L,\calA}$, 
		if we observe a translate of 
		$\calP_1$ in $\calT$, the area where copies of $\calP_2$ do not appear relative to this translate of $\calP_1$ includes a translate of $B(0,R_0)+\Ker\chi_a$
		(Figure \ref{figure_main_thm}).
		Thus there is a ``forbidden area'', the shape of which is described as follows.
		The kernel $\Ker\chi_a$ is equal to $\{a\}^{\perp}+\mathbb{Z}\frac{1}{\|a\|^2}a$.  The set $B(0,R_0)+\Ker\chi_a$ can be obtained by juxtaposing 
		translates of
		a ``band''
		$\{a\}^{\perp}+B(0,R_0)$ by vectors in $\mathbb{Z}\frac{1}{\|a\|^2}a$. The inequality \eqref{eq_relation_band_interval} describes the relation between 
		the width of each band and the interval of bands. (The width of each band is $2R_0$ and the intervals are approximately $8R_0$
		with respect to $\e$.)
	\end{rem}
	
	Next we proceed to Theorem \ref{main_thm2}. Before proving it we prepare a lemma and a definition.

	\begin{lem}
		If $U$ is a subset of $\mathbb{R}^d$ such that $-\overline{P}\subset U$ for all $P\in\mathcal{A}$, then
		$\{C_{\calT}(U,\mathcal{P})\mid \mathcal{P}\in\Pi_{\mathcal{T},R,\calA}\}$ covers $X_{\mathcal{T}}$ for any $R>0$.
	\end{lem}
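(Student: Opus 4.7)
Fix $\calS \in X_{\calT}$ and $R > 0$; I must exhibit $\calP \in \Pi_{\calT,R,\calA}$ and $x \in U$ with $\calP + x \subset \calS$. Since $\calS$ is a tiling of $\Rd$, the union of the closures of its tiles covers $\Rd$, so I can pick a tile $T \in \calS$ with $0 \in \overline{T}$. Because $\calS \in X_{\calT}$ and $\calT$ has finite tile type, every tile of $\calS$ is a translate of a member of $\calA$, hence $T = P + v_T$ for some $P \in \calA$. From $0 \in \overline{T} = \overline{P} + v_T$ I get $v_T \in -\overline{P} \subset U$, and I set $x := v_T$.

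To produce $\calP$, I transfer the local picture of $\calS$ around $v_T$ onto a translate of $\calT$. Form the finite patch $\calQ' := (\calS - v_T) \sqcap B(0,R)$, which is finite by Lemma~\ref{lem_FTT_sqcap_finite} and whose support contains $\overline{B(0,R)}$ since $\calS - v_T$ tiles $\Rd$. By the remark following the definition of the continuous hull, every bounded subpatch of an element of $X_{\calT}$ is $\calT$-legal, so there is $y \in \Rd$ with $\calQ' \subset \calT - y$. I then propose $\calP := (\calT - y) \cap B(0,R)$.

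The crux is the equality $(\calT - y) \cap B(0,R) = (\calS - v_T) \cap B(0,R)$. The inclusion $\supset$ is immediate from $\calQ' \subset \calT - y$ (and from $(\calS-v_T)\cap B(0,R) \subset \calQ'$). For $\subset$, I argue tile by tile: any $T'' \in \calT - y$ with $T'' \subset B(0,R)$ lies in $\supp\calQ'$, which is the union of the closures of the tiles of $\calQ' \subset \calT - y$; since distinct tiles of the tiling $\calT - y$ are disjoint open sets and the open set $T''$ cannot be contained in the nowhere dense union of their boundaries, $T''$ must coincide with a tile of $\calQ'$ and therefore belong to $(\calS - v_T) \cap B(0,R)$. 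This rigidity step, upgrading a patch inclusion to a local equality of tilings, is the one place where I expect the real work, though once one uses that tiles are open and pairwise disjoint within a tiling the conclusion is essentially forced.

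With the equality in hand, $0 \in P$ and $P \in \calS - v_T$ give $P \in \calQ' \subset \calT - y$, so $(\calT - y) \cap \calA \ni P$ is nonempty, confirming $\calP \in \Pi_{\calT,R,\calA}$. Finally $\calP + v_T = \calS \cap B(v_T, R) \subset \calS$ together with $v_T \in U$ yields $\calS \in C_{\calT}(U, \calP)$, and since $\calS \in X_{\calT}$ was arbitrary the claimed cover is established.
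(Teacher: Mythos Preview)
Your proof is correct and follows the same opening move as the paper: pick a tile $T\in\calS$ with $0\in\overline{T}$, write $T=P+x$ with $P\in\calA$, observe $x\in-\overline{P}\subset U$, and take $\calP$ to be the $R$-ball patch around the shifted origin. The paper's proof is a three-line version that simply sets $\calP=(\calS-x)\cap B(0,R)$ and asserts $\calP\in\Pi_{\calT,R,\calA}$ without further comment; this assertion tacitly uses the standing repetitivity hypothesis (the Setting of Section~\ref{section_general_tilings}), under which every $\calS\in X_{\calT}$ has the same local patches as $\calT$, so $\Pi_{\calS,R,\calA}=\Pi_{\calT,R,\calA}$.

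Your route to the same conclusion is genuinely different at this one point: rather than invoking repetitivity, you transfer the finite patch $\calQ'=(\calS-v_T)\sqcap B(0,R)$ into $\calT$ via the remark that bounded subpatches of hull elements are $\calT$-legal, and then run a rigidity argument (disjoint open tiles cannot hide inside a finite union of boundaries) to upgrade the inclusion $\calQ'\subset\calT-y$ to the equality $(\calT-y)\cap B(0,R)=(\calS-v_T)\cap B(0,R)$. This buys you a proof that does not need the repetitivity assumption at all, only finite tile type and the definition of $X_{\calT}$; the paper's version is shorter but leans on the ambient minimality. Both are valid, and your explicit rigidity step is a clean way to make the membership $\calP\in\Pi_{\calT,R,\calA}$ self-contained.
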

	
	\begin{proof}
		Take $\mathcal{S}\in X_{\calT}$. There is $T\in\mathcal{S}$ such that $0\in\overline{T}$.
		There exist $P\in\mathcal{A}$ and $x\in\Rd$ such that $T=P+x$. Set $\calP=(\mathcal{S}-x)\cap B(0,R)$, then $\calP\in\Pi_{\calT,R,\calA}$.
		Since $-x\in U$, we have $\mathcal{S}\in C_{\calT}(U,\calP)$.
	\end{proof}
	
	\begin{defi}\label{def_main_thm2}
		For a $\mathcal{T}$-legal patch $\mathcal{P}$, $R>0$ and $S\subset\mathbb{R}^d$, set
		\begin{align*}
			\Pi_{\mathcal{T}, R,\calA}(\mathcal{P},S)&=\{\mathcal{P}'\in\Pi_{\mathcal{T},R,\calA}\mid \text{$\mathcal{P}\cup(\mathcal{P}'+x)$ is \textit{not} $\mathcal{T}$-legal
			for any $x\in S$}\}.
		\end{align*}
	\end{defi}

	\begin{thm}\label{main_thm2}
		 Suppose $0$ is a limit point of the set of topological eigenvalues for 
		$\mathbb{R}^d\curvearrowright X_{\mathcal{T}}$. Then for any bounded neighborhood $U$ of $0$ in $\mathbb{R}^d$ which is
		large enough in the sense that $U\supset -\overline{P}$ for all proto-tile $P$, there are $R>0$ and $a\neq 0$ such that
		$\Pi_{\mathcal{T},R,\calA}(\mathcal{P}, x+U+\Ker\chi_a)\neq\emptyset$ for any $\mathcal{P}\in\Pi_{\mathcal{T},R,\calA}$ and $x\in\mathbb{R}^d$.
	\end{thm}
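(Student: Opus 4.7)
The plan is to combine Lemmas \ref{Lem1_main_thm} and \ref{Lem2_main_thm} with the covering lemma stated immediately before Definition \ref{def_main_thm2}. The key observation will be that the finite family $\{V(\calP') := f(C_\calT(U, \calP')) : \calP' \in \Pi_{\calT, R, \calA}\}$ covers $\mathbb{T}$ while each member is small (of $\rhoT$-diameter below $1/8$ by Lemma \ref{Lem1_main_thm}), so a pigeonhole argument on the circle produces, for any prescribed offset $x$, a $\calP'$ whose $\chi_a(x)$-rotate of $V(\calP')$ is disjoint from $V(\calP)$.

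First I would fix $\e = 1/8$ and, using the hypothesis that $0$ is a limit point of the topological eigenvalues together with the boundedness of $U$, choose a topological eigenvalue $a \in \Rd \setminus \{0\}$ with $\sup_{u \in U} \rhoT(\chi_a(u), 1) < \e/2 = 1/16$. Let $f \colon X_\calT \to \mathbb{T}$ be an associated continuous eigenfunction. Apply Lemma \ref{Lem1_main_thm} to obtain $R_0 > 0$ such that for every $\calP \in \bigcup_{L > R_0} \Pi_{\calT, L, \calA}$ the set $V(\calP)$ has $\rhoT$-diameter less than $1/8$; fix any $R > R_0$, choose $c(\calP) \in V(\calP)$ for each $\calP \in \Pi_{\calT, R, \calA}$, and observe that $V(\calP) \subset B_\mathbb{T}(c(\calP), 1/8)$.

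Second I would invoke the covering lemma (applicable since $U \supset -\overline{P}$ for all $P \in \calA$) to get $X_\calT = \bigcup_{\calP'} C_\calT(U, \calP')$, and then use surjectivity of $f$ onto $\mathbb{T}$ (its image is closed in $\mathbb{T}$ and invariant under $\chi_a(\Rd) = \mathbb{T}$, since $a \neq 0$) to deduce $\mathbb{T} = \bigcup_{\calP'} V(\calP') \subset \bigcup_{\calP'} B_\mathbb{T}(c(\calP'), 1/8)$. For given $\calP \in \Pi_{\calT, R, \calA}$ and $x \in \Rd$, apply this covering at the target $\theta := -\chi_a(x) c(\calP) \in \mathbb{T}$ to obtain $\calP' \in \Pi_{\calT, R, \calA}$ with $\rhoT(\theta, c(\calP')) < 1/8$; the triangle inequality then yields
\begin{align*}
\rhoT(\chi_a(x) c(\calP), c(\calP')) \geq \rhoT(\chi_a(x) c(\calP), -\chi_a(x) c(\calP)) - 1/8 = 1/2 - 1/8 > 1/4,
\end{align*}
so $B_\mathbb{T}(\chi_a(x) c(\calP), 1/8)$ and $B_\mathbb{T}(c(\calP'), 1/8)$ are disjoint, giving $\chi_a(x) V(\calP) \cap V(\calP') = \emptyset$.

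Finally, the eigenvalue equation $f(\calS - t) = \chi_a(t) f(\calS)$ yields the identity $V(\calP' + x) = \overline{\chi_a(x)} V(\calP')$, so the previous disjointness rewrites as $V(\calP) \cap V(\calP' + x) = \emptyset$; Lemma \ref{Lem2_main_thm} applied to the pair $(\calP, \calP' + x)$ (both $\calT$-legal, the second because translates of $\calT$-legal patches are $\calT$-legal) then shows that $\calP \cup (\calP' + x + y + z)$ is not $\calT$-legal for any $y \in \Ker\chi_a$ and $z \in U$, which is exactly $\calP' \in \Pi_{\calT, R, \calA}(\calP, x + U + \Ker\chi_a)$. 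The hard part will be calibrating the constants: we need $\sup_{u \in U} \rhoT(\chi_a(u), 1) < \e/2$ so that Lemma \ref{Lem1_main_thm} controls the diameters, while still ensuring the covering tolerance $\e$ leaves a strict gap $1/2 - \e > 2\e$ on $\mathbb{T}$; $\e = 1/8$ works, but anything beyond $\e = 1/6$ would fail. A secondary subtlety is verifying the identity $V(\calP' + x) = \overline{\chi_a(x)} V(\calP')$ against the paper's action convention $(\calS, x) \mapsto \calS - x$.
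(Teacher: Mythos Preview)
Your argument is correct and follows the same route as the paper: choose a small eigenvalue $a$, apply Lemma \ref{Lem1_main_thm} to make each $f(C_\calT(U,\calP'))$ small, use the covering lemma plus surjectivity of $f$ to find an ``antipodal'' $\calP'$, and finish with Lemma \ref{Lem2_main_thm}. The only difference is cosmetic: the paper covers $\mathbb{T}$ directly by the sets $f(C_\calT(U,\calP'))$ (each of diameter $<\varepsilon$) and picks $\calP'$ so that the antipode of a point of $\chi_a(x)f(C_\calT(U,\calP))$ lies in $f(C_\calT(U,\calP'))$, which makes any $\varepsilon<\tfrac14$ work; your detour through centers $c(\calP')$ and balls $B_\mathbb{T}(c(\calP'),\varepsilon)$ costs an extra $\varepsilon$ in the triangle inequality and forces your threshold $\varepsilon<\tfrac16$. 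Also, the paper applies Lemma \ref{Lem2_main_thm} to the pair $(\calP-x,\calP')$ rather than $(\calP,\calP'+x)$, but this is the same computation up to a global translation.
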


	\begin{proof}
		Let $U$ be a bounded neighborhood of 0 which is large enough. Take $\e\in (0,\frac{1}{4})$.
		For $U$ and $\e$, if $a$ is a topological eigenvalue which is small enough, we can apply Lemma \ref{Lem1_main_thm} to
		$U,a,\e$ and obtain $R>0$ such that $\diam(f(C_{\calT}(U,\calP)))\leq\e$ for any $\calP\in\Pi_{\calT,R,\calA}$.

		Take $\mathcal{P}\in\Pi_{\mathcal{T},R,\calA}$ and $x\in\mathbb{R}^d$, then since $a\neq 0$ and $f$ is surjective,
		$\{f(C_{\calT}(U,\mathcal{P}'))\mid \mathcal{P}'\in\Pi_{\mathcal{T},R,\calA}\}$ covers $\mathbb{T}$.
		Moreover $\diam f(C_{\calT}(U,\mathcal{P}'))<\e$ for each $\mathcal{P}'\in\Pi_{\mathcal{T},R,\calA}$ and so there is 
		$\mathcal{P}'\in\Pi_{\mathcal{T},R,\calA}$ such that
		\begin{align*}
			\chi_a(x)f(C_{\calT}(U,\mathcal{P}))\cap f(C_{\calT}(U,\mathcal{P}'))=\emptyset.
		\end{align*}
		By Lemma \ref{Lem2_main_thm}, $\mathcal{P}\cup(\mathcal{P}'+x+y+z)$ is not $\mathcal{T}$-legal for any $y\in U$ and $z\in\Ker\chi_a$.
		This means $\mathcal{P}'\in\Pi_{\mathcal{T},R,\calA}(\mathcal{P}, x+U+\Ker\chi_a)$.
	\end{proof}
	
	\begin{rem}
		This theorem says the following.  For any large $U$ there are $R$ and $a$ such that we can partially answer the following question:
		given a patch $\calP\in\Pi_{\mathcal{T},R,\calA}$ and a set $S\subset\Rd$, 
		is there a patch in $\Pi_{\mathcal{T},R,\calA}$ 
		such that we can rule out the possibility of its appearance in the area $S$ relative to copies of $\calP$?
		Proposition \ref{prop2_sequence_weak_mixing} which we will show later
		 says that, for certain sequences, 
		 given a word $W$ and large $S$, an appearance of any word is possible.
		 On the contrary, Theorem \ref{main_thm2} states that, for a repetitive FLC tiling of finite tile type which has sufficiently many eigenvalues,
		 given $\calP$ and $x\in\Rd$,
		  for some $\calP'\in\Pi_{\calT,R,\calA}$ an appearance of $\calP'$ in $S=x+U+\Ker\chi_a$ can be ruled out.
	\end{rem}
	
	\begin{rem}
		The cardinality of $\Pi_{\mathcal{T},R,\calA}(\mathcal{P}, x+U+\Ker\chi_a)$ is unknown. This set is the set of patches which cannot be observed
		in the area $x+U+\Ker\chi_a$ relative to a translate of $\mathcal{P}$ in $\calT$. 
		The larger the number 
		$\card\Pi_{\mathcal{T},R,\calA}(\mathcal{P}, x+U+\Ker\chi_a)/\card\Pi_{\mathcal{T},R,\calA}$ is, the more information we have by observing $\calP$.
		It is interesting to research a lower bound of this number for each example of tiling.
	\end{rem}

\section{Tilings from Substitution Rules with Pisot-family assumption}\label{section_substitution_Pisot}

		In Subsection \ref{subsection_Necessary_results_for_self-affine}
		 we introduce substitution rules, which generate interesting examples of tilings called self-affine tilings.
		Next, in Subsection \ref{subsection_Pseudo-self-affine}, we introduce pseudo-substitutions and pseudo-self-affine tilings.
		Pseudo-substitution is also called substitution with amalgamation.
	For tilings constructed from some of these rules, more can be said than Theorem \ref{main_thm1}. 
	We prove the main theorems for certain pseudo-self-affine tilings in Subsection \ref{subsection_main_result_for_pseudo-self-affine}.

	\subsection{Necessary Results for Self-affine Tilings}\label{subsection_Necessary_results_for_self-affine}
		
	\begin{defi}\label{def-substitution}
		A substitution rule of $\mathbb{R}^d$ is a triple $(\mathcal{A}, \phi, \omega)$ where,
		\begin{itemize}
			\item $\mathcal{A}$ is a finite set of tiles in $\mathbb{R}^d$ that contain the origin,
			\item $\phi$ is an expansive linear map, that is, a linear map $\phi\colon\mathbb{R}^d\rightarrow\mathbb{R}^d$ the eigenvalues of which are
			all outside the closed unit disk, and
			\item $\omega$ is a map from $\mathcal{A}$ to 
			\begin{align*}
				\{\mathcal{P}\mid \text{$\mathcal{P}$ is a patch and any $T\in\mathcal{P}$ is a translate of a member of $\mathcal{A}$}\}
			\end{align*} 
			such that
			\begin{align*}
				\supp \omega(P)=\phi \overline{P}
			\end{align*}
			for each $P\in\mathcal{A}$.
		\end{itemize}
	\end{defi}
	
	Elements of $\mathcal{A}$ are called proto-tiles of the substitution. 
	The map $\omega$, called substitution map, 
	is a map obtained by first expanding each proto-tile and then 
	decomposing it to obtain a patch consisting of translates of proto-tiles (see Example \ref{ex_penrose}).
	
	Definition \ref{def-substitution} is for substitution with the group $\mathbb{R}^d$.
	One can also consider a substitution rule for a closed subgroup of $\mathbb{R}^d\rtimes \Od$ bigger than $\mathbb{R}^d$, 
	for example Radin's pinwheel tiling
	(\cite{MR1283873}). We do not deal with such substitutions in this article because we use a property on distribution of eigencharacters
	for our main theorem. If the group for a given substitution is $\mathbb{R}^d$, Theorem \ref{Thm-Solomyak} and Theorem \ref{thm_lee_solomyak}
	are applicable. However the corresponding results for groups bigger than $\mathbb{R}^d$ are not known.
	Hence here we do not deal with substitution rules with groups bigger than $\Rd$.
	
	One can also consider rules that are similar to substitution rules given above, where enlarged tiles are not strictly decomposed, but replaced by
	a patch that does not necessarily cover or is contained in the enlarged tiles (\cite{MR2183221},  \cite{MR1755727},  \cite{MR1854103}).
	In this article we call such rules pseudo-substitution rules, and in the literature it is also called substitution with amalgamation.
	The substitution with kites and darts which generates Penrose tilings is such an example.
	The tilings obtained from such rules are called pseudo-self-affine tilings.
	We deal with pseudo-self-affine tilings in the next subsection.

	We extend the substitution map $\omega$ to translates of proto-tiles by 
	\begin{align}
		\omega(P+x)=\omega(P)+\phi (x)\label{eq_ext_omega}
	\end{align}
	for each $P\in\mathcal{A}$ and $x\in\mathbb{R}^d$.
	For any patch $\mathcal{P}$ consisting of translates of proto-tiles, set $\omega(\mathcal{P})$ by
	\begin{align*}
		\omega(\calP)=\bigcup_{T\in\calP}\omega(T).
	\end{align*}
	The above extension (\ref{eq_ext_omega}) is justified by the fact that $\omega(\calP)$ is again a patch.
	
	\begin{defi}
		Let $(\mathcal{A}, \phi, \omega)$ be a substitution rule. 
		Set
		\begin{align*}
			X_{\omega}=\{\text{$\mathcal{T}$: tiling} \mid \text{for any finite $\mathcal{P}\subset\mathcal{T}$ there are $n>0, P\in\mathcal{A}$ and 
			$x\in\mathbb{R}^d$}\\
			\text{ such that $\mathcal{P}+x\subset\omega^n(P)$}\}.
		\end{align*}
		$\mathbb{R}^d$ acts on this space and the topological dynamical system $(X_{\omega},\mathbb{R}^d)$ is called the corresponding tiling dynamical 
		system for the substitution.
	\end{defi}

	\begin{defi}
		Let $(\mathcal{A}, \phi, \omega)$ be a substitution rule.
		A tiling $\mathcal{T}$ consisting of proto-tiles in $\mathcal{A}$ is called a fixed point of the substitution $(\mathcal{A}, \phi, \omega)$ if 
		$\omega (\mathcal{T})=\mathcal{T}$.
		A repetitive tiling $\calT$ of FLC is called a self-affine tiling if there is a substitution for which $\calT$ is a fixed point.
	\end{defi}
	
	\begin{rem}
		A substitution rule may not have any fixed point, but there are $n>0$ and a tiling $\mathcal{T}$ such that
		$\omega^n(\mathcal{T})=\mathcal{T}$ (Lemma \ref{lem_existence_fixed_point}). Since in many cases we may replace the original substitution 
		$(\mathcal{A}, \phi, \omega)$ with $(\mathcal{A}, \phi^n, \omega^n)$, it suffices to develop a theory for substitutions which 
		admit  fixed points.
	\end{rem}
	

	
	\begin{defi}
		A substitution rule $(\mathcal{A}, \phi, \omega)$ is primitive if there is $n\in\Zpo$ such that, for each $P,P'\in\mathcal{A}$, there is
		 $x\in\mathbb{R}^d$ for which
		$P'+x\in\omega^n(P)$.
	\end{defi}
	
	\begin{lem}\label{lem_existence_fixed_point}
		If a substitution rule $(\mathcal{A}, \phi, \omega)$ 
		is primitive, then there is $n>0$ such that $(\mathcal{A}, \phi^n, \omega^n)$ admits a repetitive fixed point.  
	\end{lem}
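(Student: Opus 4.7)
The plan is to construct a fixed tiling of $\omega^n$ as a nested union $\calT := \bigcup_{k \ge 0} \omega^{kn}(\{P + x\})$ starting from a single seed tile $P + x$ satisfying both $P + x \in \omega^n(P + x)$ and $0 \in P + x$. The translation equivariance $\omega^n(S + y) = \omega^n(S) + \phi^n(y)$ (immediate by induction from \eqref{eq_ext_omega}) reformulates the first condition as $P + v \in \omega^n(P)$, where $v := x - \phi^n(x) = -(\phi^n - I)(x)$. Expansivity of $\phi$ forces every eigenvalue of $\phi^n$ to lie outside the closed unit disk, so $\phi^n - I$ is invertible and $x = -(\phi^n - I)^{-1}v$; moreover, the Neumann series $(\phi^n - I)^{-1} = \sum_{k \ge 1}\phi^{-kn}$ shows $\|(\phi^n - I)^{-1}\| \to 0$ as $n \to \infty$. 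By primitivity, I fix $m_0$ such that $\omega^{m_0}(P')$ contains a translate of every proto-tile for each $P' \in \calA$, and take $n$ to be a multiple $N m_0$; then $\omega^n(P)$ always contains at least one $v$ with $P + v \in \omega^n(P)$.

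Once a seed $P + x$ with $P + x \in \omega^n(P + x)$ is fixed, the relation $\{P + x\} \subseteq \omega^n(\{P + x\})$ iterates to an increasing chain $\calP_k := \omega^{kn}(\{P + x\})$, $\calP_0 \subseteq \calP_1 \subseteq \cdots$. Each $\calP_k$ is a legitimate patch (its tiles are pairwise disjoint because $\calP_k$ tiles $\phi^{kn}(\overline{P + x})$), so $\calT := \bigcup_k \calP_k$ is a patch as well, and tilewise computation gives $\omega^n(\calT) = \bigcup_k \calP_{k+1} = \calT$. The support $\bigcup_k \phi^{kn}(\overline{P + x})$ equals $\Rd$ as soon as $0 \in P + x$: then $B(0, \e) \subset P + x$ for some $\e>0$, and expansivity sends $\phi^{-kn}(y)$ into $B(0, \e)$ for every $y \in \Rd$ and every $k$ large, giving $y \in \phi^{kn}(\overline{P+x})$.

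The main obstacle is arranging $-x \in P$. Since $P + v$ is a nonempty open tile contained in the support $\phi^n(\overline{P})$, regularity of tiles forces $P + v \subseteq \phi^n(P)$, so $v \in \phi^n(P)$ and $p := \phi^{-n}(v) \in P$. Direct calculation from $v = \phi^n(p)$ gives
\begin{align*}
-x = (\phi^n - I)^{-1}\phi^n(p) = p + (\phi^n - I)^{-1}(p).
\end{align*}
As $n = Nm_0 \to \infty$ the perturbation $(\phi^n - I)^{-1}(p)$ shrinks uniformly in $p \in \overline{P}$, so $-x$ becomes arbitrarily close to $p$. Meanwhile, primitivity plus iteration means $\omega^n(P)$ contains exponentially many copies of $P$ for large $n$, so the corresponding positions $p \in P$ cannot all accumulate on $\partial P$; selecting a $v$ whose $p$ lies in a compact subset of $P$ at distance greater than $\|(\phi^n - I)^{-1}\| \cdot \diam P$ from $\partial P$ yields $-x \in P$. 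This choice is the technical heart of the argument, relying on the interplay between primitivity (abundance of copies of $P$) and expansivity (contraction of perturbations).

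Finally, repetitivity of $\calT$ follows softly from primitivity. Any finite $\calQ \subset \calT$ sits inside some $\calP_k = \omega^{kn}(P) + \phi^{kn}(x)$; by primitivity, every $\omega^{k'n}(Q)$ with $k' - k$ sufficiently large contains translates of $\omega^{kn}(P)$, hence of $\calQ$. Since $\calT = \omega^{k'n}(\calT) = \bigcup_{T \in \calT} \omega^{k'n}(T)$ and the tiles of $\calT$ have uniformly bounded diameter, translates of $\calQ$ appear with bounded gaps throughout $\Rd$, which is repetitivity.
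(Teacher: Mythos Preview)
The paper gives no argument of its own here --- it defers to Robinson's survey (Theorems~5.8 and~5.10). Your construction is precisely the standard one found there, so in spirit the approaches coincide.

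The overall architecture is sound: find $n$ and a seed $P+x$ with $P+x\in\omega^n(P+x)$ and $0\in P+x$, set $\calT=\bigcup_{k\geq0}\omega^{kn}(\{P+x\})$, verify $\omega^n(\calT)=\calT$ and $\supp\calT=\Rd$, and deduce repetitivity from primitivity. Your identity $-x=p+(\phi^n-I)^{-1}(p)$ with $p=\phi^{-n}(v)$ is correct, as is the repetitivity paragraph.

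There is, however, a real gap in the step securing $0\in P+x$. First, you invoke ``regularity of tiles'' to conclude $P+v\subseteq\phi^n(P)$ and hence $p\in P$, but in this paper a tile is only a nonempty bounded open set, with no assumption that $P=(\overline{P})^\circ$; from $P+v\subseteq\phi^n(\overline{P})$ you obtain only $p\in\overline{P}$. Second, and more substantially, the assertion that among the exponentially many copies of $P$ inside $\omega^n(P)$ at least one has its $p$ at distance exceeding $\|(\phi^n-I)^{-1}\|\cdot\diam P$ from $\partial P$ is stated without justification. The natural volume argument --- the rescaled copies $\phi^{-n}(P)+p$ are disjoint in $\overline{P}$ with total measure bounded away from zero, hence cannot all lie in a shrinking neighbourhood of $\partial P$ --- requires $m(\partial P)=0$, which again is not assumed. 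As written this passage is a plausible heuristic, not a proof; it is exactly the place where either an extra hypothesis on the tiles or a genuinely different device (for instance, growing the patch from a legal \emph{pair} of adjacent seed tiles straddling the origin, or passing to a limit point in the tiling space) is needed.
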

	\begin{proof}
	Theorem 5.8, Theorem 5.10 and the following paragraph of \cite{Ro}.
	\end{proof}

	We are interested in substitution rules because they give interesting examples of tilings. Then it is important to know when its repetitive fixed points
	 have FLC.

	\begin{defi}
		A substitution rule $(\mathcal{A},\phi,\omega)$ is of FLC if for any $R>0$, the set
		\begin{align*}
			\{\omega^n(P)\cap B(x,R)\mid P\in\mathcal{A}, n\in\mathbb{Z}_{>0}, x\in\mathbb{R}^d\}/{\simt}
		\end{align*}
		is finite.
	\end{defi}
	
	\begin{lem}
		A substitution rule which has a repetitive fixed point has FLC if and only if any of its repetitive fixed point has FLC.
	\end{lem}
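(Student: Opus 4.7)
The plan is to exploit the fixed-point identity $\omega^n(\calT)=\calT$ to transfer the FLC property between the substitution and the repetitive fixed point, using the elementary observation that any patch of at most $N$ tiles has at most $2^N$ sub-patches.

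For the direction $(\Leftarrow)$, suppose a repetitive fixed point $\calT$ is FLC. For each $P\in\calA$ that occurs in $\calT$ (all of $\calA$ under the primitivity implicit in the existence of a repetitive fixed point), pick $y_P\in\Rd$ with $P+y_P\in\calT$; then $\omega^n(P)+\phi^n y_P=\omega^n(P+y_P)\subset\omega^n(\calT)=\calT$, so $\omega^n(P)\cap B(x,R)$, translated by $\phi^n y_P$, is a sub-patch of $\calT\cap B(x+\phi^n y_P,R)$. By Lemma~\ref{lem_FTT_sqcap_finite} the latter has at most $N$ tiles and hence at most $2^N$ sub-patches; combined with FLC of $\calT$ this forces $\{\omega^n(P)\cap B(x,R)\}/{\simt}$ to be finite.

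For the direction $(\Rightarrow)$, suppose the substitution is FLC and $\calT$ is a repetitive fixed point. By Proposition~\ref{FLC_iikae} it suffices to show $\{\calT\cap B(x,R)\}/{\simt}$ is finite for each $R>0$. The crucial step is the \emph{embedding claim}: for every finite $\calP\subset\calT$ there exist $n\geq 1$, $P\in\calA$ and $z\in\Rd$ with $\calP+z\subset\omega^n(P)$ (i.e.\ $\calT\in X_\omega$). Granting this, apply the claim to $\calP=\calT\cap B(x,R)$: since $\supp\calP\subset B(x,R+D)$ with $D=\max_{P\in\calA}\diam P$, the translate $\calP+z$ is a sub-patch of $\omega^n(P)\cap B(x+z,R+D)$. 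Substitution FLC at scale $R+D$ bounds the $\simt$-classes of the containing patches, and the $2^N$-sub-patch count then finishes the enumeration exactly as in $(\Leftarrow)$.

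The embedding claim is the main obstacle, and my plan is to prove it by pitting repetitivity against the expansivity of $\phi$. Fix any $P\in\calA$ with $P+y_P\in\calT$; then $\omega^n(P)+\phi^n y_P\subset\calT$ is a sub-patch of $\calT$ whose support $\phi^n\overline{P}+\phi^n y_P$ has in-radius growing without bound as $n\to\infty$. Repetitivity furnishes a constant $M$ such that every ball $B(y,M)\subset\Rd$ contains some $x$ with $\calP+x\subset\calT$. Taking $n$ large enough that the in-radius of $\phi^n\overline{P}$ exceeds $M+\diam\calP$ and choosing $x$ so that $\calP+x\subset\calT$ and $\supp(\calP+x)$ lies entirely inside $\supp(\omega^n(P)+\phi^n y_P)$, the sub-patch $\omega^n(P)+\phi^n y_P$ must contain every tile of $\calP+x$, because it tiles its own support with tiles of $\calT$ and two tiles of $\calT$ cannot overlap. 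Setting $z=x-\phi^n y_P$ yields $\calP+z\subset\omega^n(P)$, completing the claim.
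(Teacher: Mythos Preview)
The paper states this lemma without proof, so there is no argument to compare against; your approach is the natural one and is essentially correct.

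Two minor issues are worth flagging. In the $(\Leftarrow)$ direction you claim every $P\in\calA$ occurs in $\calT$ ``under the primitivity implicit in the existence of a repetitive fixed point''. Primitivity is \emph{not} a consequence of having a repetitive fixed point (a proto-tile may simply never be used), and without it the direction can fail for the unused tiles. Either assume primitivity explicitly or restrict $\calA$ to the tiles that actually occur in $\calT$; the paper almost certainly intends the former, since primitivity is hypothesised in every subsequent application. Second, your reference to Proposition~\ref{FLC_iikae} in the $(\Rightarrow)$ direction is a slip: finiteness of $\{\calT\cap B(x,R)\}/{\simt}$ is the definition of FLC, and your $2^N$-sub-patch count is really an instance of Lemma~\ref{lem_family_patch_finite}.

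The embedding claim is the heart of the matter and your proof of it is sound: repetitivity places a translate $\calP+x\subset\calT$ with support well inside $\phi^n\overline{P}+\phi^n y_P$, and since $\omega^n(P)+\phi^n y_P\subset\calT$ covers that region with tiles of $\calT$, every tile of $\calP+x$ (being a tile of $\calT$ meeting the interior) must coincide with one of them. This is exactly the argument one expects, and it is what the paper leaves to the reader.
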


	Here we mention an important consequence of primitivity.
	\begin{thm}[\cite{Solomyak_dynamics}, \cite{MR1976605}]
		If a substitution rule $(\mathcal{A}, \phi, \omega)$ is primitive and has FLC, then the corresponding dynamical system $(X_{\omega}, \mathbb{R}^d)$
		is uniquely ergodic, that is, it admits a unique invariant Borel probability measure.
	\end{thm}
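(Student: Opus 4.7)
The plan is to establish unique ergodicity by verifying the equivalent condition of uniform patch frequency (UPF), i.e.\ that for every finite $\calT$-legal patch $\calP$ and every van Hove sequence $(A_n)$,
\begin{align*}
	\lim_{n\to\infty} \frac{L(\calP, \calT\cap A_n)}{m(A_n)}
\end{align*}
exists and is independent of both $\calT\in X_\omega$ and the choice of $(A_n)$. The equivalence between UPF and unique ergodicity is already invoked in the introduction, so the task reduces to producing UPF for any fixed point of the substitution.

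First I would form the substitution matrix $M\in\mathbb{Z}_{\geq 0}^{\calA\times\calA}$ by setting $M_{P,P'}=\card\{x\in\Rd\mid P+x\in\omega(P')\}$. Primitivity of $(\calA,\phi,\omega)$ says exactly that some power $M^n$ has strictly positive entries, so by the Perron--Frobenius theorem $M$ has a simple eigenvalue $\lambda$ of maximum modulus with strictly positive left and right eigenvectors. Volume considerations (comparing $m(\supp\omega(P))$ with $|\det\phi|\,m(P)$) identify $\lambda=|\det\phi|$. This immediately gives uniform convergence of the proto-tile frequencies inside large supertiles:
\begin{align*}
	\frac{L(\{P\},\omega^n(P'))}{|\det\phi|^n m(P')}\longrightarrow f(P)
\end{align*}
uniformly in $P'\in\calA$, where $f(P)>0$ is the normalized Perron right eigenvector coordinate.

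Next I would lift this from proto-tiles to arbitrary finite $\calT$-legal patches. For such a $\calP$, choose $N$ large enough that a translate of $\calP$ lies in some $\omega^N(P_0)$. For $n\geqq N$, decompose $\omega^{n+1}(P')$ into $\omega^N$-supertiles sitting inside the $\omega$-decomposition iterated $n+1-N$ times; each translate of $\calP$ in $\omega^{n+1}(P')$ either lies entirely inside one $\omega^N$-supertile or straddles the boundary between two. The bulk contributions are governed by the matrix $M^{n+1-N}$ applied to the vector $(L(\calP,\omega^N(Q)))_{Q\in\calA}$, which by Perron--Frobenius gives the bulk frequency $f(\calP)$ uniformly in $P'$. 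The boundary contributions are bounded using FLC (only finitely many patch-types straddle an $\omega^N$-boundary) and expansivity of $\phi$ (the measure of a tubular neighborhood of the boundary of $\omega^n(P')$ is $O(\sigma^n)$ with $\sigma<|\det\phi|$); hence these contributions are negligible as $n\to\infty$.

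Finally, to transfer from supertile windows to general van Hove windows $A_n$, I would invoke recognizability (Theorem \ref{thm_recognizability}) to uniquely decompose any $\calS\in X_\omega$ into $\omega^{k}$-supertiles for each $k$; then for a suitable sequence $k(n)\to\infty$ with $|\det\phi|^{k(n)}=o(m(A_n))$ and with the supertile diameters negligible compared to $\diam A_n$, approximate $\calS\cap A_n$ by the union of those $\omega^{k(n)}$-supertiles contained in $A_n$. The van Hove property bounds the error by a quantity of order $m(\partial^{r}A_n)/m(A_n)\to 0$ (with $r$ proportional to the supertile diameter), yielding the desired limit $f(\calP)$ uniformly in $\calS$ and independently of $(A_n)$. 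The main obstacle is the boundary estimate just sketched: one must trade off $k(n)$ against $m(A_n)$ so that both the Perron--Frobenius error in the bulk and the van Hove boundary error vanish simultaneously, and this rests delicately on the strict inequality between the spectral radius of $\phi$ on $\Rd$ and the Perron eigenvalue $|\det\phi|$ of $M$, together with FLC controlling the number of patch types that can appear near any boundary.
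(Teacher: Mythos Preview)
The paper does not give a proof of this theorem; it is simply quoted from \cite{Solomyak_dynamics} and \cite{MR1976605} with no accompanying argument, so there is no in-paper proof to compare against.

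Your outline follows the standard Perron--Frobenius route taken in those references and is essentially sound, but one step deserves correction. In the last paragraph you invoke recognizability (Theorem~\ref{thm_recognizability}) to decompose an arbitrary $\calS\in X_\omega$ into level-$k$ supertiles. Recognizability, however, is equivalent to the existence of a non-periodic element in $X_\omega$, and that is not among the hypotheses of the theorem you are proving; in the periodic case $\omega$ is not injective and no such unique hierarchy exists. This appeal is also unnecessary: you already reduced to establishing UPF for a fixed point $\calT$ (which exists after passing to a power, by Lemma~\ref{lem_existence_fixed_point}), and for $\calT$ the supertile decomposition at every level is given tautologically by $\omega^k(\calT)=\calT$. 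Carrying out the van Hove approximation for this single $\calT$ already yields unique ergodicity for $X_\calT=X_\omega$ via minimality, so you should run the final step with $\calT$ rather than a general $\calS$ and drop the reference to recognizability altogether.
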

	
	Next we mention the recognizability for self-affine tilings.
	
	\begin{thm}[\cite{MR1637896}]\label{thm_recognizability}
		Let $(\calA,\phi,\omega)$ be a primitive substitution rule of FLC.
		Then $\omega\colon X_{\omega}\rightarrow X_{\omega}$ is injective if and only if there is $\calT\in X_{\omega}$ which is non-periodic.
	\end{thm}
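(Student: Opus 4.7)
I plan to prove both directions, with the forward recognizability implication being the main obstacle.

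\textbf{Easy direction} ($\omega$ injective $\Rightarrow$ some tiling in $X_\omega$ is non-periodic): I argue the contrapositive. Suppose every $\calT\in X_\omega$ is periodic. By primitivity and Lemma \ref{lem_existence_fixed_point} there exist $n\in\Zpo$ and a repetitive fixed point $\calT_0$ of $\omega^n$ in $X_\omega$, and the tiling dynamical system $(X_\omega,\mathbb{R}^d)$ is minimal. Pick a nonzero period $v$ of $\calT_0$; for any other $\calS\in X_\omega$, writing $\calS=\lim(\calT_0+x_k)$ and using $(\calT_0+x_k)+v=\calT_0+x_k$ gives $\calS+v=\calS$, so every tiling in $X_\omega$ shares the period $v$. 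Therefore $\Lambda:=\{w\in\mathbb{R}^d\mid \calT_0+w=\calT_0\}$ is a nonzero discrete subgroup of $\mathbb{R}^d$. Applying $\omega^n$ to $\calT_0+w=\calT_0$ and using $\omega^n(\calS+u)=\omega^n(\calS)+\phi^n(u)$ gives $\phi^n(\Lambda)\subseteq\Lambda$. The subspace $V:=\spa_{\mathbb{R}}(\Lambda)$ is $\phi^n$-invariant and, on a $\mathbb{Z}$-basis of $\Lambda$, $\phi^n|_V$ is represented by an integer matrix. Its eigenvalues are eigenvalues of $\phi^n$, hence all have modulus $>1$; so $|\det(\phi^n|_V)|\geq 2$ and $\phi^n(\Lambda)\subsetneq\Lambda$. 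Choosing $w\in\Lambda\setminus\phi^n(\Lambda)$ and setting $u:=(\phi^n)^{-1}(w)\in V\setminus\Lambda$, I get $\calT_0+u\neq\calT_0$ but $\omega^n(\calT_0+u)=\calT_0+w=\calT_0=\omega^n(\calT_0)$. Thus $\omega^n$, and consequently $\omega$, is not injective.

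\textbf{Hard direction} (some $\calT\in X_\omega$ non-periodic $\Rightarrow$ $\omega$ injective): By the stabilizer argument of the previous paragraph, non-periodicity of one tiling upgrades to non-periodicity of every tiling in $X_\omega$. This direction is Solomyak's recognizability theorem, and I would follow the argument of \cite{MR1637896}. Suppose for contradiction that $\omega(\calT_1)=\omega(\calT_2)=:\calS$ with $\calT_1\neq\calT_2$. Then the two supertile partitions of $\calS$ induced by $\calT_1$ and $\calT_2$ differ, producing, after a translation that places a mismatched point near the origin, a bounded nonzero ``mismatch vector'' between a tile of $\calT_1$ and a tile of $\calT_2$ whose $\omega$-images overlap in $\calS$. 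FLC guarantees only finitely many local mismatch types occur; repetitivity lets one find such mismatches on arbitrarily large balls; compactness of $X_\omega\times X_\omega$ then extracts a convergent pair $(\calT_1-x_j,\calT_2-y_j)\to(\calS^{\infty},\calS^{\infty})$ with $y_j-x_j$ tending to a fixed nonzero vector $w^\infty$, forcing $\calS^\infty+w^\infty=\calS^\infty$ for the limit tiling $\calS^\infty\in X_\omega$, contradicting the universal non-periodicity.

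The main obstacle is precisely the hard direction: making the mismatch-propagation argument rigorous — in particular, ensuring the discrepancy survives to the limit as a genuine nonzero period without being dissolved by the expansive action of $\phi$ — is the technical heart of the recognizability theorem. My plan is to invoke and rely on the construction in \cite{MR1637896} rather than reprove it; the easy direction above is the novel content I would need to write out in full.
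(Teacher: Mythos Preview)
The paper does not prove this theorem at all: it is stated with the citation \cite{MR1637896} and no argument is given, so your plan for the hard direction---invoke Solomyak's recognizability theorem from \cite{MR1637896}---is exactly what the paper does.

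Your easy direction is additional content the paper omits, and the argument is correct. The key step, that $\phi^n(\Lambda)\subsetneq\Lambda$, is handled cleanly: $\phi^n|_V$ acts on the $\mathbb{Z}$-basis of $\Lambda$ by an integer matrix whose eigenvalues (being eigenvalues of $\phi^n$) all lie outside the unit disk, so $|\det(\phi^n|_V)|\geq 2$ and the index is at least $2$. Two small remarks: (i) discreteness of $\Lambda$ deserves a word---it follows because a bounded tile $T$ cannot satisfy $T+w=T$ for $w\neq 0$, so the closed subgroup $\Lambda$ has no nontrivial connected component; (ii) your step showing that all tilings in $X_\omega$ share the same period lattice (via minimality and limits) is not actually needed for the easy direction itself, but you correctly reuse it at the start of the hard direction to upgrade ``some non-periodic'' to ``all non-periodic,'' which is the hypothesis Solomyak's argument requires.
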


	\begin{ex}[Figure\ref{Penrose_substitution}]\label{ex_penrose}
		Set $\tau=\frac{1+\sqrt{5}}{2}$. Take the interior of the triangle which has side-length 1,1, and $\tau$, and remove one point anywhere from the left.
		Also take the interior of the triangle of the side-length $\tau, \tau$ and $1$, and remove one point from the right. The proto-tiles of this substitution are
		the copies of these two punctured triangles by $2n\pi/10$-rotations and flip, where $n=0,1,\ldots ,9$.
		
		The expansion map is $\tau I$, where $I$ is the identity map. 
		The map $\omega$ is depicted in Figure \ref{Penrose_substitution}. The image of the other proto-tiles
		by $\omega$ is defined accordingly, so that $\omega$ and rotation, $\omega$ and flip will commute.

		Self-affine (in this case self-similar) tilings for this substitution are called Robinson triangle tilings.
 		\begin{figure}[h]
		\begin{center}
		\includegraphics[width=7cm]{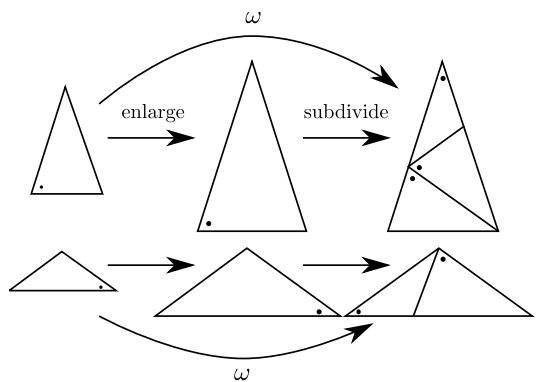}
		\caption{Example of substitution}
		\label{Penrose_substitution}
		\end{center}
		\end{figure}
	\end{ex}
Existence of non-trivial eigenfunctions is assured for tiling dynamical systems coming from certain class of substitution.
	Recall that if $\mathcal{T}$ is a tiling, a vector $z\in\mathbb{R}^d$ is called a return vector for $\mathcal{T}$ if there is 
	$T\in\mathcal{T}$ such that $T+z\in\mathcal{T}$.

	\begin{thm}[\cite{So}, Theorem 3.13]\label{Thm-Solomyak}
		Let $(\mathcal{A}, \phi, \omega)$ be a primitive tiling substitution of FLC.
		Assume there is a repetitive fixed point $\mathcal{T}$. 
		Then for $a\in\mathbb{R}^d$, the following are equivalent:
		\begin{enumerate}
			\item $a$ is a topological eigenvalue for the topological dynamical system $(X_{\omega}, \mathbb{R}^d)$;
			\item $a$ is a measurable eigenvalue for the measure-preserving system $(X_{\omega},\mathbb{R}^d, \mu)$,
				where $\mu$ is the unique invariant probability measure;
			\item $a$ satisfies the following two conditions:
				\begin{enumerate}
					\item For any return vector $z$ for $\mathcal{T}$,  we have
						\begin{align}
							\lim_{n\rightarrow\infty}e^{2\pi i\langle\phi^n(z), a\rangle}=1, \label{1st_condition_for_being_eigenvalue}
						\end{align}
						and
					\item if $z\in\mathbb{R}^d$ and $\mathcal{T}+z=\mathcal{T}$, then
						\begin{align*}
							e^{2\pi i\langle z, a\rangle}=1.
						\end{align*}
				\end{enumerate}
		\end{enumerate}
	\end{thm}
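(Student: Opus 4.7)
The plan is to arrange the equivalences as $(1)\Rightarrow(2)$, $(2)\Rightarrow(1)$, $(1)\Rightarrow(3)$, $(3)\Rightarrow(1)$. The implication $(1)\Rightarrow(2)$ is immediate, since any continuous function on the compact space $X_{\omega}$ is bounded and hence in $L^2(\mu)$. For $(2)\Rightarrow(1)$ I would invoke unique ergodicity of $(X_{\omega},\Rd)$, valid for primitive FLC substitutions, together with the standard fact that in a uniquely ergodic minimal system each $L^2$-eigenfunction admits a continuous representative. This reduces the substantive work to $(1)\Leftrightarrow(3)$.

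For $(1)\Rightarrow(3)$, fix a continuous $\mathbb{T}$-valued eigenfunction $f$ with eigencharacter $\chi_a$. Condition (3b) is immediate: since $\calT+z=\calT$ is equivalent to $\calT-z=\calT$, we obtain $f(\calT)=f(\calT-z)=\chi_a(z)f(\calT)$, forcing $\chi_a(z)=1$. For (3a), given a return vector $z$ pick $T\in\calT$ with $T+z\in\calT$ and any $p\in T$. Then $\omega^n(T)\subset\calT$ and $\omega^n(T)+\phi^n(z)=\omega^n(T+z)\subset\calT$, so the tilings $\calT-\phi^n(p)$ and $\calT-\phi^n(p)-\phi^n(z)$ both contain the patch $\omega^n(T)-\phi^n(p)$, which swallows any fixed ball around the origin by expansiveness of $\phi$. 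These tilings therefore converge together in the tiling metric; uniform continuity of $f$ and the eigenvalue relation then give $\chi_a(\phi^n(p))[\chi_a(\phi^n(z))-1]f(\calT)\to 0$, so $\chi_a(\phi^n(z))\to 1$.

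The substantive direction is $(3)\Rightarrow(1)$, which I would attack by direct construction. Normalizing $f(\calT)=1$, set $f(\calT-x)=\chi_a(x)$ on the $\Rd$-orbit of $\calT$. This is well-defined precisely by (3b): $\calT-x=\calT-y$ forces $y-x$ to be a period of $\calT$, hence $\chi_a(y-x)=1$. The prescription is manifestly equivariant, and by minimality the orbit is dense in $X_{\omega}$, so the question reduces to extending $f$ continuously to $X_{\omega}$.

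The heart of the matter, where condition (3a) becomes indispensable, is this uniform continuity. Suppose $\calT-x$ and $\calT-y$ are close in the tiling metric; unpacking the definition, $\calT$ contains two large matching patches whose positions differ by a vector approximately equal to $y-x$, so $y-x$ is an approximate return vector of a large patch. The plan is to use recognizability (Theorem \ref{thm_recognizability}) iteratively: $\omega$ is invertible on $X_{\omega}$, so $\calT$ admits a unique decomposition into level-$n$ super-tiles $\omega^n(P)+w$, and a matching pair of patches of radius $R$ in $\calT$ corresponds, after de-substituting $n$ times, to a matching pair of shrunk radius in some $\calT^{(n)}\in X_{\omega}$. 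Choosing $n$ so that the de-substituted radius is of order one, one obtains $y-x=\phi^n(z)$ modulo periods and small errors, with $z$ drawn from a \emph{finite} set of return vectors supplied by FLC. Condition (3a), applied uniformly over this finite set, then forces $\chi_a(y-x)\approx 1$ as required. The main technical obstacle is the bookkeeping of this hierarchical correspondence: quantifying how the agreement radius controls the level $n$, absorbing the small perturbations arising in the tiling metric, and maintaining boundedness of the representative return vectors $z$ uniformly in $n$.
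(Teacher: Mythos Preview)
The paper does not supply a proof of this theorem: it is quoted from \cite{So} (as Theorem 3.13 there) and stated without argument, so there is nothing in the present paper to compare your proposal against.

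That said, your outline is essentially the strategy Solomyak follows in \cite{So}, and the decomposition into $(1)\Rightarrow(2)$, $(2)\Rightarrow(1)$, $(1)\Rightarrow(3)$, $(3)\Rightarrow(1)$ with the eigenfunction constructed on the dense orbit and extended by uniform continuity is the right shape. One point you should be careful about: you invoke Theorem~\ref{thm_recognizability} to de-substitute in the proof of $(3)\Rightarrow(1)$, but that theorem gives injectivity of $\omega$ on $X_\omega$ only when some (equivalently every) $\calT\in X_\omega$ is non-periodic. The statement of Theorem~\ref{Thm-Solomyak} does not assume non-periodicity, and condition (3b) is there precisely to handle the sub-periodic case. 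So either you should split off the sub-periodic case and treat it separately (where the analysis reduces to a torus factor and is elementary), or you should argue the uniform-continuity step without appealing to recognizability in the strong form; Solomyak's original argument controls $\chi_a(y-x)$ via the structure of return vectors and condition (3a) directly, using the finiteness coming from FLC, and does not need injectivity of $\omega$.
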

	
	
	\begin{defi}\label{def_epsilon_close}
		Let $F$ and $F'$ be finite subsets of $\Rd$ and $\e$ be a positive real number.
		We say $F$ and $F'$ are $\e$-close if
		\begin{itemize}
			\item for any $x\in F$ there exists a unique $y\in F'$ such that $\|x-y\|<\e$, and
			\item for any $y\in F'$ there exists a unique $x\in F$ such that $\|x-y\|<\e$.
		\end{itemize}
	\end{defi}
	
	\begin{defi}
		Let $S$ be a finite set of algebraic integers. We say $S$ forms a Pisot family if, for any $\lambda\in S$ and 
		its algebraic conjugate $\lambda'$ 
		such that $\lambda'\notin S$, we have $|\lambda'|<1$.
	\end{defi}
	For a linear map $\phi\colon\Rd\rightarrow\Rd$, its adjoint is denoted by $\phi^*$ and its spectrum is denoted by $\spec(\phi)$.

	\begin{thm}[\cite{MR2851885}, Theorem 2.8]\label{thm_lee_solomyak}
		Let $(\mathcal{A}, \phi, \omega)$ be a primitive substitution rule of FLC. 
		Assume $\phi$ is diagonalizable over $\mathbb{C}$ and all the eigenvalues are algebraic conjugates of the same multiplicity.
		Consider the following three conditions:
		\begin{enumerate}
			\item The set $\spec (\phi)$ is a Pisot family.
			\item The set of (topological and measurable) eigenvalues for $(X_{\omega},\Rd)$ is relatively dense.
			\item For any finite set $F\subset\mathbb{R}^d$ and $\e>0$ there is a finite $F'\subset\Rd\setminus\{0\}$ and $n\in\Zpo$ such that
				$F$ and $F'$ are $\e$-close and
			 the set of (topological and measurable) eigenvalues for $(X_{\omega},\Rd)$  includes
				\begin{align*}
					\sum_{x\in F'}\mathbb{Z}[(\phi^*)^{-n}]x.
				\end{align*}
		\end{enumerate}
		Then (1) and (2) are equivalent and (3) implies (1) and (2).
		If moreover $\omega\colon X_{\omega}\rightarrow X_{\omega}$ is injective, (1) and (2) imply (3).
		\end{thm}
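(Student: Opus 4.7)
My approach rests on Theorem \ref{Thm-Solomyak}: a vector $a\in\Rd$ is an eigenvalue of $(X_\omega,\Rd)$ if and only if $\langle\phi^n(z),a\rangle\to 0\pmod 1$ for every return vector $z$ of a fixed point $\calT$ (together with a trivial period condition, which disappears once $\omega$ is injective by Theorem \ref{thm_recognizability}). Let $\Xi=\Xi(\calT)$ denote the $\mathbb{Z}$-module generated by return vectors; primitivity and FLC force $\Xi$ to be a finitely generated $\phi$-invariant subgroup of $\Rd$. The hypothesis that $\phi$ is diagonalizable over $\mathbb{C}$ with all eigenvalues algebraically conjugate of equal multiplicity lets me realize $\phi|_\Xi$ via a companion-block matrix whose characteristic polynomial has roots $\spec(\phi)$ together with their Galois conjugates; the ``hidden'' conjugates outside $\spec(\phi)$ live in complementary subspaces obtained by extending scalars.

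The implication (3)$\Rightarrow$(2) is the easy direction: every $x\in F'$ is already contained in $\sum_{x\in F'}\mathbb{Z}[(\phi^*)^{-n}]x$, hence is an eigenvalue, and taking $F$ to be a fine net of an arbitrarily large ball with $\e$ shrinking makes the eigenvalues dense and in particular relatively dense. For (1)$\Rightarrow$(2) I construct eigenvalues directly using dual bases: pick a $\mathbb{Q}$-basis of $\Xi\otimes\mathbb{Q}$ and its dual, then for each dual vector $b$ the series $a=\sum_{k\geq 0}(\phi^*)^{-k}b$ is well defined because the Pisot-family condition makes the conjugate contributions absolutely summable, and the pairing $\langle\phi^n(z),a\rangle$ is an integer up to an exponentially small error coming from the contracted Galois directions, so it tends to $0\pmod 1$. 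Scalings and translations of such $a$ furnish a relatively dense eigenvalue set.

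For (2)$\Rightarrow$(1) I argue by contraposition. If $\spec(\phi)$ is not a Pisot family, some Galois conjugate $\lambda'$ of an element of $\spec(\phi)$ satisfies $|\lambda'|\geq 1$ and $\lambda'\notin\spec(\phi)$. Extending scalars to the algebraic closure and projecting onto the $\lambda'$-eigenspace produces, for suitable $z\in\Xi$, a sequence $\phi^n(z)$ whose component in a non-expanding Galois direction fails to decay; this obstructs $\langle\phi^n(z),a\rangle\to 0\pmod 1$ unless $a$ lies in a proper closed subspace orthogonal to that direction. Hence the set of eigenvalues cannot be relatively dense, contradicting (2).

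The main obstacle is (1)$\Rightarrow$(3) under injectivity of $\omega$. Recognizability (Theorem \ref{thm_recognizability}) supplies a ``desubstitution'' map and thereby a bijection between level-$n$ return vectors and return vectors via $\phi^n$, which I then lift to the dual lattice $\Xi^*$. Given $F$ and $\e>0$, the plan is to choose $n$ large enough that Pisot contraction on the conjugate directions is below $\e$ and to approximate each $x\in F$ by an $x'$ in a suitable $\mathbb{Z}[(\phi^*)^{-n}]$-saturated subset; the uniform-multiplicity hypothesis lets this be done simultaneously across all Galois components. The delicate step is to verify that \emph{every} element of $\sum_{x\in F'}\mathbb{Z}[(\phi^*)^{-n}]x$ is an eigenvalue: via Theorem \ref{Thm-Solomyak} this reduces to showing $\langle\phi^m(z),y\rangle\to 0\pmod 1$ for every generator $y$ and every return vector $z$, with the $\mathbb{Z}[(\phi^*)^{-n}]$-coefficients not destroying convergence. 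This is where the detailed algebraic-number-theoretic bookkeeping lives and is the true heart of the theorem; it is also the only place the injectivity of $\omega$ is used, via the fact that recognizability removes the period obstruction in Theorem \ref{Thm-Solomyak} and thereby reduces the eigenvalue criterion to the single convergence condition above.
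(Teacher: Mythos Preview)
Your proposal attempts much more than the paper does, and in doing so misses the elegant shortcut that makes the paper's own contribution almost trivial.

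First, the equivalence (1)$\Leftrightarrow$(2) is \emph{not} proved in the paper; it is simply cited from \cite{MR2851885}. Your sketches of (1)$\Rightarrow$(2) via the summed dual series $a=\sum_{k\geq 0}(\phi^*)^{-k}b$ and of (2)$\Rightarrow$(1) via projection onto a bad Galois direction are plausible outlines of what that reference does, but they are not part of this paper's argument and need not be reproduced here.

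Second, and more importantly, your treatment of (1)$+$(2)$+$injectivity $\Rightarrow$(3) is far heavier than necessary. You describe the ``detailed algebraic-number-theoretic bookkeeping'' as ``the true heart of the theorem''; in the paper there is none. The paper's argument is this: by (2), pick a basis $\{b_1,\dots,b_d\}$ of $\Rd$ consisting of eigenvalues. By injectivity and Theorem~\ref{thm_recognizability} the fixed point $\calT$ of $(\calA,\phi^n,\omega^n)$ is non-periodic, so Theorem~\ref{Thm-Solomyak} reduces the eigenvalue criterion to the single convergence condition \eqref{1st_condition_for_being_eigenvalue}. Now observe the one-line fact that this condition is preserved under $(\phi^*)^{-n}$: if $\langle\phi^{nm}z,a\rangle\to 0\pmod 1$, then $\langle\phi^{nm}z,(\phi^*)^{-n}a\rangle=\langle\phi^{n(m-1)}z,a\rangle\to 0\pmod 1$ as well. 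Hence every $(\phi^*)^{-kn}b_i$ is an eigenvalue, and since eigenvalues form a group, so is every integer combination. For $k$ large the lattice $\spa_{\mathbb{Z}}\{(\phi^*)^{-kn}b_i\}$ is fine enough to $\e$-approximate any finite $F$, and $F'$ chosen from this lattice automatically satisfies $\sum_{x\in F'}\mathbb{Z}[(\phi^*)^{-n}]x\subset\{\text{eigenvalues}\}$. No Galois bookkeeping, no desubstitution of return vectors, no dual lattice $\Xi^*$ is needed; the Pisot-family hypothesis (1) is used only through the already-cited equivalence with (2).

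Your (3)$\Rightarrow$(2) is fine in spirit, though the paper's version is cleaner: rather than a fine net, take $F$ to be a basis of $\Rd$ and $\e$ small enough that any $\e$-close $F'$ is still a basis; then $\sum_{x\in F'}\mathbb{Z}[(\phi^*)^{-n}]x$ contains a full-rank lattice and is relatively dense.
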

	\begin{proof}
		The equivalence of (1) and (2) is proved in \cite[Theorem 2.8]{MR2851885}.
		
		Assume (3). Take a basis $F$ of $\Rd$, then there is $\e>0$ such that if $F'\subset\Rd$ and $F$ and $F'$ are $\e$-close, then
		$F'$ is a basis. For such $F$ and $\e>0$, there are $F'$ and $n$ that satisfy the conditions in (3).
		Then $\sum_{x\in F'}\mathbb{Z}[(\phi^*)^{-n}]x$ consists of eigenvalues and is relatively dense.

		Assume (1), (2) and that $\omega\colon X_{\omega}\rightarrow X_{\omega}$ is injective. 
		There is $n>0$ such that the substitution rule $(\calA,\phi^n,\omega^n)$ admits a repetitive fixed point $\calT$.
		Note that $\calT$ is non-periodic.
		By (2) we can take a basis $\{b_1,b_2,\ldots, b_d\}$ from the set of eigenvalues.
		For any return vector $z$ for $\calT$ 
		and $x\in\{b_1,b_2,\ldots, b_d\}$, the equation \eqref{1st_condition_for_being_eigenvalue} in Theorem \ref{Thm-Solomyak}
		holds. The equation \eqref{1st_condition_for_being_eigenvalue} in Theorem \ref{Thm-Solomyak} also holds for a return vector $z$ and
		$x\in\{(\phi^*)^{-kn}b_i\mid k\in\Zpo, i=1,2,\ldots, d\}$.
		For any $F$ and $\e>0$ take $\e'<\e$ which is small enough so that $B(x,\e')\cap B(y,\e')=\emptyset$ for $x,y\in F$ with $x\neq y$.
		 We can take $k\in\Zpo$ such that
		$\sum_{i=1}^d\|(\phi^*)^{-kn}b_i\|<\e'$.
		For each $x\in F$ we may take $y_x\in\spa_{\mathbb{Z}}\{(\phi^*)^{-kn}b_i\mid i=1,2,\ldots, d\}\setminus\{0\}$ such that 
		$\|x-y_x\|<\e'$. Then $F'=\{y_x\mid x\in F\}$ and $F$ are $\e$-close.
		Moreover elements of the form $(\phi^*)^{-ln}y$, where $y\in F'$ and $l\geq 0$, are eigenvalues and so are their sums.
		\end{proof}

	\subsection{Pseudo-Self-Affine Tilings}\label{subsection_Pseudo-self-affine}
	In order to introduce pseudo-self-affine tilings in this subsection, we briefly recall local derivability between two tilings.
		
		Recall the definition of our notation $\calT\sqcap S$ (Definition \ref{def_cap_sqcap}).
		\begin{defi}[\cite{MR1132337}]
			Let $\calT_1$ and $\calT_2$ be tilings of $\mathbb{R}^d$. 
			We say $\calT_2$ is locally derivable from $\calT_1$ and write $\calT_1\LD\calT_2$ if there is $R>0$ such that
			\begin{align*}
				\text{$x,y\in\mathbb{R}^d$ and $(\calT_1-x)\sqcap B(0,R)=(\calT_1-y)\sqcap B(0,R)$}\\
				\Rightarrow (\calT_2-x)\sqcap\{0\}=(\calT_2-y)\sqcap\{0\}.
			\end{align*}
			If $\calT_1\LD\calT_2$ and $\calT_2\LD\calT_1$, we say $\calT_1$ and $\calT_2$ are mutually locally derivable (MLD) and write
			$\calT_1\MLD\calT_2$.
		\end{defi}
		
		\begin{lem}
			Let $\calT_1$ and $\calT_2$ be tilings of finite tile type in $\mathbb{R}^d$ and suppose $\calT_1\LD\calT_2$.
			Then there is a factor map $X_{\calT_1}\rightarrow X_{\calT_2}$ that sends $\calT_1$ to $\calT_2$.
		\end{lem}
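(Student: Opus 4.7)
The plan is to define the candidate factor map on the dense orbit $\{\calT_1-x\mid x\in\Rd\}\subset X_{\calT_1}$ by the obvious formula $\tilde f(\calT_1-x)=\calT_2-x$, establish uniform continuity of $\tilde f$ with respect to the tiling metric, extend it by continuity to a map $f\colon X_{\calT_1}\to X_{\calT_2}$, and finally check equivariance together with surjectivity.

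First I would verify that $\tilde f$ is well-defined on the orbit. If $\calT_1-x=\calT_1-y$ and $w=y-x$, then $(\calT_1-z)\sqcap B(0,R)=(\calT_1-z+w)\sqcap B(0,R)$ for every $z\in\Rd$, where $R$ is the radius supplied by the local derivability hypothesis. The LD relation then gives $(\calT_2-z)\sqcap\{0\}=(\calT_2-z+w)\sqcap\{0\}$ for every $z$, which forces $\calT_2+w=\calT_2$ and hence $\calT_2-x=\calT_2-y$.

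The main technical step is uniform continuity. Set $D=\max\{\diam T\mid T\in\calT_1\cup\calT_2\}<\infty$, which is finite by finite tile type. Suppose $\rho(\calT_1-x,\calT_1-y)<r$ with small $r$; pick witnesses $u,v\in B(0,r)$ so that $(\calT_1-x+u)\cap B(0,1/r)=(\calT_1-y+v)\cap B(0,1/r)$. For any $p\in B(0,1/r-R-D)$, any tile of $\calT_1-x+u$ whose closure meets $B(p,R)$ has diameter at most $D$ and is therefore contained in $B(0,1/r)$, so the agreement on $\cap B(0,1/r)$ upgrades to $(\calT_1-x+u-p)\sqcap B(0,R)=(\calT_1-y+v-p)\sqcap B(0,R)$. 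Applying LD gives $(\calT_2-x+u)\sqcap\{p\}=(\calT_2-y+v)\sqcap\{p\}$ for every such $p$, so $\calT_2-x+u$ and $\calT_2-y+v$ have identical tiles at every point of $B(0,1/r-R-D)$. Converting $\sqcap$ back to $\cap$ by the same bounded-diameter argument yields agreement on $\cap B(0,1/r-R-D)$, and since $u,v\in B(0,r)$, one concludes $\rho(\calT_2-x,\calT_2-y)\leq 1/(1/r-R-D)$, which tends to $0$ as $r\to 0$.

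Since $X_{\calT_2}$ is a closed subset of the complete metric space $(\Omega(\Rd),\rho)$ by Proposition \ref{prop_complete}, it is itself complete, so $\tilde f$ extends uniquely to a continuous $f\colon X_{\calT_1}\to X_{\calT_2}$. Equivariance of $\tilde f$ transfers to $f$ by density and continuity, and the image of $f$ contains the dense orbit $\{\calT_2-x\mid x\in\Rd\}$; under the compactness available in the intended applications (FLC) the image is closed and hence all of $X_{\calT_2}$. By construction $f(\calT_1)=\calT_2$. The real obstacle is the $\cap$-versus-$\sqcap$ bookkeeping in the uniform continuity step; everything else is either formal or a direct appeal to completeness and compactness.
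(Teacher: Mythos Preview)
The paper states this lemma without proof, so there is nothing to compare against; your argument is the standard one and is essentially correct. Defining the map on the orbit, proving uniform continuity via the LD radius $R$ together with the uniform bound $D$ on tile diameters, and extending by completeness is exactly how this is done.

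Two small remarks. First, in the uniform-continuity step your conversion ``$\sqcap$ back to $\cap$'' actually needs no further shrinking: once $(\calT_2-x+u)\sqcap\{p\}=(\calT_2-y+v)\sqcap\{p\}$ for every $p\in B(0,1/r-R-D)$, any tile $T\in(\calT_2-x+u)\cap B(0,1/r-R-D)$ contains some such $p$, hence $T\in\calT_2-y+v$, and symmetry gives equality of the $\cap B(0,1/r-R-D)$ patches directly. Second, your hedge on surjectivity is honest and appropriate: without FLC the domain $X_{\calT_1}$ need not be compact, so the image need not be closed. For the paper's purposes this does not matter, since the lemma is invoked only through the MLD remark that follows, and under MLD one gets continuous equivariant maps in both directions which agree with the identity on dense orbits and hence are mutually inverse; surjectivity is then automatic.
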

		
		\begin{rem}
			If $\calT_1\MLD\calT_2$, then the two topological dynamical systems $(X_{\calT_1},\mathbb{R}^d)$ and $(X_{\calT_2},\mathbb{R}^d)$
			are topologically conjugate.
			Note also that if $\calT$ is non-periodic and $\calT\MLD\calT'$, then $\calT'$ is also non-periodic.
		\end{rem}
		
		Next we introduce pseudo-substitution (substitution with amalgamation).
		
		\begin{defi}
			A triple $(\calA,\phi,\omega)$ where
			\begin{itemize}
				\item $\calA$ is a finite set of tiles in $\Rd$,
				\item $\phi\colon\Rd\rightarrow\Rd$ is an expansive linear map, and
				\item $\omega\colon\calA\rightarrow\{\text{$\calP$: finite patch}\mid \text{any $T\in\calP$ is a translate of an element of $\calA$}\}$
			\end{itemize}
			is called a pseudo-substitution rule (of $\Rd$).
			
			For a pseudo-substitution rule $(\calA,\phi,\omega)$, $P\in\calA$ and $x\in\Rd$, we set
			\begin{align*}
				\omega(P+x)=\omega(P)+\phi(x).
			\end{align*}
			
			For a patch $\calP$ consisting of translates of elements in $\calA$, we set
			\begin{align*}
				\omega(\calP)=\bigcup_{T\in\calP}\omega(T).
			\end{align*}
		\end{defi}
		
		\begin{defi}\label{def_several_conditions_for_pseudo_substi}
			Let $(\calA,\phi,\omega)$ be a pseudo-substitution rule.
			\begin{itemize}
				\item $(\calA,\phi,\omega)$ is said to be iteratable if for any $P\in\calA$ and $n\in\Zpo$, $\omega^n(P)$ is a patch.
				\item $(\calA,\phi,\omega)$ is said to be primitive if there exists $K\in\Zpo$ such that for any $P$ and $P'\in\calA$ there is $x\in\Rd$ with
					$P'+x\in\omega^K(P)$.
				\item $(\calA,\phi,\omega)$ is said to be expanding if there exist a sequence $(r_m)$ of positive numbers, 
				$P\in\calA, x\in\Rd$ and $n\in\Zpo$ such that
					\begin{itemize}
						\item $P+x\in\omega^n(P+x)$,
						\item $(P+x)+B(0,r_m)\subset\supp\omega^{nm}(P+x)$ for any $m\in\Zpo$, and
						\item $\lim_mr_m=\infty$.
					\end{itemize}
				\item $(\calA,\phi,\omega)$ is said to have FLC if
					\begin{align*}
						\{\omega^n(P)\cap B(x,R)\mid P\in\calA, n\in\Zpo, x\in\Rd\}/{\simt}
					\end{align*}
					is finite for any $R>0$.
			\end{itemize}
		\end{defi}
		
		\begin{rem}
			Substitution rules are pseudo-substitution rules that are iteratable and expanding.
			If a substitution rule is FLC (resp.\ primitive), then it is FLC  (resp.\ primitive) as a pseudo-substitution rule.
		\end{rem}
		
		\begin{rem}\label{rem_bounded_pseudo-substi}
			Since each $\omega(P)$ is finite, there is $L>0$ such that
			\begin{align*}
				\supp\omega(P)\subset\phi(\overline{P})+B(0,L)
			\end{align*}
			for any $P\in\calA$. For any $n\in\Zpo$, we have
			\begin{align*}
				\supp\omega^n(P)\subset(\phi(\overline{P})+B(0,\sum_{k=0}^{n-1}\|\phi\|^kL))
			\end{align*}
			where $\|\phi\|$ is the operator norm of $\phi$.
		\end{rem}

		\begin{prop}
			Let $(\calA,\phi,\omega)$ be an iteratable pseudo-substitution rule.
			Assume $(\calA,\phi,\omega)$ is expanding and so there exist $(r_m),P\in\calA, x\in\Rd$ and $n\in\Zpo$ that satisfy the condition 
			in Definition \ref{def_several_conditions_for_pseudo_substi}. Then
			\begin{align*}
				\calT=\bigcup_{m>0}\omega^{nm}(P+x)
			\end{align*}
			is a tiling such that $\omega^n(\calT)=\calT$.
			
			If $(\calA,\phi,\omega)$ is primitive then $\calT$ is repetitive, and if $(\calA,\phi,\omega)$ has FLC then $\calT$ has FLC.
		\end{prop}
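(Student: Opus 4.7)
The plan is to prove the four claims---that $\calT$ is a patch, that $\supp\calT=\Rd$, that $\omega^n(\calT)=\calT$, and the conditional repetitivity and FLC---using the directed-union structure of $\calT$. First I would verify that the hypothesis $P+x\in\omega^n(P+x)$ gives $\omega^{nm}(P+x)\subset\omega^{n(m+1)}(P+x)$ by induction on $m$, so the family $\{\omega^{nm}(P+x)\}_{m\geq 1}$ is nested increasing. By iteratability each term is a patch, and a directed union of patches is a patch (any two tiles coexist in some common member, hence have disjoint interiors), so $\calT$ is a patch. The expanding condition $(P+x)+B(0,r_m)\subset\supp\omega^{nm}(P+x)$ with $r_m\to\infty$ forces $\supp\calT=\Rd$. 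For $\omega^n$-invariance, the definition gives $\omega^n(\calT)=\bigcup_{m\geq 1}\omega^{n(m+1)}(P+x)=\bigcup_{m\geq 2}\omega^{nm}(P+x)$, which equals $\calT$ because the $m=1$ term is absorbed into the $m=2$ term by nesting.

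For repetitivity under primitivity, take a finite $\calP\subset\calT$. Since $\calP$ is finite and the union is directed, $\calP\subset\omega^{nm_0}(P+x)$ for some $m_0$, so a translate of $\calP$ is a subpatch of $\omega^{nm_0}(P)$. By primitivity with constant $K$, every $\omega^K(Q)$ for $Q\in\calA$ contains a translate of $P$; iterating (using that $\omega^K(P)$ itself contains a translate of $P$), $\omega^{nK}(Q)$ also contains a translate of $P$ for every $Q\in\calA$. Consequently $\omega^{n(K+m_0)}(Q)=\omega^{nm_0}(\omega^{nK}(Q))$ contains a translate of $\omega^{nm_0}(P)$, hence of $\calP$, and the exponent $n(K+m_0)$ is a multiple of $n$. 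Let $D$ be the uniform bound on $\diam\supp\omega^{n(K+m_0)}(Q)$ coming from Remark \ref{rem_bounded_pseudo-substi}. For any $y\in\Rd$, the point $y$ lies in $\overline{T'}$ for some $T'\in\calT$; iterating $\omega^n(\calT)=\calT$ gives $T'\in\omega^{n(K+m_0)}(T)$ for some $T\in\calT$, and this patch has diameter $\leq D$, lies in $\calT$, and contains a translate of $\calP$. Hence a translate of $\calP$ in $\calT$ is within distance $D$ of $y$, giving relative density. The main obstacle is ensuring the exponent is a multiple of $n$ and that the extracted translate of $\calP$ lies near the given $y$ rather than near the distant set $\phi^{n(K+m_0)}(T)$; both issues are handled by first locating $y$ inside a tile of $\calT$ and only then realizing that tile as an image under $\omega^{n(K+m_0)}$.

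For FLC under the FLC hypothesis, fix $R>0$ and let $r=\max_{Q\in\calA}\diam Q$. Each $\calT\sqcap B(y,R)$ is finite by the argument of Lemma \ref{lem_FTT_sqcap_finite} applied to $\calT$ (proto-tiles contain balls of uniform positive radius), and for $m$ large enough depending on $y$ it lies in $\omega^{nm}(P+x)=\omega^{nm}(P)+\phi^{nm}(x)$. Translating by $-\phi^{nm}(x)$ embeds $\calT\sqcap B(y,R)-\phi^{nm}(x)$ as a subpatch of $\omega^{nm}(P)\sqcap B(z,R+r)$ for a suitable $z$. The family $\{\omega^{nm}(P)\sqcap B(z,R+r)\mid m,P,z\}$ is contained in $\{\omega^{nm}(P)\cap B(z,R+2r)\mid m,P,z\}$, which has finitely many $\simt$-classes by the FLC hypothesis on the pseudo-substitution. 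Two applications of Lemma \ref{lem_family_patch_finite} then give finiteness of $\{\calT\sqcap B(y,R)\mid y\in\Rd\}/{\simt}$, and Lemma \ref{lem_FLC_iikae1} concludes.
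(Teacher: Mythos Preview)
Your proposal is correct and follows essentially the same approach as the paper's proof, supplying details that the paper leaves implicit (the paper dismisses the tiling and FLC claims as ``straightforward'' and ``easy by Lemma~\ref{lem_family_patch_finite}'', and gives only the repetitivity argument in full). The one organizational difference worth noting is in the repetitivity step: the paper simply chooses $m$ with both $\calP\subset\omega^{nm}(P+x)$ and $nm\geq K$, so that primitivity applies directly at level $nm$; you instead fix $m_0$ first and then iterate the primitivity constant from $K$ up to $nK$ to force a multiple of $n$. Both routes are valid, and the paper locates the point $y$ by pulling back through $\phi^{-2nm}$ rather than by first picking $T'\ni y$ and then unwinding $\omega^{n(K+m_0)}$, but these are dual descriptions of the same mechanism.
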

		\begin{proof}
			It is straightforward to show $\calT$ is a tiling with $\omega^n(\calT)=\calT$.
			
			Suppose the substitution rule is primitive. 
			There is $K\in\Zpo$ as in the condition in Definition \ref{def_several_conditions_for_pseudo_substi}.
			Also there is $L>0$ such that $\supp\omega(P)\subset\phi(\overline{P})+B(0,L)$ for each $P\in\calA$ (see Remark \ref{rem_bounded_pseudo-substi}).
			To prove that $\calT$ is repetitive take a finite patch $\calP\subset\calT$.
			There is $m>0$ such that $\calP\subset\omega^{nm}(P+x)$ and $nm\geq K$. 
			Take $R>\max_{P'\in\calA}\diam(\phi^{2nm}(P')+B(0,\sum_{k=0}^{2nm-1}\|\phi\|^kL))$.
			If $y\in\Rd$, there is $T\in\calT$ such that $\phi^{-2nm}y\in\overline{T}$.
			By primitivity there is $z\in\Rd$ such that $P+z\in\omega^{nm}(T)$.
			Then $\calP+\phi^{nm}(z-x)\subset\calT\cap B(y,R)$.
			This shows that
			$\calT$ is repetitive.
			
			The statement with regard to FLC is easy to prove by using Lemma \ref{lem_family_patch_finite}.
		\end{proof}
		
	For a patch $\calP$ of $\mathbb{R}^d$ and an expansive linear map $\phi\colon\mathbb{R}^d\rightarrow\mathbb{R}^d$
	set $\phi(\calP)=\{\phi(T)\mid T\in\calP\}$.
	
	\begin{prop}
		Let $(\calA,\phi,\omega)$ be a pseudo-substitution rule, $\calT$ a tiling consisting of translates of elements in $\calA$ 
		and $n\in\Zpo$. Suppose $\omega^n(\calT)=\calT$.
		Then $\phi^n(\calT)\LD\calT$.
	\end{prop}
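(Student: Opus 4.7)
The plan is to exhibit an $R > 0$ such that, whenever $x,y\in\Rd$ agree on $\phi^n(\calT)$ in the ball of radius $R$, the substitution structure forces them to agree on $\calT$ at the origin. The key mechanism is that $\omega^n(\calT)=\calT$ lets every tile of $\calT$ be ``traced back'' to a unique parent tile whose $\phi^n$-image is nearby, and translation equivariance of $\omega^n$ propagates equalities from parents to children.

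First, invoke Remark \ref{rem_bounded_pseudo-substi} to obtain $C>0$ with $\supp\omega^n(P)\subset\phi^n(\overline{P})+B(0,C)$ for every $P\in\calA$; by translation equivariance of $\omega^n$ (i.e., $\omega^n(P+v)=\omega^n(P)+\phi^n(v)$), the same inclusion holds for every translate $T_0=P+v\in\calT$, namely $\supp\omega^n(T_0)\subset\phi^n(\overline{T_0})+B(0,C)$. Fix $R=C+1$.

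Next, suppose $x,y\in\Rd$ satisfy $(\phi^n(\calT)-x)\sqcap B(0,R)=(\phi^n(\calT)-y)\sqcap B(0,R)$, and take any $T\in\calT$ with $x\in\overline{T}$. Since $\omega^n(\calT)=\calT$ writes $\calT$ as a disjoint union of the patches $\omega^n(T_0)$, $T_0\in\calT$, there is a unique $T_0\in\calT$ with $T\in\omega^n(T_0)$. Then $x\in\overline{T}\subset\phi^n(\overline{T_0})+\overline{B(0,C)}$ shows $\overline{\phi^n(T_0)}\cap B(x,R)\neq\emptyset$, so $\phi^n(T_0)-x$ belongs to $(\phi^n(\calT)-x)\sqcap B(0,R)$. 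By hypothesis, there exists $T_0'\in\calT$ with $\phi^n(T_0)-x=\phi^n(T_0')-y$. Writing $T_0=P+v$ with $P\in\calA$ (using that this expression is unique on $\calT$), the tile equation forces $T_0'=P+v+\phi^{-n}(y-x)$, hence
\begin{align*}
\omega^n(T_0')=\omega^n(P)+\phi^n(v)+(y-x)=\omega^n(T_0)+(y-x),
\end{align*}
so $\omega^n(T_0)-x=\omega^n(T_0')-y$ as patches. Consequently $T-x$ sits inside $\omega^n(T_0')-y$, i.e., $T-x=T'-y$ for some $T'\in\omega^n(T_0')\subset\calT$. Then $0\in\overline{T-x}=\overline{T'-y}$, so $T'-y\in(\calT-y)\sqcap\{0\}$. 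This gives the inclusion $(\calT-x)\sqcap\{0\}\subseteq(\calT-y)\sqcap\{0\}$, and the reverse inclusion is obtained by swapping the roles of $x$ and $y$.

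The only genuinely delicate point is Step in which $\phi^n(T_0)-x=\phi^n(T_0')-y$ is promoted to $\omega^n(T_0)-x=\omega^n(T_0')-y$: this relies on the well-definedness of the translation-equivariance formula $\omega^n(P+v)=\omega^n(P)+\phi^n(v)$, which in turn requires that the writing of a tile as a translate of a proto-tile be unique (the standard convention in the paper). Injectivity of $\phi^n$, automatic from expansiveness, is what allows us to recover the translation vector from $\phi^n$ applied to it; everything else is bookkeeping with the bounded support inclusion.
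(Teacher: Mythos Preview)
Your proof is correct and follows essentially the same strategy as the paper's: bound $\supp\omega^n(T_0)$ inside $\phi^n(\overline{T_0})+B(0,C)$ via Remark~\ref{rem_bounded_pseudo-substi}, use the assumed agreement of $\phi^n(\calT)$ on $B(0,R)$ to match parent tiles, and then push this through $\omega^n$ by translation equivariance. The paper carries this out patch-at-once (working with $\omega^n(\calT\cap\phi^{-n}B(x,R))$), while you do it tile-by-tile; the underlying mechanism is identical.

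One small overclaim: you assert that $\calT=\bigcup_{T_0\in\calT}\omega^n(T_0)$ is a \emph{disjoint} union, giving a \emph{unique} parent $T_0$. For a general pseudo-substitution nothing prevents two distinct $\omega^n(T_0)$'s from sharing a tile, so uniqueness is not guaranteed. Fortunately your argument only uses existence of some $T_0$ with $T\in\omega^n(T_0)$, so simply drop the word ``unique'' and the disjointness claim.
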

	\begin{proof}
		There is $L>0$ such that $\supp\omega(P)\subset\phi(\overline{P})+B(0,L)$ for each $P\in\calA$ (see Remark \ref{rem_bounded_pseudo-substi}).
		Set $R=\max_{P\in\calA}\diam(\phi^n(\overline{P})+B(0,\sum_{k=0}^{n-1}\|\phi\|^kL))$. If $x\in\Rd$, $T\in\calT$ and $x\in\overline{T}$,then 
		$T\in\omega^n(\calT\cap\phi^{-n}B(x,R))$. 
		 To prove local derivability suppose $x,y\in\Rd$ and $(\phi^n(\calT)-x)\sqcap B(0,R)=(\phi^n(\calT)-y)\sqcap B(0,R)$. Then
		$\calT\cap\phi^{-n}(B(x,R))-\phi^{-n}(x)=\calT\cap\phi^{-n}(B(y,R))-\phi^{-n}(y)$. This implies that
		\begin{align*}
			\calT\sqcap\{x\}&=\omega^n(\calT\cap\phi^{-n}(B(x,R)))\sqcap\{x\}\\
						&=(\omega^n(\calT\cap\phi^{-n}(B(y,R)))\sqcap\{y\})+x-y\\
						&=\calT\sqcap\{y\}+x-y.
		\end{align*}
	\end{proof}

	\begin{defi}[\cite{MR2183221}]
		A repetitive FLC tiling $\calT$ is called a pseudo-self-affine tiling if $\phi(\calT)\LD\calT$
		for some expansive linear map $\phi\colon\mathbb{R}^d\rightarrow\mathbb{R}^d$.
	\end{defi}

	\begin{rem}
		Self-affine tilings are pseudo-self-affine tilings.
	\end{rem}
	
	\begin{thm}[\cite{MR2183221}]\label{thm_pseudo_affine}
		Let $\calT$ be a pseudo-self-affine tilings with an expansion map $\phi$.
		Then for any  $k\in\Zpo$ sufficiently large, there exists a tiling $\calT'$ which is self-affine with expansion $\phi^k$
		such that $\calT$ is MLD with $\calT'$.
	\end{thm}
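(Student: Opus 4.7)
The plan is to realize $\calT$ (up to MLD equivalence) as a fixed point of a genuine substitution with expansion $\phi^k$, by passing to a sufficiently collared version and choosing $k$ large enough that the pseudo-substitution ambiguity can be absorbed into the collaring.

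First, I would extract a local derivability radius. Since $\phi(\calT)\LD\calT$, fix $R>0$ such that $(\phi(\calT)-x)\sqcap B(0,R)=(\phi(\calT)-y)\sqcap B(0,R)$ implies $(\calT-x)\sqcap\{0\}=(\calT-y)\sqcap\{0\}$. Iterating, for each $k\geq 1$ we get $\phi^k(\calT)\LD\calT$ with some radius $R_k$ that grows only polynomially in $k$, whereas preimages of bounded sets under $\phi^k$ shrink exponentially. Next, construct a collared tiling $\widetilde{\calT}$ of finite tile type that is MLD with $\calT$: put a puncture $c_P$ in each proto-tile $P$, fix a (large) collar radius $\rho$, and declare two tiles $P+x_1, P+x_2\in\calT$ equivalent iff $(\calT-x_1)\sqcap B(c_P,\rho)=(\calT-x_2)\sqcap B(c_P,\rho)$. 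FLC gives finitely many classes; relabelling tiles by their classes yields $\widetilde{\calT}$, which is manifestly MLD with $\calT$.

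The heart of the proof is to choose $k$ and $\rho$ so that the map ``expand each collared tile by $\phi^k$ and list the collared tiles it swallows'' is a well-defined substitution on prototiles. Given a collared prototile $\widetilde{P}$, pick any representative $\widetilde{P}+x\subset\widetilde{\calT}$ and set
\begin{equation*}
\omega(\widetilde{P})+\phi^k(x)\;=\;\{\widetilde{T}\in\widetilde{\calT}:\supp\widetilde{T}\subset \phi^k(\supp(\widetilde{P}+x))\}.
\end{equation*}
Well-definedness (independence of the chosen representative $x$) is where the collar earns its keep: the class of $\widetilde{P}+x$ determines $\calT$ in a $\rho$-neighborhood, hence $\phi^k(\calT)$ in the $\phi^k$-image of that neighborhood; by $\phi^k(\calT)\LD\calT$ with radius $R_k$, this in turn determines $\calT$, and therefore $\widetilde{\calT}$, throughout $\phi^k(\supp(\widetilde{P}+x))$ together with a margin of width $\rho$. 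For $k$ large enough that $\|\phi^k\|\rho$ dominates $R_k$ plus the diameter of any single tile, this margin includes everything we need to read off the collared types of the tiles filling $\phi^k(\supp\widetilde{P}+x)$, so $\omega(\widetilde{P})$ depends only on $\widetilde{P}$. Primitivity and the FLC condition for the substitution follow from repetitivity and FLC of $\calT$.

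The main obstacle is the support condition $\supp\omega(\widetilde{P})=\phi^k(\overline{\widetilde{P}})$, required for a genuine substitution rather than a pseudo-one: individual tiles of $\widetilde{\calT}$ inside $\phi^k(\supp(\widetilde{P}+x))$ may protrude across, or fall short of, the scaled boundary. The remedy is to redraw the prototile boundaries. Define $\calT'$ to have the same set of tile-centers as $\widetilde{\calT}$ but replace each tile's support with the union of the Voronoi-style chunks dictated by its collared type; equivalently, modify each prototile's shape so that the union of $\omega(\widetilde{P})$-supports equals $\phi^k(\overline{\widetilde{P}})$ exactly. Because these modifications are dictated by the collared type, they are themselves locally derivable from $\widetilde{\calT}$ and hence from $\calT$, so $\calT'\MLD\calT$. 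Verifying that, after this boundary adjustment, $\omega$ is still well-defined (the boundary recipe is itself class-determined) and $\calT'$ is a fixed point of $(\calA',\phi^k,\omega)$ is the delicate bookkeeping step, and requires one more increase of $k$ and $\rho$ to absorb the boundary-correction neighborhood into the collar.
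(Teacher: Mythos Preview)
The paper does not prove this theorem at all; it merely quotes it from \cite{MR2183221}, so there is no ``paper's own proof'' to compare against. Your sketch is therefore being measured against Solomyak's original argument, and it diverges from that argument in a way that leaves a genuine gap.

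The concrete error is the growth rate of $R_k$. Iterating $\phi(\calT)\LD\calT$ with radius $R$ gives $\phi^k(\calT)\LD\calT$ with radius $R_k=R\sum_{j=0}^{k-1}\|\phi^{j}\|$, which grows like $|\lambda_{\max}|^{k}$, \emph{exponentially} in $k$ and not polynomially as you claim. Worse, the quantity you then compare it to is wrong: to know that the collar information propagates across the inflated tile you need the \emph{smallest} semi-axis of the ellipsoid $\phi^k(B(0,\rho))$, i.e.\ something like $\|\phi^{-k}\|^{-1}\rho\sim|\lambda_{\min}|^{k}\rho$, not $\|\phi^{k}\|\rho\sim|\lambda_{\max}|^{k}\rho$. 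The inequality you actually need is $|\lambda_{\min}|^{k}\rho\gtrsim |\lambda_{\max}|^{k}R$, i.e.\ $\rho\gtrsim(|\lambda_{\max}|/|\lambda_{\min}|)^{k}R$. When $\phi$ is a genuine (non-conformal) affinity this forces $\rho$ to grow with $k$, so you cannot ``choose $k$ large'' with a fixed collar; your mechanism for absorbing the pseudo-substitution ambiguity breaks down exactly in the affine case the theorem is about. (In the self-\emph{similar} case $|\lambda_{\min}|=|\lambda_{\max}|$ and the collaring idea can be made to work, which is essentially Priebe~Frank's earlier result.)

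The second gap is the boundary-redrawing step, which you correctly flag as the crux but do not actually carry out. ``Voronoi-style chunks dictated by its collared type'' is a description of what one hopes exists, not a construction; producing tiles whose $\phi^k$-inflations are \emph{exactly} tiled by copies of themselves, while remaining MLD with $\calT$, is the substantive content of the theorem. Solomyak's proof in \cite{MR2183221} does not attempt your direct collaring route; it builds a sequence of \emph{derived Voronoi tilings} and shows this sequence is eventually periodic up to MLD, the period furnishing the genuine substitution with expansion $\phi^k$. That approach sidesteps the $|\lambda_{\max}|/|\lambda_{\min}|$ obstruction entirely.
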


	\subsection{Main Results for Pseudo-Self-Similar Tilings}\label{subsection_main_result_for_pseudo-self-affine}
	Since self-affine tilings are pseudo-self-affine tilings, we can replace the word ``pseudo-self-affine'' with ``self-affine'' in the results in
	this subsection.
	\begin{thm}\label{thm1_substitution}
		Let $\calT$ be a non-periodic pseudo-self-affine tiling for an expansion map $\phi$.
		Assume $\phi$ is diagonalizable over $\mathbb{C}$, all the eigenvalues are algebraic conjugates of the same multiplicity, and $\spec(\phi)$ forms 
		a Pisot family.
		Then for any $R_0>0$, $\e>0$ and a finite subset $F$ of $B(0,\frac{1}{8R_0})$, there exist $R>0$, a finite subset $F'\subset\Rd\setminus\{0\}$,
		and $x_a:\bigcup_{L>R}\Pi_{\calT,L,\calA}\rightarrow\mathbb{R}^d$  for each $a\in F'$
		such that the following two conditions hold:
		\begin{enumerate}
			\item $F$ and $F'$ are $\e$-close.
			\item the patch	
				\begin{align*}
					(\calP_1+x_a(\calP_1))\cup (\calP_2+x_a(\calP_2)+\frac{1}{2\| a\|^2}a+y+z)
				\end{align*}
				is \emph{not} $\calT$-legal for any $a\in F'$, 
				$\mathcal{P}_1,\mathcal{P}_2\in\bigcup_{L>R}\Pi_{\mathcal{T},L,\calA}$, $y\in\Ker\chi_{a}$ and $z\in B(0,R_0)$.
		\end{enumerate}
	\end{thm}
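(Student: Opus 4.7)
The plan is to pass from the pseudo-self-affine tiling $\calT$ to an MLD self-affine tiling $\calT'$, use the eigenvalue-density statement of Theorem \ref{thm_lee_solomyak} to extract a set $F'$ of small topological eigenvalues clustered near $F$, and then invoke Proposition \ref{prop_main_thm} once for each $a \in F'$. First I would apply Theorem \ref{thm_pseudo_affine} to obtain a self-affine tiling $\calT' \MLD \calT$ with expansion $\phi^k$ for some $k \in \Zpo$. Mutual local derivability yields a topological conjugacy $X_{\calT} \cong X_{\calT'}$, so the two systems share the same set of topological eigenvalues, and $\calT'$ inherits non-periodicity from $\calT$. A routine check shows that $\spec(\phi^k) = \{\lambda^k : \lambda \in \spec(\phi)\}$ remains a Pisot family whose elements are algebraic conjugates of common multiplicity, since every algebraic conjugate of $\lambda^k$ is of the form $\sigma(\lambda)^k$ for some Galois automorphism $\sigma$, and so lies in $\spec(\phi^k)$ whenever $\sigma(\lambda) \in \spec(\phi)$. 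By Theorem \ref{thm_recognizability}, the substitution map $\omega$ associated with $\calT'$ is injective on its tiling space, which, combined with the Pisot-family condition, lets me appeal to condition (3) of Theorem \ref{thm_lee_solomyak}.

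To extract $F'$, fix $R_0, \e, F$ as in the statement (we may assume $F \neq \emptyset$, the conclusion being vacuous otherwise). Since $F$ is a finite subset of the open ball $B(0, \tfrac{1}{8R_0})$, the quantity $\delta := \tfrac{1}{8R_0} - \max_{f \in F}\|f\|$ is strictly positive. Set $\e' = \min(\e, \delta)$ and apply condition (3) of Theorem \ref{thm_lee_solomyak} with input $F$ and $\e'$; this produces a finite $F' \subset \Rd \setminus \{0\}$ and $n \in \Zpo$ such that $F$ and $F'$ are $\e'$-close (hence $\e$-close, as required) and every element of $\sum_{x \in F'} \mathbb{Z}[((\phi^k)^*)^{-n}]\,x$ is a topological eigenvalue of $(X_{\calT'},\Rd)$, equivalently of $(X_{\calT},\Rd)$ by conjugacy. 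In particular each $a \in F'$ is itself a topological eigenvalue, and the $\e'$-closeness forces $\|a\| < \max_{f \in F}\|f\| + \delta = \tfrac{1}{8R_0}$.

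Finally, for each $a \in F'$ and each $x \in B(0, R_0)$, the estimate $|\langle a, x\rangle| \leq \|a\| \cdot \|x\| < \tfrac{1}{8}$ gives $\rhoT(\chi_a(x),1) < \tfrac{1}{8}$, which is precisely the hypothesis of Proposition \ref{prop_main_thm} for $\calT$, $a$, and $U = B(0,R_0)$. That proposition then yields, for each $a \in F'$, some $R_a > 0$ and a map $x_a : \bigcup_{L > R_a}\Pi_{\calT,L,\calA} \to \Rd$ with the required non-$\calT$-legality property for $\calP_1, \calP_2$, $y \in \Ker \chi_a$, $z \in B(0,R_0)$. Setting $R = \max_{a \in F'} R_a$ (finite, since $F'$ is finite) and restricting each $x_a$ to $\bigcup_{L > R}\Pi_{\calT,L,\calA}$ completes the construction. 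The main subtlety, and the point I expect to need the most care, is coordinating the three constraints on the accuracy parameter simultaneously: the user-supplied $\e$, the $\e'$ needed to keep $F' \subset B(0,\tfrac{1}{8R_0})$ so that Proposition \ref{prop_main_thm} applies, and the $\e$-closeness of $F$ and $F'$ demanded by condition (1) of the conclusion. The uniform choice $\e' = \min(\e,\delta)$ is what reconciles all three.
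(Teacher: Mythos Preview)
Your proposal is correct and follows essentially the same route as the paper: pass to an MLD self-affine tiling via Theorem \ref{thm_pseudo_affine}, use Theorem \ref{thm_recognizability} and condition (3) of Theorem \ref{thm_lee_solomyak} to produce eigenvalues $F'$ that are $\e$-close to $F$ and lie in $B(0,\tfrac{1}{8R_0})$, then apply Proposition \ref{prop_main_thm} to each $a\in F'$ and take $R=\max_{a\in F'}R_a$. Your treatment is in fact slightly more explicit than the paper's, which omits the verification that $\spec(\phi^k)$ still satisfies the algebraic hypotheses of Theorem \ref{thm_lee_solomyak}.
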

	\begin{proof}
	By Theorem \ref{thm_pseudo_affine}, there are $k>0$ and a self-affine tiling $\calT'$ with an expansion map $\phi^k$ such that $\calT\MLD\calT'$.
	Note that $\calT'$ is non-periodic and by Theorem \ref{thm_recognizability}, Theorem \ref{thm_lee_solomyak} is applicable.
	Take $\eta\in (\max_{a\in F}\|a\|,\frac{1}{8R_0})$.
	Since $(X_{\calT},\Rd)$ and $(X_{\calT'},\Rd)$ are topologically conjugate,
	by Theorem \ref{thm_lee_solomyak} we can take a finite $F'\subset\Rd\setminus\{0\}$ such that the following two conditions hold:
	\begin{itemize}
		\item $F$ and $F'$ are $\min\{\e, \frac{1}{8R_0}-\eta\}$-close.
		\item any $a\in F'$ is a topological eigenvalue.
	\end{itemize}
	By Proposition \ref{prop_main_thm}, for any $a\in F'$ there exist $R_a>0$ and a map $x_a'\colon\bigcup_{L>R}\Pi_{\calT,L,\calA}\rightarrow\Rd$
	such that the patch
	\begin{align*}
		(\calP_1+x_a'(\calP_1))\cup (\calP_2+x_a'(\calP_2)+\frac{1}{2\|a\|^2}a+y+z)
	\end{align*}	
	is \emph{not} $\calT$-legal for any $\calP_1,\calP_2\in\bigcup_{L>R_a}\Pi_{\calT,L,\calA}$, $y\in\Ker\chi_a$ and $z\in B(0,R_0)$.
	Set $R=\max_{a\in F'}R_a$ and $x_a=x_a'|_{\bigcup_{L>R}\Pi_{\calT,L,\calA}}$ (the restriction). Then these $R$ and $x_a$ have the desired property.
	\end{proof}

	\begin{cor}\label{cor_substi_main_thm1}
		Let $\calT$ be a non-periodic pseudo-self-affine tiling with an expansion map $\phi$.
		Assume that $\phi$ is diagonalizable over $\mathbb{C}$ and all the eigenvalues are algebraic conjugates of the same multiplicity.
		Assume also that $\spec(\phi)$ forms a Pisot family.
		Then for any $R_0>0$ and $\e>0$, there exist a basis $\mathcal{B}$ of $\Rd$, $R>0$ and $x_a\colon\bigcup_{L>R}\Pi_{\calT,L,\calA}\rightarrow \Rd$
		 for each $a\in\mathcal{B}$ such that the following two conditions hold:
		 \begin{enumerate}
		 	\item The patch
				\begin{align*}
					(\calP_1+x_a(\calP_1))\cup (\calP_2+x_a(\calP_2)+\frac{1}{2\| a\|^2}a+y+z)
				\end{align*}
				is \emph{not} $\calT$-legal for any $a\in \mathcal{B}$, 
				$\mathcal{P}_1,\mathcal{P}_2\in\bigcup_{L>R}\Pi_{\mathcal{T},L,\calA}$, $y\in\Ker\chi_{a}$ and $z\in B(0,R_0)$.
			\item $8R_0<\frac{1}{\|a\|}<(8+\e)R_0$ for each $a\in\mathcal{B}$.
		 \end{enumerate} 
	\end{cor}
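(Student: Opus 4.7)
The plan is to deduce Corollary \ref{cor_substi_main_thm1} directly from Theorem \ref{thm1_substitution} by a careful choice of the input set $F$, together with a continuity argument that shows the resulting set $F'$ can be forced to be a basis whose elements lie in the desired annulus.

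First, I would fix any orthonormal basis $\{v_1,\ldots,v_d\}$ of $\Rd$ and, given $R_0 > 0$ and $\e>0$, set $c = \frac{1}{(8+\e/2)R_0}$ and $F = \{cv_1,\ldots,cv_d\}$. Each $a\in F$ then has $\|a\| = c$, so $\frac{1}{(8+\e)R_0} < \|a\| < \frac{1}{8R_0}$, and in particular $F\subset B(0,\frac{1}{8R_0})$, meeting the hypothesis of Theorem \ref{thm1_substitution}. The idea is that $F$ already lies strictly inside the target annulus and is already a basis; we only need to guarantee the perturbed set $F'$ furnished by Theorem \ref{thm1_substitution} stays inside both open conditions.

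Next I would choose the tolerance $\e' > 0$ carefully. Set
\begin{align*}
\e' = \min\Bigl\{\e,\; c - \tfrac{1}{(8+\e)R_0},\; \tfrac{1}{8R_0} - c,\; \delta\Bigr\},
\end{align*}
where $\delta > 0$ is a constant chosen so that whenever $w_1,\ldots,w_d\in\Rd$ satisfy $\|w_i - cv_i\| < \delta$ for every $i$, the tuple $(w_1,\ldots,w_d)$ remains a basis of $\Rd$. Such a $\delta$ exists by continuity of the determinant on $(\Rd)^d$ applied at the nondegenerate point $(cv_1,\ldots,cv_d)$. Applying Theorem \ref{thm1_substitution} with parameters $(R_0,\e',F)$, I obtain $R>0$, a finite set $F'\subset\Rd\setminus\{0\}$ which is $\e'$-close to $F$, and maps $x_a\colon\bigcup_{L>R}\Pi_{\calT,L,\calA}\to\Rd$ for each $a\in F'$.

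Finally I would set $\mathcal{B} := F'$ and verify the two conclusions. By $\e'$-closeness and Definition \ref{def_epsilon_close}, the set $F'$ has cardinality $d$ and each $a\in F'$ is within $\e'\leq\delta$ of a corresponding element of $F$, so $\mathcal{B}$ is a basis of $\Rd$. For the norm condition, each $a\in\mathcal{B}$ satisfies $\bigl|\|a\|-c\bigr|<\e'$, and the remaining two constraints in the minimum defining $\e'$ force $\frac{1}{(8+\e)R_0} < \|a\| < \frac{1}{8R_0}$, which is equivalent to $8R_0 < \frac{1}{\|a\|} < (8+\e)R_0$. Condition (1) of the corollary is then precisely the non-legality clause supplied by Theorem \ref{thm1_substitution}. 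The only substantive point in the argument is verifying the openness of the basis condition, which is immediate from continuity of $\det$; the rest is parameter bookkeeping, so I anticipate no real obstacle beyond picking the constants in the right order.
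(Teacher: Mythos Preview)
Your proposal is correct and follows essentially the same approach as the paper: choose an initial basis inside the open annulus $\{\,a : \frac{1}{(8+\e)R_0} < \|a\| < \frac{1}{8R_0}\,\}$, pick $\e'$ small enough that any $\e'$-perturbation stays in the annulus and remains a basis (via continuity of the determinant), and then apply Theorem \ref{thm1_substitution}. Your version is merely more explicit about the constants; the logical structure is identical to the paper's.
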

	
	\begin{proof}
		Take a basis $\mathcal{B}_0$ of $\Rd$ such that if $a\in\mathcal{B}_0$, then $\|a\|\in(\frac{1}{R_0(8+\e)}, \frac{1}{8R_0})$.
		We can take $\e'>0$ such that
		\begin{itemize}
			\item if $b\in\Rd$, $a\in\mathcal{B}_0$ and $\|a-b\|<\e'$, then we have $\|b\|\in(\frac{1}{R_0(8+\e)}, \frac{1}{8R_0})$, and
			\item if we take $b_a\in\Rd$ such that $\|a-b_a\|<\e'$ for each $a\in\mathcal{B}_0$, then $\{b_a\mid a\in\mathcal{B}_0\}$ is a basis.
		\end{itemize}
		Apply Theorem \ref{thm1_substitution} to $R_0, \e'$ and $\mathcal{B}_0$. We obtain $\mathcal{B}$, $R>0$ and maps $x_a$.
		These satisfy the desired conditions.
	\end{proof}

			\begin{figure}[h]
		\begin{center}
		\includegraphics[width=1.0\columnwidth]{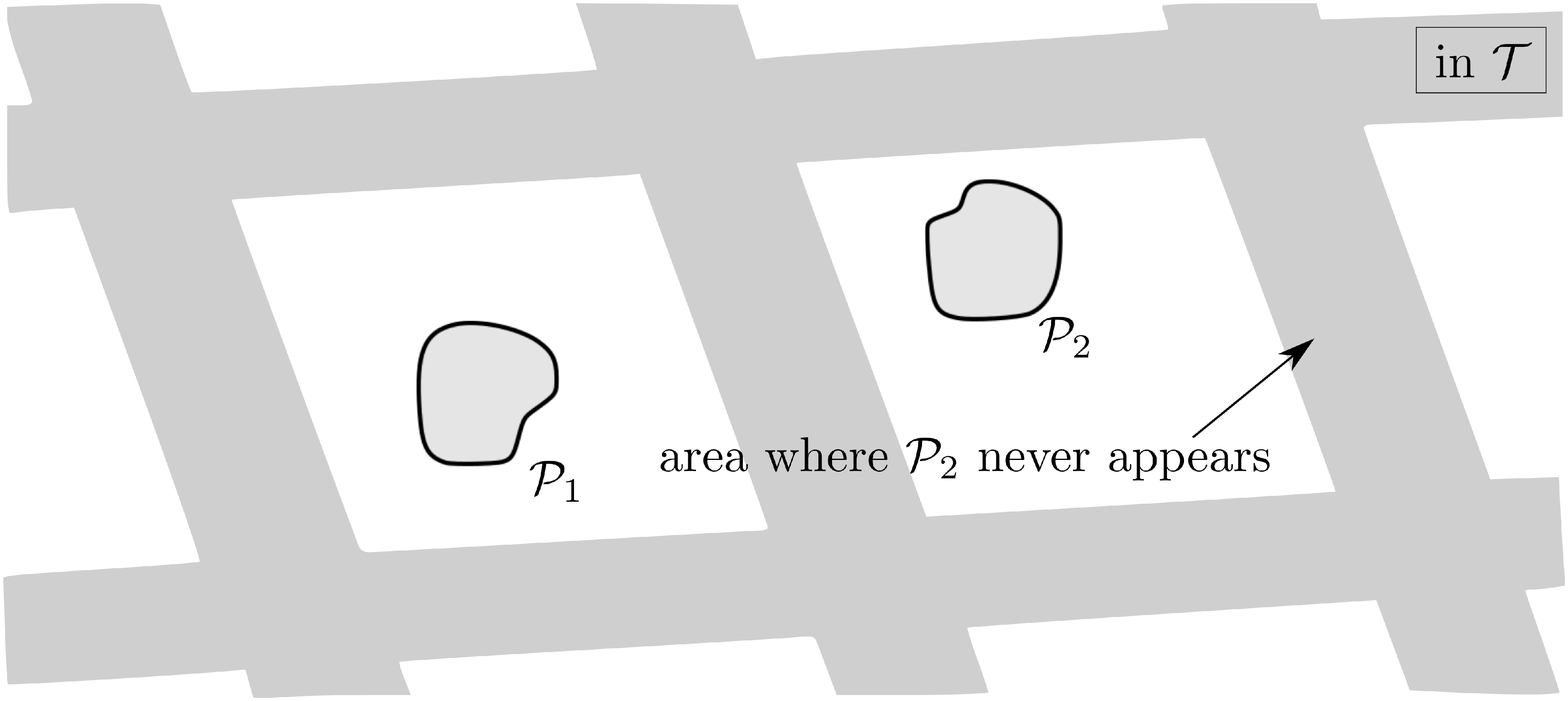}%
		\caption{Corollary \ref{cor_substi_main_thm1}}
		\label{figure_cor_thm1_substitution}
		\end{center}
		\end{figure}
		
	\begin{rem}
		The claim of Corollary \ref{cor_substi_main_thm1} is depicted in Figure \ref{figure_cor_thm1_substitution} in page \pageref{figure_cor_thm1_substitution}.
		It says that, the forbidden area is grid-like, the width of each band is $2R_0$ and the intervals of bands are approximately $8R_0$.
	\end{rem}
	Recall the definition of $\Pi_{\calT,\calA}$ (Definition \ref{def_language}).
	
	\begin{thm}\label{thm2_substitution}
		Let $(\mathcal{A}, \phi, \omega)$ be an iteratable, primitive and expanding pseudo-substitution of FLC
		 which has a repetitive, non-periodic and FLC fixed point $\mathcal{T}$.
		Assume $\phi$ is diagonalizable over $\mathbb{C}$ and all the eigenvalues are algebraic conjugates of the same multiplicity.
		Also assume $\spec(\phi)$ is a Pisot family.
		Then for any $R_0>0$, $\e>0$ and a finite subset $F\subset B(0,\frac{1}{8R_0})$,
		 there are $m\in\mathbb{Z}_{>0}$, a finite subset $F'$ of $\mathbb{R}^d\setminus\{0\}$ and 
		$x_a\colon \Pi_{\calT,\calA}\rightarrow \mathbb{R}^d$ for each $a\in F'$ such that the following two conditions hold:
		\begin{itemize}
			\item For each $a\in F'$, $\calP_1,\calP_2\in\Pi_{\calT,\calA}, y\in\Ker\chi_{a}$ and $z\in B(0,R_0)$,
				\begin{align*}
					 (\calP_1+x_a(\calP_1))\cup(\calP_2+x_a(\calP_2)+\phi^{-m}(\frac{1}{2\|a\|^2}a+y+z))
				\end{align*}
				is \emph{not} $\calT$-legal.
			\item $F$ and $F'$ are $\e$-close.
		\end{itemize}
	\end{thm}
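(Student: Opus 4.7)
The plan is to deduce Theorem~\ref{thm2_substitution} from Theorem~\ref{thm1_substitution} by exploiting the self-similarity $\omega^m(\calT)=\calT$: every $\calT$-legal patch with a proto-tile, when inflated by $\omega^m$, contains a translate of a patch in $\bigcup_{L>R}\Pi_{\calT,L,\calA}$ to which Theorem~\ref{thm1_substitution} applies, and the forbidden region prescribed there at the inflated scale becomes $\phi^{-m}$ of itself at the original scale.

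First I apply Theorem~\ref{thm1_substitution} with the given $R_0,\e,F$ to obtain $R>0$, a finite $F'\subset\Rd\setminus\{0\}$ that is $\e$-close to $F$, and maps $X_a\colon\bigcup_{L>R}\Pi_{\calT,L,\calA}\to\Rd$ for $a\in F'$; this $F'$ will be used in the conclusion. Next, I fix $L>R$ and combine primitivity with the expanding condition of Definition~\ref{def_several_conditions_for_pseudo_substi} to select a single $m\in\Zpo$ such that for every $P\in\calA$ the patch $\omega^m(P)$ contains a translate $P_*+v_P$ of the distinguished proto-tile $P_*$ from the expanding condition with
\begin{align*}
B(v_P,L+r)\subset\supp\omega^m(P),
\end{align*}
where $r=\max_{P'\in\calA}\diam P'$. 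Primitivity places a translate of $P_*+x_*$ inside $\omega^K(P)$ for some $K$ uniform in $P\in\calA$, and iteration of $\omega^n$ about that tile inflates, by the expanding property, an arbitrarily large ball inside the support.

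For each $\calP\in\Pi_{\calT,\calA}$ I pick $P\in\calP\cap\calA$ and $s\in\Rd$ with $\calP+s\subset\calT$, set $v_\calP:=v_P$, and define
\begin{align*}
\tilde{\calP}:=(\calT-v_\calP-\phi^m(s))\cap B(0,L),\qquad x_a(\calP):=\phi^{-m}\bigl(X_a(\tilde{\calP})-v_\calP\bigr).
\end{align*}
Since $P_*+v_\calP\in\omega^m(P)\subset\omega^m(\calP)$ and $\omega^m(\calP)+\phi^m(s)\subset\omega^m(\calT)=\calT$, we have $\tilde{\calP}\in\Pi_{\calT,L,\calA}$; moreover every tile of $\calT-\phi^m(s)$ fitting inside $B(v_\calP,L)$ is contained in $\supp\omega^m(\calP)$ by the ball containment and must coincide with a tile of $\omega^m(\calP)$ since tile interiors are disjoint, so $\tilde{\calP}+v_\calP\subset\omega^m(\calP)$.

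Finally, if for some $a,\calP_1,\calP_2,y,z$ as in the statement the patch $(\calP_1+x_a(\calP_1))\cup(\calP_2+x_a(\calP_2)+\phi^{-m}(\tfrac{1}{2\|a\|^2}a+y+z))$ were $\calT$-legal, applying $\omega^m$ (which preserves $\calT$-legality because $\omega^m(\calT)=\calT$) and substituting $\phi^m(x_a(\calP_i))=X_a(\tilde{\calP}_i)-v_{\calP_i}$ shows that
\begin{align*}
(\omega^m(\calP_1)+X_a(\tilde{\calP}_1)-v_{\calP_1})\cup(\omega^m(\calP_2)+X_a(\tilde{\calP}_2)-v_{\calP_2}+\tfrac{1}{2\|a\|^2}a+y+z)
\end{align*}
is $\calT$-legal; passing to the sub-patch $(\tilde{\calP}_1+X_a(\tilde{\calP}_1))\cup(\tilde{\calP}_2+X_a(\tilde{\calP}_2)+\tfrac{1}{2\|a\|^2}a+y+z)$ via $\tilde{\calP}_i+v_{\calP_i}\subset\omega^m(\calP_i)$ contradicts Theorem~\ref{thm1_substitution}. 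The main obstacle is the uniform choice of $m$: one must fuse primitivity with the expanding condition quantitatively and handle the boundary between $\omega^m(\calP)$ and $\calT-\phi^m(s)$ carefully to secure the containment $\tilde{\calP}+v_\calP\subset\omega^m(\calP)$ on which the contradiction hinges.
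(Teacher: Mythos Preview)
The proposal is correct and takes essentially the same approach as the paper: both apply Theorem~\ref{thm1_substitution}, choose $m$ large enough that $\omega^m$ of each proto-tile contains a translate of a patch from $\bigcup_{L>R}\Pi_{\calT,L,\calA}$, pull back via $\phi^{-m}$ to define $x_a$, and obtain a contradiction by applying $\omega^m$ and passing to the corresponding sub-patch. Your version supplies more detail on the uniform choice of $m$ (via primitivity together with the expanding condition) where the paper simply asserts its existence; note also that your $\tilde{\calP}+v_\calP=\omega^m(P)\cap B(v_P,L)$ actually depends only on the proto-tile $P\in\calP\cap\calA$, so your construction reduces to the paper's per-proto-tile definition $\calP_T$, $y_T$.
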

	\begin{proof}
		For any $R_0, \e, F$,  there are $R>0$,  $F'$ and $x'_a$ as in Theorem \ref{thm1_substitution}. 
		Take $m\in \mathbb{Z}_{>0}$ such that for any $T\in\mathcal{A}$, there are $\calP_{T}\in\bigcup_{L>R}\Pi_{\calT,L,\calA}$ and
		$y_T\in\mathbb{R}^d$ such that $\omega^m(\{T\})\supset \calP_T+y_T$.
		Set $x_a(T)=\phi^{-m}(x'_a(\calP_T)-y_T)$.
		
		Take $a\in F'$, $T,T'\in\mathcal{A}$, $y\in\Ker\chi_a$ and $z\in B(0,R_0)$. Suppose a patch
		\begin{align*}
			\mathcal{Q}_{T,T',y,z,a}=(\{T\}+x_a(T))\cup(\{T'\}+x_a(T')+\phi^{-m}(\frac{1}{2\|a\|^2}a+y+z))
		\end{align*}
		is $\calT$-legal. Then
		\begin{align*}
			(\omega^m(\{T\})+\phi^m (x_a(T)))\cup(\omega^m(\{T'\})+\phi^m (x_a(T'))+\frac{1}{2\|a\|^2}a+y+z)
		\end{align*}
		is also $\calT$-legal. It follows that
		\begin{align*}
			(\calP_T+x'_a(\calP_T))\cup(\calP_{T'}+x'_a(\calP_{T'})+\frac{1}{2\|a\|^2}a+y+z)
		\end{align*}
		is also $\calT$-legal, which is a contradiction. Hence $\mathcal{Q}_{T,T',y,z,a}$ is not $\calT$-legal. For $\calP\in\Pi_{\calT,\calA}$ there is 
		$T(\calP)\in\calP\cap\mathcal{A}$. Set $x_a(\calP)=x_a(T(\calP))$. For any $a\in F', 
		\calP_1, \calP_2\in\Pi_{\calT,\calA}, y\in\Ker\chi_a$ and $z\in B(0,R_0)$, the patch
		\begin{align*}
			(\calP_1+x_a(\calP_1))\cup(\calP_2+x_a(\calP_2)+\phi^{-m}(\frac{1}{2\|a\|}a+y+z))
		\end{align*}
		is not $\calT$-legal because it includes $\mathcal{Q}_{T(\calP_1),T(\calP_2),y,z,a}$.
	\end{proof}

	\begin{rem}
		In Theorem \ref{thm2_substitution}, each ``bands'' of the area where $\calP_2$ 
		never appears may be thin. However in this theorem we can see the relative relation
		of any two patches ($\calP_1$ and $\calP_2$) no matter how small they are (see Figure \ref{figure_introduction} in page \pageref{figure_introduction}). 
		Note that in Theorem \ref{thm1_substitution} the patches we deal with must be large enough.
	\end{rem}

	\begin{thm}\label{thm3_substitution}
		Let $\calT$ be a non-periodic self-affine tiling with expansion map $\phi$.
		Assume $\phi$ is diagonalizable over $\mathbb{C}$, all the eigenvalues are algebraic conjugates of the same multiplicity,
		and $\spec(\phi)$ forms a Pisot family.
		Then for any bounded neighborhood $U$ of $0$ in $\mathbb{R}^d$ which is
		large enough in the sense that $U\supset -\overline{P}$ for all proto-tile $P$, there are $R>0$ and $a\neq 0$ such that
		$\Pi_{\mathcal{T},R,\calA}(\mathcal{P}, x+U+\Ker\chi_a)\neq\emptyset$ for any $\mathcal{P}\in\Pi_{\mathcal{T},R,\calA}$ and $x\in\mathbb{R}^d$.
	\end{thm}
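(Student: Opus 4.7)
The plan is to reduce Theorem \ref{thm3_substitution} to Theorem \ref{main_thm2} by verifying its sole nontrivial hypothesis, namely that $0$ is a limit point of the set of topological eigenvalues of $(X_{\calT},\Rd)$. Once this is in hand, the tiling $\calT$ is already repetitive, FLC, and of finite tile type by the definition of a self-affine tiling, so Theorem \ref{main_thm2} applies directly to the large neighborhood $U$ and produces the required $R > 0$ and $a \neq 0$.

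The core of the argument is producing arbitrarily small nonzero topological eigenvalues. Since $\calT$ is self-affine, it is a fixed point of some primitive substitution rule $(\calA,\phi,\omega)$ of FLC. Non-periodicity of $\calT$ together with Theorem \ref{thm_recognizability} guarantees that $\omega\colon X_{\omega}\to X_{\omega}$ is injective. Under the standing assumptions that $\phi$ is diagonalizable over $\mathbb{C}$, that all its eigenvalues are algebraic conjugates of the same multiplicity, and that $\spec(\phi)$ is a Pisot family, all hypotheses of Theorem \ref{thm_lee_solomyak} are satisfied, so condition (3) of that theorem holds. Applying (3) to any nonempty finite $F \subset \Rd$ and any $\e > 0$, I obtain a finite $F' \subset \Rd \setminus \{0\}$ and $n \in \Zpo$ such that every element of $\sum_{x \in F'} \mathbb{Z}[(\phi^*)^{-n}]x$ is a topological eigenvalue for $(X_{\calT},\Rd)$.

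Now fix any $y \in F'$ and consider the sequence $\bigl((\phi^*)^{-nk}y\bigr)_{k \geq 1}$. Each term lies in the above $\mathbb{Z}$-module, hence is a topological eigenvalue; each term is nonzero since $\phi^*$ is invertible and $y \neq 0$; and since $\phi$ is expansive every eigenvalue of $\phi^*$ has modulus strictly greater than $1$, which forces $\|(\phi^*)^{-nk}y\| \to 0$ as $k \to \infty$. This exhibits $0$ as a limit point of the topological eigenvalues, completing the verification and allowing the invocation of Theorem \ref{main_thm2}.

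I do not anticipate a genuine obstacle: all the heavy lifting is done by Theorem \ref{thm_lee_solomyak}, Theorem \ref{thm_recognizability}, and Theorem \ref{main_thm2}, and the present theorem is essentially a packaged application. The only point requiring care is ensuring that the substitution one extracts from the self-affine structure is primitive (so that Theorem \ref{thm_lee_solomyak} applies) — in the framework of this paper this is standard, but it may warrant an explicit line in the write-up if the definition of self-affine tiling is to be used without tacit primitivity.
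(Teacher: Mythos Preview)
Your proposal is correct and follows essentially the same approach as the paper: the paper's own proof is the single line ``Such tilings satisfy the assumption of Theorem \ref{main_thm2} by Theorem \ref{thm_lee_solomyak},'' and you have simply unpacked this by invoking Theorem \ref{thm_recognizability} to obtain injectivity of $\omega$ (needed for condition (3) of Theorem \ref{thm_lee_solomyak}) and then exhibiting the sequence $(\phi^*)^{-nk}y$ of nonzero eigenvalues tending to $0$. Your closing remark about primitivity is a fair caveat, since the paper's definition of self-affine tiling does not explicitly require it while Theorem \ref{thm_lee_solomyak} does.
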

	\begin{proof}
	Such tilings satisfy the assumption of Theorem \ref{main_thm2} by  Theorem \ref{thm_lee_solomyak}.
	\end{proof}

	\begin{rem}
		We can apply Theorem \ref{thm1_substitution}, Corollary \ref{cor_substi_main_thm1} and
		Theorem \ref{thm2_substitution} 
		to many examples of pseudo-substitutions and pseudo-self-affine tilings. 
		We do not need to assume that the tilings are MLD with model sets, although if Pisot conjecture is true,
		then all the irreducible Pisot substitutions give rise to tilings which are MLD with model sets.
		Examples  for our results include the one in Example \ref{ex_penrose}, chair substitution, 
		``Penrose kite and dart'' and Ammann-Beenker substitution.
	\end{rem}

\section{Sequences with Weakly Mixing Dynamical Systems}\label{section_sequence_weak_mixing}

	Here we briefly mention that for sequences with weakly mixing dynamical systems, the situation is contrary to the one in the
	previous sections. 
	
	First we briefly recall word substitution. For details see \cite{MR2590264}.
	Note that we define a subshift as a subset of $\mathcal{A}^{\mathbb{Z}}$, not of $\mathcal{A}^{\Znn}$, but this change does not give an immense effect.
	\begin{defi}\label{def_language_for_sequences}
		Let $\mathcal{A}$ be a finite set. For $x\in\mathcal{A}^{\mathbb{Z}}$,
		define its language by 
		\begin{align*}
		\mathcal{L}_x=\{x_nx_{n+1}\cdots x_{n+m}\mid n\in\mathbb{Z}, m\in\Znn\}.
		\end{align*}
		To $\mathcal{A}^{\mathbb{Z}}$, the product topology is given.
		For $x$, its orbit closure is defined by $\overline{\mathcal{O}(x)}=\overline{\{T^n(x)\mid n\in\mathbb{Z}\}}$ where $T$ is the left-shift: $T((x_n))=(y_n)$,
		$x_{n+1}=y_n$.
	\end{defi}
	
	\begin{defi}
		For a finite set $\mathcal{A}$, a map $\zeta\colon\mathcal{A}\rightarrow \mathcal{A}^+=\bigcup_{k>0}\mathcal{A}^k$ is called a word substitution.
		Its language is defined by 
		\begin{align*}
		\mathcal{L}_{\zeta}=\{\text{$W$:word}\mid \text{$W$: a factor of a $\zeta^n(a)$ for $n>0$ and $a\in\mathcal{A}$}\}.
		\end{align*}
		Set $X_{\zeta}=\{x\in\mathcal{A}^{\mathbb{Z}}\mid \mathcal{L}_x\subset\mathcal{L}_{\zeta}\}$ and 
		for a word $W$, define its cylinder set by $[W]=\{x\in X_{\zeta}\mid x_0x_1\cdots x_{|W|-1}=W\}$.
	\end{defi}
	
	\begin{rem}	
	 It is known that if $\zeta$ satisfies the condition called primitivity,
	$(X_{\zeta},T)$ is minimal and  has a unique invariant probability
	measure $\mu$.
	By Dekking-Keane \cite{MR0466485} we see there is a primitive word substitution such that $(X_{\zeta},T,\mu)$ is weakly mixing.
	In \cite{MR0466485} $X_{\zeta}$ was defined as a subset of $\mathcal{A}^{\Znn}$, 
	but the property that the dynamical system is weakly mixing is equivalent between these two definitions.
	\end{rem}

	
	
	Recall a subset $J\subset \Znn$ is said to have density zero if $\limsup_{N\rightarrow\infty}\frac{\card J\cap [0,N]}{N+1}=0$.
	\begin{prop}\label{prop1_sequence_weak_mixing}
		Let $x\in\mathcal{A}^{\mathbb{Z}}$ be a sequence such that the corresponding dynamical system $(\overline{\mathcal{O}(x)},T)$ has an invariant 
		probability measure
		$\mu$ with $\supp\mu=\overline{\mathcal{O}(x)}$ and $(\overline{\mathcal{O}(x)}, T,\mu)$ is weakly mixing.
		Then there is a subset $J(W_1,W_2)\subset\Znn$ of density zero for each $W_1,W_2\in\mathcal{L}_{x}$ 
		such that for any $W_1,W_2\in\mathcal{L}_{x}$ and
		$n\in\Znn\setminus J(W_1,W_2)$, $[W_1]\cap T^{-n}[W_2]\neq\emptyset$.
	\end{prop}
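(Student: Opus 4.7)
The plan is to apply the classical Koopman--von Neumann characterization of weak mixing to the cylinder sets $[W_1]$ and $[W_2]$. Recall that $(\overline{\mathcal{O}(x)}, T, \mu)$ being weakly mixing is equivalent to the statement that for every pair of measurable sets $A, B$,
\begin{align*}
	\lim_{N\to\infty}\frac{1}{N}\sum_{n=0}^{N-1}\bigl|\mu(A\cap T^{-n}B)-\mu(A)\mu(B)\bigr|=0.
\end{align*}
The Koopman--von Neumann lemma then says that a bounded sequence $(a_n)$ satisfies $\frac{1}{N}\sum_{n=0}^{N-1}|a_n|\to 0$ if and only if there is a set $J\subset\Znn$ of density zero such that $\lim_{n\to\infty,\,n\notin J}a_n=0$. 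Applied to $a_n=\mu([W_1]\cap T^{-n}[W_2])-\mu([W_1])\mu([W_2])$, this furnishes a density-zero set $J'(W_1,W_2)\subset\Znn$ along whose complement $\mu([W_1]\cap T^{-n}[W_2])$ converges to $\mu([W_1])\mu([W_2])$.

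Next I would argue that $\mu([W_1])\mu([W_2])>0$. Since $W_i\in\mathcal{L}_x$, the word $W_i$ occurs in $x$, so some iterate $T^{k}(x)$ lies in $[W_i]$; therefore $[W_i]$ is a nonempty relatively open subset of $\overline{\mathcal{O}(x)}$. The hypothesis $\supp\mu=\overline{\mathcal{O}(x)}$ then forces $\mu([W_i])>0$. Consequently, along $\Znn\setminus J'(W_1,W_2)$, the measure $\mu([W_1]\cap T^{-n}[W_2])$ is eventually bounded below by a positive constant, and in particular the intersection is nonempty for all sufficiently large $n\notin J'(W_1,W_2)$.

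Finally, to promote ``for $n$ sufficiently large and outside $J'$'' to ``for every $n$ outside some density-zero set,'' I would absorb the finitely many small indices for which the intersection is empty. Explicitly, let $N_0$ be such that $[W_1]\cap T^{-n}[W_2]\neq\emptyset$ for all $n\geq N_0$ with $n\notin J'(W_1,W_2)$, and set
\begin{align*}
	J(W_1,W_2)=J'(W_1,W_2)\cup\{n<N_0\mid [W_1]\cap T^{-n}[W_2]=\emptyset\}.
\end{align*}
Adjoining finitely many integers preserves density zero, and by construction $[W_1]\cap T^{-n}[W_2]\neq\emptyset$ whenever $n\in\Znn\setminus J(W_1,W_2)$, as required.

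There is no substantial obstacle in the argument; the main points to verify carefully are (i) that one really obtains a density-zero set of exceptional indices from the Cesàro-averaged estimate (the Koopman--von Neumann step), and (ii) that the hypothesis $\supp\mu=\overline{\mathcal{O}(x)}$ is used in an essential way to ensure $\mu([W_i])>0$, so that the positive asymptotic value $\mu([W_1])\mu([W_2])$ translates into nonemptiness of the intersection for $n$ outside the exceptional set.
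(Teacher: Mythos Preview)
Your argument is correct and is precisely the standard unpacking of what the paper means by ``an application of one of characterizations of weakly mixing'': the paper's own proof consists of a single sentence invoking this characterization, and your Koopman--von~Neumann step together with the use of $\supp\mu=\overline{\mathcal{O}(x)}$ to get $\mu([W_i])>0$ is exactly the intended content.
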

	
	\begin{proof}
		This is just an application of one of characterizations of weakly mixing.
	\end{proof}
	\begin{rem}\label{rem_after_prop1_sequence}
		This proposition says that the relative position of words is ``freer''  than in the situation of Theorem \ref{main_thm1}.
		It says that if we observe a copy of $W_1$ in $x$, we cannot rule out the possibility of appearance of $W_2$ 
		in a wide area (complement of $J(W_1, W_2)$)
		relative to this copy of $W_1$ .
		It can be said that the ``patch'' $W_1\cup(W_2+n)$ is ``legal'' for any $n\notin J(W_1,W_2)$ if the terms are appropriately defined.

		Proposition \ref{prop1_sequence_weak_mixing} is applicable to any element $x\in X_{\zeta}$ where $\zeta$ is a primitive word substitution 
		such that $(X_{\zeta}, T,\mu)$ is weakly mixing, $\mu$ being the unique invariant probability measure.
	\end{rem}
	
	\begin{defi}
		For a word substitution $\zeta$ and $m\in\Zpo$, set $\mathcal{L}_{\zeta,m}=\{W\in\mathcal{L}_{\zeta}\mid |W|=m\}. $
	\end{defi}
	
	\begin{prop}\label{prop2_sequence_weak_mixing}
		Let $x\in\mathcal{A}^{\mathbb{Z}}$ be a sequence such that the corresponding dynamical system $(\overline{\mathcal{O}(x)},T)$ has an invariant 
		probability measure
		$\mu$ with $\supp\mu=\overline{\mathcal{O}(x)}$ and $(\overline{\mathcal{O}(x)}, T,\mu)$ is weakly mixing.
		Let $J(W_1,W_2)$ be the subset of $\Znn$ described in
		Proposition \ref{prop1_sequence_weak_mixing}. Set $J(W)=\bigcup_{W'\in\mathcal{L}_{\zeta,|W|}}J(W,W')$.
		If $m\in\Zpo, W_1,W_2\in\mathcal{L}_{\zeta,m}$ and $n\in\Znn\setminus J(W_1)$, 
		$[W_1]\cap T^{-n}[W_2]\neq\emptyset.$
	\end{prop}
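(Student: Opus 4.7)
The plan is to derive this proposition directly from Proposition \ref{prop1_sequence_weak_mixing} by unpacking the definition of $J(W)$. Fix $m\in\Zpo$ and $W_1, W_2\in\mathcal{L}_{\zeta,m}$, and suppose $n\in\Znn\setminus J(W_1)$. Since by definition $J(W_1)=\bigcup_{W'\in\mathcal{L}_{\zeta,m}}J(W_1,W')$, the assumption $n\notin J(W_1)$ immediately implies $n\notin J(W_1,W_2)$ in particular. An application of Proposition \ref{prop1_sequence_weak_mixing} to the pair $(W_1,W_2)$ and the index $n$ then yields $[W_1]\cap T^{-n}[W_2]\neq\emptyset$, which is the desired conclusion.

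Although the statement does not explicitly claim it, the natural accompanying observation is that $J(W_1)$ is itself of density zero; this is the point of the proposition, since it says that a single ``bad set'' $J(W_1)$, depending only on $W_1$, controls the positions where \emph{some} word of length $|W_1|$ may fail to appear after a copy of $W_1$. The justification is that the alphabet $\mathcal{A}$ is finite, hence $\mathcal{L}_{\zeta,m}\subset\mathcal{A}^m$ is finite, so $J(W_1)$ is a finite union of the density-zero sets $J(W_1,W')$ furnished by Proposition \ref{prop1_sequence_weak_mixing}. Density zero is stable under finite unions because for any finite collection of subsets $J_1,\dots,J_k\subset\Znn$,
\begin{align*}
\limsup_{N\to\infty}\frac{\card\bigl(\bigcup_i J_i\bigr)\cap[0,N]}{N+1}\leq\sum_{i=1}^k\limsup_{N\to\infty}\frac{\card J_i\cap[0,N]}{N+1}=0.
\end{align*}

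There is no real obstacle here: the entire content is a matter of unpacking the definition of $J(W_1)$ and then quoting Proposition \ref{prop1_sequence_weak_mixing}. The work has already been done in establishing Proposition \ref{prop1_sequence_weak_mixing} from weak mixing; this proposition is essentially a repackaging of that result in a form better suited for comparison with Theorem \ref{main_thm2}, where an analogous uniformity in the second patch is impossible for tilings with sufficiently many eigenvalues.
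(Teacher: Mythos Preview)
Your proof is correct and matches the paper's approach exactly: the paper's proof is simply ``Clear by Proposition \ref{prop1_sequence_weak_mixing},'' which is precisely the unpacking you carry out. Your additional observation that $J(W_1)$ has density zero is also correct and is noted by the paper in the remark immediately following the proposition.
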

	\begin{proof}
	Clear by Proposition \ref{prop1_sequence_weak_mixing}.
	\end{proof}

	\begin{rem}
		This proposition says that, for certain sequences, the situation on distribution of words is contrary to the one in Theorem \ref{main_thm2}.
		That is, given a word $W_1$ and $n\notin J(W_1)$, if we observe  $W_1$ in the sequence and move our attention by $n$ to the right,
		any word can be observed and no possibility can be ruled out.
		(Note that each $J(W)$ has density zero.)
		In Theorem \ref{main_thm2},  some possibility can be ruled out.
	\end{rem}
			
\textbf{Acknowledgements.} I thank Takeshi Katsura for helpful comments and discussions. I also thank Shigeki Akiyama for interesting comments and discussions.
	Finally I thank referees for comments by which the article was improved.


\bibliographystyle{amsplain}
\bibliography{tiling}

\end{document}